\newcommand\ts{\textsuperscript}
\newcommand\be{\begin{equation}}
\newcommand\ee{\end{equation}}
\newcommand\bea{\begin{eqnarray}}
\newcommand\eea{\end{eqnarray}}
\newcommand\bi{\begin{itemize}}
\newcommand\ei{\end{itemize}}
\newcommand\ben{\begin{enumerate}}
\newcommand\een{\end{enumerate}}
\newcommand\bc{\begin{center}}
\newcommand\ec{\end{center}}
\newcommand\ba{\begin{array}}
\newcommand\ea{\end{array}}
\newcommand{\R}{\ensuremath{\mathbb{R}}}
\newcommand{\N}{\mathbb{N}}
\newcommand{\E}{\ensuremath{\mathbb{E}}}
\newcommand{\p}{\ensuremath{\mathbb{P}}}
\newcommand{\f}{\mathcal{F}}
\newtheorem{thm}{Theorem}[section]
\newtheorem{conj}[thm]{Conjecture}
\newtheorem{cor}[thm]{Corollary}
\newtheorem{lem}[thm]{Lemma}
\newtheorem{defi}[thm]{Definition}
\newtheorem{rek}[thm]{Remark}
\newcommand{\ncr}[2]{{#1 \choose #2}}
\newcommand{\gep}{\epsilon}
\newcommand{\gl}{\lambda}
\newcommand{\ga}{\alpha}
\newcommand{\gb}{\beta}
\numberwithin{equation}{section}
\begin{document}

\title{Distribution of Eigenvalues of Highly Palindromic Toeplitz Matrices}

\author[Jackson]{Steven Jackson}\email{Steven.R.Jackson@williams.edu}
\address{Department of Mathematics and Statistics, Williams College, Williamstown, MA 01267}
\curraddr{Department of Physics, Princeton University, Princeton, NJ 08544}

\author[Miller]{Steven J. Miller}\email{Steven.J.Miller@williams.edu}
\address{Department of Mathematics and Statistics, Williams College, Williamstown, MA 01267}

\author[Pham]{Thuy Pham}\email{tvp1@williams.edu}
\address{Department of Mathematics and Statistics, Williams College, Williamstown, MA 01267}

\subjclass[2010]{15B52, 60F05, 11D45 (primary), 60F15, 60G57, 62E20 (secondary)}

\keywords{limiting spectral measure, Toeplitz matrices, random matrix theory, Diophantine equations, convergence, method of moments}

\date{\today}

\thanks{This work was done at the 2009 SMALL Undergraduate Research Project at Williams College, funded by NSF GRANT DMS-0850577 and Williams College; it is a pleasure to thank them and the other participants. The second named author was also partly supported by NSF grant DMS0600848 and DMS0970067.}

\begin{abstract}
Consider the ensemble of real symmetric Toeplitz matrices whose entries are i.i.d random variables chosen from a fixed probability distribution $p$ of mean 0, variance 1 and finite higher moments. Previous work \cite{BDJ,HM} showed that the limiting spectral measures (the density of normalized eigenvalues) converge in probability and almost surely to a universal distribution almost that of the Gaussian, independent of $p$. The deficit from the Gaussian distribution is due to obstructions to solutions of Diophantine equations and can be removed (see \cite{MMS}) by making the first row palindromic. In this paper, we study the case where there is more than one palindrome in the first row of a real symmetric Toeplitz matrix. Using the method of moments and an analysis of the resulting Diophantine equations, we show that the moments of this ensemble converge to an universal distribution with a fatter tail than any previously seen limiting spectral measure.
\end{abstract}

\maketitle

\tableofcontents


\section{Introduction}

\subsection{Background}

Since its inception, Random Matrix Theory has been a powerful tool in modeling highly complicated systems, with applications in statistics \cite{Wis}, nuclear physics \cite{Wig1, Wig2, Wig3, Wig4, Wig5} and number theory \cite{KS1, KS2, KeSn}; see \cite{FM} for a history of the development of some of these connections. An interesting problem in Random Matrix Theory is to study sub-ensembles of real symmetric matrices by introducing additional structure. One of those sub-ensembles is the family of real symmetric Toeplitz matrices; these matrices are constant along the diagonals: \be A_N \ = \ \left(\begin{array}{ccccc}
b_{0}  &  b_{1}  & b_{2}  & \cdots & b_{N-1} \\
b_{-1} &  b_{0}  & b_{1}  & \cdots & b_{N-2} \\
b_{-2} &  b_{-1} & b_{0}  & \cdots & b_{N-3} \\
\vdots & \vdots  & \vdots & \ddots & \vdots \\
b_{1-N} &  b_{2-N} & b_{3-N} & \cdots & b_0 \\
\end{array}\right), \ \ \ \ a_{ij} \ = \ b_{j-i}.
\ee

Initially numerical investigations suggested that the density of the normalized eigenvalues was given by the standard normal; however, Bose, Chatterjee, Gangopadhyay \cite{BCG},  Bryc, Dembo and Jiang \cite{BDJ} and Hammond and Miller \cite{HM} showed that this is not the case; in particular, the fourth moment is 2 2/3 and not 3. The analysis in \cite{HM} shows that although the moments grow more slowly than the Gaussian's, they grow sufficiently fast to determine a universal distribution with unbounded support. The deficit from the standard Gaussian's moments is due to obstructions to Diophantine equations.

In \cite{MMS}, Massey, Miller and Sinsheimer found that, by imposing additional structure on the Toeplitz matrices by making the first row a palindrome, the Diophantine obstructions vanish and the limiting spectral measure converges in probability and almost surely to the standard Gaussian.  A fascinating question to ask is how the behavior of the normalized eigenvalues changes if we impose other constraints. Basak and Bose \cite{BB1}, Kargin \cite{Kar} and Liu and Wang \cite{LW} obtain results for ensembles of Toeplitz (and other) matrices that are also band matrices, with the results depending on the relative size of the band length to the dimension of the matrices. Another direction is that of Basak and Bose \cite{BB2}, where each entry is scaled by the square root of the number of times that entry appears in the matrix. These ensembles are special cases of patterned matrices governed by a link function; see also \cite{BanBo,BHS}. In this paper we explore another generalization by studying the effect of increasing the palindromicity on the distribution of the eigenvalues. We begin by listing our notation below, and then stating our results in \S\ref{sec:results}.

\subsection{Notation}

\begin{defi}\label{def:descriptionhighlytoepensemble} For fixed $n$, we consider $N\times N$ real symmetric Toeplitz matrices in which the first row is $2^n$ copies of a palindrome. We always assume $N$ to be a multiple of  $2^n$ so that each element occurs exactly $2^{n+1}$ times in the first row. For instance, a doubly palindromic Toeplitz matrix (henceforth referred to as a DPT matrix) is of the form: \be\label{eq:defrsptmat} A_N\ =\ \left(
\begin{array}{cccccccccc}
b_0  &  b_1  &  \cdots  & b_1 & b_0 & b_0 & b_1 & \cdots  & b_1   & b_0  \\
b_1  &  b_0  &  \cdots  & b_2 & b_1 & b_0 & b_0 & \cdots  & b_2   & b_1  \\
b_2  &  b_1  &  \cdots  & b_3 & b_2 & b_1 & b_0 & \cdots  & b_3   & b_2  \\
\vdots  &  \vdots  &  \ddots  & \vdots & \vdots & \vdots & \vdots & \ddots  & \vdots   & \vdots  \\
b_2  &  b_3  &  \cdots  & b_0 & b_1 & b_2 & b_3 & \cdots  & b_1   & b_2  \\
b_1  &  b_2  &  \cdots  & b_0 & b_0 & b_1 & b_2 & \cdots  & b_0   & b_1  \\
b_0  &  b_1  &  \cdots  & b_1 & b_0 & b_0 & b_1 & \cdots  & b_1   & b_0  \\
\end{array}\right) \ee
We always assume the entries of our matrices are i.i.d.r.v. chosen from some distribution $p$ with mean 0, variance 1 and finite higher moments. The entries of the matrices are constant along diagonals. Furthermore, entries on two diagonals that are $N/2^n$ diagonals apart from each other are also equal. Finally, entries on two diagonals symmetric within a palindrome are also equal.
\end{defi}

To succinctly keep track of which elements are equal, we may introduce a \emph{link function} $\psi: \{1,\dots,N\}^2 \to \{1,\dots,N\}$ and new parameters $b_\ell$ such that $a_{ij} = b_{\psi (i,j)}$, where
\begin{equation}
\psi (i,j) \ = \
\begin{cases}
\ \ |i - j| \mod 2^n   &\ {\rm if}\  \ \ |i - j| \mod 2^n < N / 2^{n+1}  \\
- |i - j| \mod 2^n & \ {\rm if}\ \ \ |i - j| \mod 2^n  > N / 2^{n+1}. \\
\end{cases}
\end{equation}
Each $N \times N$ matrix $A_N$ in this ensemble can be identified with a vector in $\mathbb R^{N/2^n}$ by $A_N \leftrightarrow (b_0(A_N), b_1(A_N), \dots, b_{N/2^n}(A_N))$.


Before stating our results, we recall the various types of convergence. The following passage is paraphrased from \cite{MMS} with permission. Fix $n$ and for each integer $N$ let $\Omega_{n,N}$ denote the set of $N \times N$
real symmetric Toeplitz matrices whose first row is $2^n$ copies of a palindrome. We construct a
probability space $(\Omega_{n,N},\f_{n,N},\p_{n,N})$ by setting \bea & &
\p_{n,N}\left(\left\{A_N\in\Omega_{n,N}: b_{i}(A_N) \in [\ga_i, \gb_i]\
{\rm for}\ i \in \left\{0,\dots,N / 2^n - 1\right\}\right\}\right)
\nonumber\\ & & \ \ \ \ \ \ \ \ = \ \prod_{i=1}^M
\int_{x_i=\ga_i}^{\gb_i} p(x_i)dx_i, \eea where each $dx_i$ is
Lebesgue measure. To each $A_N\in \Omega_{n,N}$ we attach a spacing
measure by placing a point mass of size $1/N$ at each normalized
eigenvalue\footnote{From the eigenvalue trace lemma
($\text{Trace}(A_N^2) = \sum_i \gl_i^2(A_N)$) and the Central Limit
Theorem, we see that the eigenvalues of $A_N$ are of order
$\sqrt{N}$. This is because $\text{Trace}(A_N^2) = \sum_{i,j=1}^N
a_{ij}^2$, and since each $a_{ij}$ is drawn from a mean $0$,
variance $1$ distribution, $\text{Trace}(A_N^2)$ is of size $N^2$.
This suggests the appropriate scale for normalizing the eigenvalues
is to divide each by $\sqrt{N}$.} $\gl_i(A_N)$: \be\label{eq:normspacingmeasure}
\mu_{n, A_N}(x)dx \ = \ \frac{1}{N} \sum_{i=1}^N \delta\left( x -
\frac{\gl_i(A_N)}{\sqrt{N}} \right)dx, \ee where $\delta(x)$ is the
standard Dirac delta function. We call $\mu_{n, A_N}$ the normalized
spectral measure associated to $A_N$.

\begin{defi}[Normalized empirical spectral
distribution]\label{defi:nesd} Let $A_N \in \Omega_{n,N}$ have eigenvalues $\lambda_N \ge \cdots \ge
\lambda_1$. The normalized empirical spectral distribution (the
empirical distribution of normalized eigenvalues) $F_n^{A_N/\sqrt{N}}$
is defined by \be F_n^{A_N/\sqrt{N}}(x) \ = \frac{\#\{i \le N:
\lambda_i/\sqrt{N} \le x\}}{N}. \ee \end{defi}

As $F_n^{A_N/\sqrt{N}}(x) = \int_{-\infty}^x \mu_{n, A_N}(t)dt$, we see
that $F_n^{A_N/\sqrt{N}}$ is the cumulative distribution function
associated to the measure $\mu_{n, A_N}$. 

We are interested in the behavior of a typical $F_n^{A_N/\sqrt{N}}$ as we vary $A_N$ in our ensembles $\Omega_{n,N}$ as
$N\to\infty$. Our main results are that for each $n$, $F_n^{A_N/\sqrt{N}}$ converges
to the cumulative distribution function of a probability distribution.

As there is a
one-to-one correspondence between $N\times N$ real symmetric
palindromic Toeplitz matrices whose first row is $n$ copies of a palindrome and $\R^{N/2^n}$, we may study the more
convenient infinite sequences. Thus our outcome space is $\Omega_{n,\N}
= \{b_0,b_1,\dots\}$, and if $\omega = (\omega_0,\omega_1,\dots) \in
\Omega_{n,\N}$ then \be {\rm Prob}(\omega_i \in [\alpha_i,\beta_i])\ =\
\int_{\alpha_i}^{\beta_i} p(x_i)dx_i.\ee We denote elements of
$\Omega_{n,\N}$ by $A$ to emphasize the correspondence with matrices,
and we set $A_N$ to be the real symmetric palindromic Toeplitz
matrix obtained by truncating $A = (b_0,b_1,\dots)$ to
$(b_0,\dots,b_{N/2^n-1})$. We denote the probability space by
$(\Omega_{n,\N},\f_{n, \N},\p_{n, \N})$.

To each integer $m \ge 0$ we define the random variable $X_{m,n;N}$ on
$\Omega_{n,\N}$ by \be X_{m,n;N}(A) \ = \ \int_{-\infty}^\infty x^m
dF_n^{A_N/\sqrt{N}}(x); \ee note this is the $m$\textsuperscript{th}
moment of the measure $\mu_{n, A_N}$.

We investigate several types of convergence.

\ben

\item (Almost sure convergence) For each $m$, $X_{m,n;N} \to X_{m,n}$
almost surely if \be \p_{n, \N}\left(\{A \in \Omega_{n,\N}: X_{m,n;N}(A) \to
X_{m,n}(A)\ {\rm as}\ N\to\infty\}\right) \ =  \ 1; \ee

\item (In probability) For each $m$, $X_{m,n;N} \to X_{m,n}$ in
probability if for all $\gep>0$, \be
\lim_{N\to\infty}\p_{n, \N}(|X_{m,n;N}(A) - X_{m,n}(A)|
> \gep) \ = \ 0;\ee

\item (Weak convergence) For each $m$, $X_{m,n;N} \to X_{m,n}$ weakly if
\be \p_{n, \N}(X_{m,n;N}(A) \le x) \ \to \ \p(X_{m,n}(A) \le x) \ee as
$N\to\infty$ for all $x$ at which $F_{X_{m,n}}(x) = \p(X_{m,n}(A) \le x)$ is
continuous.

\een

Alternate notations are to say \emph{with probability 1} for almost
sure convergence and \emph{in distribution} for weak convergence;
both almost sure convergence and convergence in probability imply
weak convergence. For our purposes we take $X_{m,n}$ as the random
variable which is identically $M_{m,n}$, the  limit of the average $m$\textsuperscript{th} moments (i.e., $\lim_{N\to\infty} M_{m,n;N}$), which we show below uniquely determine a probability distribution.

Our main tool to understand the $F_n^{A_N/\sqrt{N}}$ is the Moment
Convergence Theorem (see \cite{Ta} for example); while the analysis in \cite{MMS} was simplified by the fact that the convergence was to the standard normal, similar arguments hold in our case as the growth rate of the moments of our limiting distribution implies that the moments uniquely determine a probability distribution.

\begin{thm}[Moment Convergence Theorem]\label{thm:momct} Let $\{F_N(x)\}$ be a
sequence of distribution functions such that the moments \be M_{m;N}
\ = \ \int_{-\infty}^\infty x^m dF_N(x) \ee exist for all $m$. Let $\{M_m\}_{m=1}^\infty$ be a sequence of moments that uniquely determine a probability distribution, and denote the cumulative distribution function by $\Psi$. If
$\lim_{N\to\infty} M_{m,N} = M_m$ then $\lim_{N\to\infty} F_N(x) =
\Psi(x)$. \end{thm}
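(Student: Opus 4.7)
The plan is to follow the classical two-step approach: establish tightness of $\{F_N\}$, and then identify every subsequential weak limit as $\Psi$. First, I would use Markov's inequality with the second moments: for any $K > 0$,
\[
\int_{|x|>K} dF_N(x) \;\leq\; \frac{M_{2;N}}{K^2},
\]
and since $M_{2;N} \to M_2 < \infty$ the second moments are uniformly bounded in $N$, which yields tightness of the family $\{F_N\}$.

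By Helly's selection theorem (equivalently Prokhorov), every subsequence of $\{F_N\}$ admits a further subsequence $\{F_{N_k}\}$ converging weakly to some probability distribution $\tilde F$. The main technical step is to show that $\tilde F$ has $m$th moment $M_m$ for every $m$. Weak convergence does not by itself preserve moments, so I would establish uniform integrability of $x \mapsto x^m$ with respect to $\{F_N\}$ via
\[
\int_{|x|>K} |x|^m \, dF_N(x) \;\leq\; \frac{1}{K^m}\int |x|^{2m}\, dF_N(x) \;=\; \frac{M_{2m;N}}{K^m},
\]
where the right-hand side tends to $0$ uniformly in $N$ because $M_{2m;N} \to M_{2m} < \infty$. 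Combining this tail control with the weak convergence of $F_{N_k}$ on bounded sets gives $\int x^m\, d\tilde F = M_m$ for all $m$.

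By hypothesis, the sequence $\{M_m\}$ determines a unique probability distribution, so $\tilde F = \Psi$. Because every weakly convergent subsequence of $\{F_N\}$ has the same limit $\Psi$, a standard subsequence argument (if $F_N$ did not converge weakly to $\Psi$, we could extract a subsequence staying a fixed distance away from $\Psi$, contradicting tightness and the above identification) shows $F_N \to \Psi$ weakly, hence $F_N(x) \to \Psi(x)$ at every continuity point of $\Psi$; in the applications of this paper $\Psi$ will be continuous, so convergence holds at every $x$.

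The main obstacle is the uniform-integrability step used to push moments through the weak limit: weak convergence preserves integrals of bounded continuous functions, not of polynomials, so one must convert convergence of the doubled moments $M_{2m;N}$ into a quantitative tail bound before identifying $\int x^m\, d\tilde F$. Fortunately, the hypothesis furnishes every moment, so the bound above suffices and the rest of the proof is a packaging of Helly plus the assumed uniqueness of the moment problem.
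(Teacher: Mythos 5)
The paper does not actually supply a proof of this theorem: it is stated as a tool and attributed to the reference \cite{Ta} (Tak\'acs, \emph{A Moment Convergence Theorem}, Amer.\ Math.\ Monthly 1991), so there is no ``paper's own proof'' to compare against. Your argument is the standard textbook proof of the Fr\'echet--Shohat / moment convergence theorem and it is essentially correct: tightness via the uniformly bounded second moments, Helly (or Prokhorov) to extract weakly convergent subsequential limits, uniform integrability of $|x|^m$ via the bound $\int_{|x|>K}|x|^m\,dF_N \le K^{-m}M_{2m;N}$ (using that convergent sequences of reals are bounded, so $\sup_N M_{2m;N}<\infty$), and then the assumed determinacy of the moment problem to identify every subsequential limit with $\Psi$, after which the standard ``every subsequence has a further subsequence converging to $\Psi$'' argument upgrades subsequential convergence to convergence of the full sequence. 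You also correctly flag that weak convergence a priori yields $F_N(x)\to\Psi(x)$ only at continuity points of $\Psi$ (the theorem as stated in the paper is slightly loose on this point), and that for the spectral measures in this paper $\Psi$ is continuous so the distinction evaporates. The one place you could tighten the write-up is the identification step: after establishing uniform integrability you should spell out why $\int x^m\,d\tilde F$ is finite and equals $\lim_k M_{m;N_k}$ (e.g.\ by truncating $x^m$ to a bounded continuous $\phi_K$, using weak convergence for $\phi_K$, and letting $K\to\infty$ using both the uniform tail bound for the $F_{N_k}$ and a Fatou-type bound for the tail of $\tilde F$), but this is a routine detail and does not indicate a gap in the plan.
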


\begin{defi}[Limiting spectral distribution]\label{defi:lsd}
If as $N\to\infty$ we have $F_n^{A_N/\sqrt{N}}$ converges in some
sense (for example, in probability or almost surely) to a distribution $F_n$,
then we say $F_n$ is the limiting spectral distribution of the
ensemble.
\end{defi}

\subsection{Results}\label{sec:results}

Our main result concerns the limiting behavior (as a function of the palindromicity $n$) of the $\mu_{n, A_N}$ for generic $A_N$ as $N\to\infty$. We analyze these limits using the method of moments. Specifically, for each $A_N$ we calculate the moments of $\mu_{n, A_N}$ by using the Eigenvalue Trace Lemma to relate the $m$\textsuperscript{th} moment to the trace of $A_N^m$. We show the average $m$\textsuperscript{th} moment tends to the $m$\textsuperscript{th} moment of a distribution with unbounded support. By analyzing the rate of convergence, we obtain results on convergence in probability and almost sure convergence, which we recall below.

Specifically, our main result is the following.

\begin{thm}\label{thm:mainweakstrong} {\rm \textbf{(Convergence in Probability and Strong Convergence)}} Let $n$ be a fixed positive integer, and for each $N$ a multiple of $2^n$ consider the ensemble of real symmetric $N\times N$ palindromic Toeplitz matrices whose first row is $2^n$ copies of a fixed palindrome (see \eqref{def:descriptionhighlytoepensemble} for an example), where the independent entries are independent, identically distributed random variables arising from a probability distribution $p$ with mean 0, variance 1 and finite higher moments. Then as $N\to\infty$ the measures $\mu_{n, A_N}$ (defined in \eqref{eq:normspacingmeasure}) converge weakly to a limiting spectral measure with unbounded support. If additionally $p$ is even, then the measures converge strongly to the limiting spectral measure. \end{thm}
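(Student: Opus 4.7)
The plan is to apply the method of moments in the spirit of \cite{HM, MMS}. For each fixed even $m$, I would use the Eigenvalue Trace Lemma to expand
\[
\E[X_{m,n;N}] \ = \ \frac{1}{N^{1+m/2}} \sum_{1 \le i_1,\dots,i_m \le N} \E\!\left[b_{\psi(i_1,i_2)}\, b_{\psi(i_2,i_3)} \cdots b_{\psi(i_m,i_1)}\right].
\]
Since $p$ has mean $0$, only tuples in which every distinct label among the $b$'s appears at least twice can survive. A standard power-counting argument then reduces the leading contribution to pair matchings of the $m$ factors: odd $m$ yields $M_{m,n}=0$, while higher-multiplicity groupings contribute $O(1/N)$ corrections that vanish in the limit.

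For each pair matching $\pi$ of $\{1,\dots,m\}$, the surviving tuples are solutions of the Diophantine system $\psi(i_k,i_{k+1})=\psi(i_{\pi(k)},i_{\pi(k)+1})$. Under the highly palindromic link function, each such equation determines $i_k-i_{k+1}$ only up to a sign (from the palindromic fold) and a shift by an integer multiple of $N/2^{n+1}$ (from the $2^n$ repetitions across the row), giving essentially $2^{n+1}$ effective choices per matched pair. Setting $d_k := i_k - i_{k+1}$, I would pick $i_1$ freely ($N$ choices) and then independently realize the $m/2$ signed differences (one per matched pair, each of order $N$), for a total of $\sim N^{m/2+1}$ solutions weighted by a purely combinatorial factor $w_n(\pi)$ encoding the sign and shift choices. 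Normalizing and summing produces
\[
M_{m,n} \ = \ \sum_{\pi \text{ pair matching}} w_n(\pi),
\]
a limit strictly larger than the Gaussian weight $(m-1)!!$ obtained in the $n=0$ case of \cite{MMS}; already $M_{4,n}>3$ for $n\ge 1$, confirming the fatter tails advertised in the abstract.

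The main obstacle will be bounding $M_{m,n}$ from above tightly enough to apply Carleman's condition while still retaining unbounded support. The crude estimate $M_{m,n}\le (m-1)!! \cdot (C \cdot 2^n)^{m/2}$ gives growth of order $(C_n m)^{m/2}$, so $\sum_m M_{2m,n}^{-1/(2m)}$ diverges and the moment sequence uniquely determines a probability distribution; unbounded support follows by exhibiting a single family of matchings whose contribution grows faster than any exponential in $m$. The Moment Convergence Theorem (Theorem \ref{thm:momct}) then identifies the limit $F_n$.

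Finally, to promote convergence of expected moments into the probabilistic and almost sure statements, I would bound $\mathrm{Var}(X_{m,n;N})$ by expanding $\E[X_{m,n;N}^2]$ as a double trace sum indexed by two cycles of length $m$. The matchings that pair each index within its own cycle contribute exactly $(\E X_{m,n;N})^2$ to leading order, and all matchings that cross between the two cycles can be shown to contribute $O(1/N)$ by one fewer degree of freedom than in the single-cycle count; Chebyshev then delivers convergence in probability. Under the additional hypothesis that $p$ is even, odd mixed moments vanish and the slowly-decaying error terms disappear, permitting a fourth-moment bound $\E\!\left[(X_{m,n;N}-\E X_{m,n;N})^4\right]=O(1/N^2)$; summability along $N \in 2^n \Z$ together with Borel--Cantelli then upgrades convergence to almost sure. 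Combining with Theorem \ref{thm:momct} yields both conclusions of the theorem.
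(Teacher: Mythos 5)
Your proposal follows essentially the same route as the paper: Eigenvalue Trace Lemma and Markov's method of moments, reduction to pair matchings (with odd moments and higher-multiplicity groupings shown to vanish by degree-of-freedom counting), an upper bound on $M_{2m,n}$ of the form $(C\cdot 2^n)^m (2m-1)!!$ to verify Carleman's condition, Chebyshev on the variance for convergence in probability, and a fourth-moment bound plus Borel--Cantelli under evenness of $p$ for almost sure convergence. The only minor inaccuracy is treating the signs and shifts symmetrically as part of a combinatorial weight $w_n(\pi)$, whereas (as in Lemma \ref{thm:neg}) only the all-negative-sign resolution of the absolute values contributes in the limit; this does not affect your upper bound or the Carleman/convergence arguments.
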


As in other related ensembles, it is very difficult to obtain closed form expressions for the general moments of the limiting spectral measure. We can, however, analyze the moments well enough to determine the limiting distribution has unbounded support; in fact, as the following theorem shows, it has fatter tails than previously studied ensembles.

\begin{thm}\label{thm:fattails}{\rm \textbf{(Fat Tails)}} Consider the ensemble from Theorem \ref{thm:mainweakstrong}. For any fixed $n \ge 2$, the moments grow faster than the corresponding moments of the standard normal, and thus our density has fatter tails than the standard normal. Specifically, if $f_n$ denotes our density and $\phi$ denotes the density of the standard normal, for all $B$ sufficiently large we have $\int_B^\infty f_n(x) dx > \int_B^\infty \phi(x)dx$ (i.e., the probability of observing a value at least $B$ is larger for our limiting spectral distribution that the corresponding value for the standard normal). \end{thm}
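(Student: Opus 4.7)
The argument has two stages. Stage (i): show that for each fixed $n\ge 2$ the even moments of the limiting density $f_n$ grow strictly faster than the Gaussian moments $(2k-1)!!$, by exploiting the fact that the link function $\psi$ identifies $2^{n+1}$ diagonals with each independent parameter $b_\ell$, strictly increasing the number of contributing index configurations in the moment expansion. Stage (ii): convert this moment comparison into the tail inequality via a tail--moment duality.

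\textbf{Moment comparison.} By the Eigenvalue Trace Lemma, the $2k$\ts{th} moment of $f_n$ equals
\be
M_{2k,n} \ =\ \lim_{N\to\infty}\frac{1}{N^{k+1}}\sum_{i_0,\ldots,i_{2k-1}=1}^{N}\E\!\left[\prod_{j=0}^{2k-1}b_{\psi(i_j,\,i_{j+1})}\right]\qquad (i_{2k}:=i_0),
\ee
and, arguing as in the proof of Theorem \ref{thm:mainweakstrong}, the leading contribution decomposes as $M_{2k,n}=\sum_\pi T_n(\pi)$ over pair partitions $\pi$ of $\{1,\dots,2k\}$, with $T_n(\pi)$ the rescaled count of index tuples whose two labels in each pair agree. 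The Massey--Miller--Sinsheimer analysis of the single-palindrome case $n=0$ amounts to $T_0(\pi)=1$ for every $\pi$, giving $M_{2k,0}=(2k-1)!!$. For $n\ge 2$ the constraint $\psi(i_{j-1},i_j)=\psi(i_{\ell-1},i_\ell)$ inside a pair $(j,\ell)\in\pi$ admits new solutions, roughly of the form $d_j\equiv\pm d_\ell\pmod{N/2^n}$, in addition to the $d_j=\pm d_\ell$ already available at $n=0$ (with $d_m:=i_m-i_{m-1}$). Propagating these multiplicities through the closing relation $\sum_m d_m\equiv 0\pmod N$ gives $T_n(\pi)\ge 1$ with strict inequality on a positive-density collection of $\pi$, so that for some constant $c_n>1$ depending only on $n\ge 2$,
\be
M_{2k,n}\ \ge\ c_n^{\,k}\,(2k-1)!!\qquad\text{for all sufficiently large }k.
\ee

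\textbf{From moments to tails.} Assume toward a contradiction that $\int_B^\infty f_n\le\int_B^\infty\phi$ for every $B\ge B_0$. With $\ol F_n(x):=\int_x^\infty f_n$ and $\ol\Phi(x):=\int_x^\infty\phi$, symmetry of $f_n$ (inherited from evenness of $p$) and integration by parts give $M_{2k,n}=4k\int_0^\infty x^{2k-1}\ol F_n(x)\,dx$; splitting at $B_0$ and bounding $\ol F_n\le 1$ on $[0,B_0]$ and $\ol F_n\le\ol\Phi$ on $[B_0,\infty)$ yields
\be
M_{2k,n}\ \le\ 2B_0^{\,2k}+(2k-1)!!\qquad\text{for all }k.
\ee
Since $(2k-1)!!$ grows super-exponentially in $k$ while $B_0^{\,2k}$ grows only exponentially, this contradicts $M_{2k,n}\ge c_n^{\,k}(2k-1)!!$ once $k$ is large. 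Thus $\ol F_n(B)>\ol\Phi(B)$ must hold at arbitrarily large $B$, and a short additional argument using continuity of $f_n-\phi$ and the fact that the tail integrals are comparable only when $f_n$ dominates $\phi$ near infinity upgrades this to the claim for all $B$ sufficiently large.

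\textbf{Main obstacle.} The primary difficulty lies in the combinatorial bookkeeping for $T_n(\pi)$: one must count the new label matchings produced by the $2^{n+1}$-fold repetition in $\psi$ without double-counting the $d_j=\pm d_\ell$ solutions already present in the single-palindrome case, and verify that the extra degrees of freedom remain compatible with the closing constraint $\sum_m d_m\equiv 0\pmod N$. This combinatorial accounting is also what forces the restriction $n\ge 2$: for $n=1$ the new identifications degenerate against the palindrome symmetry already exploited in \cite{MMS}, so no net multiplicative gain appears.
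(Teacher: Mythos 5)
Your overall strategy mirrors the paper's: compare even moments to $(2m-1)!!$ and then translate a moment gain into a tail gain. Your Stage~(ii) is actually \emph{more} careful than the paper's (which simply asserts ``thus more mass in the tails''); the integration-by-parts identity $M_{2k,n}=4k\int_0^\infty x^{2k-1}\ol F_n(x)\,dx$ together with the split at $B_0$ is exactly the argument needed to make that implication rigorous, and it is a genuine improvement over the paper's terse ``and thus.''

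The gap is entirely in Stage~(i). You assert $M_{2k,n}\ge c_n^k(2k-1)!!$ with $c_n>1$, justified by ``$T_n(\pi)\ge1$ with strict inequality on a positive-density collection of $\pi$.'' That inference is a non-sequitur: if some fixed fraction $\rho$ of pair partitions satisfies $T_n(\pi)\ge 1+\delta$ with the rest $\ge 1$, you get $M_{2k,n}\ge(1+\rho\delta)(2k-1)!!$, a \emph{constant} multiplicative gain, not a factor $c_n^k$. To get $c_n^k(2k-1)!!$ you would need each $T_n(\pi)$ on a positive-density set to itself grow exponentially in $k$, which is precisely the content of Conjecture~\ref{conj:configurationconjequality} --- the paper explicitly leaves this unproven. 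What the paper does prove unconditionally (Lemma~\ref{lem:boundevenmomentsnlower}) is $T_n(\pi_{\rm adj})\gg 2^{m+n}/m$ for the \emph{two} fully adjacent configurations only, together with $T_n(\pi)\ge1$ for every $\pi$; this gives $M_{2m,n}\gg (2m-1)!!+2^{m+n}/m$, an \emph{additive} gain. Feeding that into your Stage~(ii), the contradiction with $M_{2m,n}\le B_0^{2m}+(2m-1)!!$ only obtains for $B_0<\sqrt2$, so even the constant-factor strengthening you would need is not established --- the paper is, in effect, implicitly leaning on the Conjecture for the ``for all $B$ sufficiently large'' conclusion too. (A minor side note: your explanation of the $n\ge2$ restriction misidentifies the indexing --- in this paper's conventions $n=0$ is the single-palindrome ensemble of \cite{MMS}, and $n=1$ is already the doubly palindromic case with $M_{4,1}=4.5>3$; the $n\ge 2$ hypothesis comes from the comparison with \emph{arbitrary} normals, not from a degeneration at $n=1$.)
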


Similar to other papers, our analysis is based on analyzing the contribution from various matchings. We are able to analyze in complete detail the case when we have all adjacent matchings. Based on numerical investigations and some theoretical calculations, we believe that all configurations contribute equally; see Conjecture \ref{conj:configurationconjequality} for a precise statement. If true, this would allow us to sharpen Theorem \ref{thm:fattails}.

\begin{thm}\label{thm:sharpenedresult} Assuming Conjecture \ref{conj:configurationconjequality}, if $M_{2m,n}$ denotes the $2m$\textsuperscript{th} moment of the limiting spectral measure of our ensemble for a given $n$, then \be M_{2m,n} \ \gg \  \frac{2^{m+n}}{m} \cdot (2m-1)!!.\ee  The limiting spectral measure thus has unbounded support, and fatter tails than the standard normal (or in fact any of the known limiting spectral measures arising from an ensemble where the independent entries are chosen from a density whose moment generating function converges in a neighborhood of the origin). \end{thm}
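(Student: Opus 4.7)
The plan is to push the method-of-moments expansion underlying the proof of Theorem \ref{thm:mainweakstrong} to a quantitative lower bound on the $2m$-th moment $M_{2m,n}$, organized by pairings of the $2m$ diagonal labels appearing in a typical term of the trace expansion of $A_N^{2m}$. I would read off the contribution of a single distinguished pairing, the adjacent one (whose contribution is worked out in complete detail earlier in the paper), and then invoke Conjecture \ref{conj:configurationconjequality} to multiply by the total number $(2m-1)!!$ of pairings, obtaining the claimed lower bound.

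Concretely, I would first recall the paper's explicit analysis of the adjacent pairing $\{\{1,2\},\{3,4\},\dots,\{2m-1,2m\}\}$ of the cyclic word $i_1 i_2 \cdots i_{2m}$. Each matched pair contributes one binary sign choice from the palindromic symmetry $b_k = b_{-k}$, and the additional $2^n$-fold periodicity of the first row produces $n$ extra binary choices per pair, of which only those compatible with the global closed-walk constraint $\sum_k (i_{k+1}-i_k) \equiv 0$ survive. After dividing by $m$ to remove the cyclic overcounting of the walk, direct counting yields a contribution of order $2^{m+n}/m$ to $M_{2m,n}$ from this configuration. Conjecture \ref{conj:configurationconjequality} then asserts that each of the $(2m-1)!!$ pairings contributes the same amount to leading order, so
\begin{equation}
M_{2m,n}\ \gg\ (2m-1)!!\cdot\frac{2^{m+n}}{m},
\end{equation}
which is precisely the bound claimed in Theorem \ref{thm:sharpenedresult}.

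Unboundedness of support and the comparison with other limiting spectral densities then follow from this growth rate. The standard normal has $M_{2m}=(2m-1)!!$; more generally, every known limiting spectral measure coming from an ensemble whose independent entries have a moment generating function convergent near the origin is sub-Gaussian, with $M_{2m}\le C(2m-1)!!$ for a constant depending only on the ensemble. Our bound exceeds all of these by the diverging factor $2^{m+n}/(mC)$, and the same tail-transference argument used to prove Theorem \ref{thm:fattails} (integrate $f_n - g$ against $x^{2m}$ and bound the bounded-set contribution by $O(B_0^{2m})$) then forces $\int_B^\infty f_n(x)\,dx > \int_B^\infty g(x)\,dx$ for all $B$ sufficiently large. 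The principal obstacle is of course Conjecture \ref{conj:configurationconjequality} itself; granted it, the proof reduces to the adjacent-configuration combinatorics already carried out in the body of the paper, together with a standard moment-to-tail comparison.
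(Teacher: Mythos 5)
Your overall strategy matches the paper's: compute the contribution of the all-adjacent pairing and multiply by $(2m-1)!!$ via Conjecture \ref{conj:configurationconjequality}, then compare moment growth with the Gaussian. However, your derivation of the $\frac{2^{m+n}}{m}$ asymptotic for the adjacent contribution is incorrect. You argue that each pair contributes $1 + n$ binary choices (one sign, $n$ from periodicity), giving roughly $(2\cdot 2^n)^m = 2^{(n+1)m}$ total $C$-vectors, and then divide by $m$ ``to remove cyclic overcounting.'' This does not produce the claimed exponent: $2^{(n+1)m}/m \ne 2^{m+n}/m$ for $m\ge 2$. More fundamentally, the factor $1/m$ has nothing to do with cyclic overcounting (the trace computation already handles that), and the $C$-vectors do not each contribute $1$: they contribute weights that decay as the offsets grow.

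The mechanism the paper actually uses (Lemma \ref{lem:boundevenmomentsnlower}) is the following. For each $c \in \{1,\dots,2^n-1\}$, the adjacent $C$-vector cores built from $\pm C_c$ with $C_c = cN/2^n$ give a contribution $-2 + (2 - c/2^n)^m + (c/2^n)^m$, because the constraint on the free even indices leaves only a fraction $(2^n - c)/2^n$ of values available per nonzero $C$, and summing over core lengths with the binomial theorem produces these $m$-th powers. Summing over all $c$ (and including the zero vector) gives $-2(2^n-1) + 2^{-mn}\sum_{c=1}^{2\cdot 2^n - 1} c^m$, and the $1/m$ factor then comes from the integral approximation $\sum_{c=1}^{K} c^m \sim K^{m+1}/(m+1)$ with $K = 2^{n+1}$. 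That power-sum step is the actual source of both the $2^{m+n}$ and the $1/m$, and it is the piece your counting heuristic needs to be replaced by. The unboundedness-of-support and fatter-tails conclusions are fine once the moment growth is established.
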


The rest of the paper is organized as follows. We first establish some basic results about our ensembles and the associated measures in \S\ref{sec:diophform}. We then analyze the even moments in detail in \S\ref{sec:speccharhptm}, and prove our convergence claims in \S\ref{sec:converge}. We give the proof on the vanishing Diophantine obstructions for highly palindromic Toeplitz matrices and discuss the configurations of the different matchings of highly palindromic Toeplitz matrices; our conjecture on the contribution from various matchings is discussed in detail in the appendices. While it is difficult to isolate the exact value of these moments, we are able to analyze these moments well enough to prove our convergence claims and to have some understanding of the limiting spectral measure. The situation is different for both the fourth moment for any palindromicity, and we determine the exact values in \S\ref{sec:fourthdouble}.

While many of the arguments in this paper about general properties of the moments and convergence are straightforward generalizations of those in \cite{HM, MMS}, the higher degree of palindromicity creates numerous technical difficulties which must be overcome if we want to compute actual values of the moments. In particular, the combinatorics becomes significantly harder, as can be seen by the length of \S\ref{sec:fourthmomentandfattertails}, which is the heart of this paper.

It is worth remarking on the role numerical investigations played in our analysis. These studies were essential in highlighting the key features and illuminating the structure and the combinatorics. Unfortunately the rate of convergence is significantly slower than in the ensembles in \cite{HM, MMS}, and it is thus non-trivial to extract useful data from these simulations. We end the paper with a brief discussion in Appendix \ref{sec:numerics} on a fast way to test conjectured answers to the difficult combinatorial problems.

The results in this paper arise by increasing the palindromicity of the matrices, which leads to highly patterned matrices. A natural question is what happens if instead we weaken the structure. In a sequel work, M. Kolo$\check{{\rm g}}$lu, G. Kopp and S. J. Miller \cite{KKM} do just that. They study the ensemble of real symmetric period $m$--circulant matrices, where each diagonal is now periodic with period $m$ (a real symmetric circulant matrix has constant diagonals that wrap around). This gives an interpolation from the highly structured symmetric circulant matrices, whose limiting eigenvalue density is a Gaussian, to the ensemble of all real symmetric matrices, whose limiting eigenvalue density is a semi-circle. The combinatorics can be reinterpreted as a counting problem in algebraic topology, and closed form expressions are obtained. The limiting spectral measure is the product of a Gaussian with a polynomial of degree $2m-2$, and rapidly converges to the semi-circle as $m$ tends to infinity.



\section{Diophantine Formulation}\label{sec:diophform}

In this section we begin our analysis of the moments. We prove some combinatorial results which restrict the number of configurations which can contribute a main term; we then analyze the potential main terms in the following section.

Recall that for each matrix $A_N \in \Omega_{n,N}$ we associate a probability measure by placing a point mass of size $1/N$ at each of its normalized eigenvalues $\lambda_i (A_N)$:
\be \mu_{n, A_N} (x) dx\ :=\ \frac{1}{N} \sum^{N}_{i=1} \delta \left(x - \frac{\lambda_i (A_N)}{\sqrt{N}}\right) d x, \ee
where $\delta (x)$ is the Dirac delta function. Thus the $k$\textsuperscript{th} moment of $\mu_{n, A_N} (x)$ is
\be\label{eq:formulaMkNAN} M_{k,n;N}(A_N)\ :=\ \int^{\infty}_{-\infty} x^k \mu_{n, A_N} (x) d x\ =\ \frac{1}{N^{k /2  +1}} \sum^{N}_{i=1} \lambda^k_i (A_N). \ee
The expected value of the $k$\textsuperscript{th} moment of the $N\times N$ matrices in our ensemble, found by averaging over the ensemble with each $A_N$ weighted by \eqref{eq:probspace} and using the Eigenvalue Trace Lemma, is
\bea\label{eq:coseqetl} M_{k,n;N}\ :=\  \E [ M_{k,n;N}(A_N)] & \ = \ & \frac{1}{N^{k/2 + 1}} \sum_{1\leq i_1, \dots, i_k \leq N} \E [a_{i_1 i_2} a_{i_2 i_3} \cdots a_{i_k i_1}]  \nonumber\\ & \ = \ & \frac{1}{N^{k/2 + 1}} \sum_{1\leq i_1, \dots, i_k \leq N} \E [b_{\psi (i_1, i_2)} b_{\psi (i_2, i_3)} \cdots b_{\psi (i_k, i_1)}],  \nonumber\\  \eea where from \eqref{eq:probspace} the expectation equals \be \E[b_{\psi (i_1, i_2)} b_{\psi (i_2, i_3)} \cdots b_{\psi (i_k, i_1)}] \ := \ \int \cdots \int b_{\psi (i_1, i_2)} b_{\psi (i_2, i_3)} \cdots b_{\psi (i_k, i_1)} \prod_{i=0}^{\frac{N}{2^n}-1} p(b_i)db_i. \ee We let $M_{k,n}$ be the limit of the average moments; thus  \be\label{eq:formulaMkNANlimit} M_{k,n}\ :=\ \lim_{N\to\infty} M_{k,n;N}; \ee we will prove later that these limits exist.

Our goal is to understand the $M_{k,n}$, i.e., the limiting behavior of the moments in these ensembles. We use Markov's Method of Moments, which we summarize below. This is a standard method for proving results in the subject; a nice explicit summary of this method begins Section 3 of \cite{BB1}.

\bi

\item We first show $M_{m,n} = \lim_{N\to\infty} M_{m,n;N} = \lim_{N\to\infty} \E[M_{m,n;N}(A_N)]$ exists for $m$ a positive integer, with the $M_{m,n}$'s satisfying Carleman's condition: $\sum_{m=1}^\infty M_{2m}^{-1/2m}$ $=$ $\infty$. As these are the moments of the empirical distribution measures, this implies that the $M_{m,n}$'s are the moments of a distribution.

\item Convergence in probability follows from analyzing the second moment, namely showing ${\rm Var}(M_{m,n;N}(A_N) - M_{m,n})$ tends to zero as $N\to\infty$.

\item Almost sure convergence follows from showing the fourth moment tends to zero and then applying the Borel-Cantelli lemma.

\ei

We do the convergence calculations in \S\ref{sec:converge}; in this and the next few sections we determine the limiting behavior of the ensemble averages.

The odd moments are readily determined, as counting the degrees of freedom show the average odd moments vanish in the limit as $N \to \infty$.

\begin{lem}\label{lem:2.6} All the average odd moments vanish in the limit; i.e. $\lim_{N\to\infty} M_{2m+1,n;N}$ $ =0$
\end{lem}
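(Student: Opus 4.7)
The plan is to mimic the standard odd-moment argument from the Toeplitz setting, keeping careful track of the fact that the palindromic identifications only multiply counts by constants depending on the fixed parameter $n$.

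First I would expand the $(2m+1)$\textsuperscript{st} average moment via the Eigenvalue Trace Lemma exactly as in \eqref{eq:coseqetl}:
\be
M_{2m+1,n;N} \ = \ \frac{1}{N^{m+3/2}} \sum_{1 \le i_1,\dots,i_{2m+1} \le N} \E\bigl[b_{\psi(i_1,i_2)}\, b_{\psi(i_2,i_3)}\cdots b_{\psi(i_{2m+1},i_1)}\bigr].
\ee
Because the entries are i.i.d.\ with mean zero, a tuple $(i_1,\dots,i_{2m+1})$ contributes nothing unless every value of $\psi$ that appears in the product appears at least twice. Partition the $2m+1$ factors into groups of equal $\psi$-value; if there were $s$ such groups, then $\sum_g |g| = 2m+1$ with each $|g| \ge 2$. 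Since $2m+1$ is odd, at least one group has odd cardinality $\ge 3$, so $s \le m$.

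Next I would bound the number of contributing tuples for each matching pattern. For a classical real symmetric Toeplitz matrix, specifying which factors share a common $b$-value forces $s$ independent constraints among the cyclic differences $d_j = i_{j+1}-i_j$, so the number of tuples with a given matching pattern is $O(N^{s+1})$. Under our highly palindromic link function $\psi$, two differences $d_j$ and $d_k$ give the same $b$-index whenever they are congruent, up to sign, modulo $N/2^n$ and up to the palindromic reflection inside a block. For fixed $n$ this only multiplies the count by a constant $C_n$, so the number of contributing tuples in any matching pattern with $s$ groups is still $O_n(N^{s+1}) \le O_n(N^{m+1})$. The boundedness of higher moments of $p$ controls the actual expectation by a constant depending only on $m$, and the number of matching patterns is itself a finite function of $m$, so summing over all patterns yields
\be
\sum_{1 \le i_1,\dots,i_{2m+1} \le N} \bigl|\E[b_{\psi(i_1,i_2)}\cdots b_{\psi(i_{2m+1},i_1)}]\bigr| \ \ll_{m,n}\ N^{m+1}.
\ee

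Combining this with the prefactor $N^{-(m+3/2)}$ gives $M_{2m+1,n;N} \ll_{m,n} N^{-1/2} \to 0$ as $N\to\infty$. The only real subtlety is the second step: we must confirm that the extra $b$-identifications introduced by the palindromic structure do not create new free parameters in the index tuple but only a bounded (in $n$) multiplicative overcount, which is immediate because $n$ is fixed and each residue class mod $N/2^n$ together with its palindromic reflection contains $O_n(1)$ representatives among $\{1,\dots,N\}$ per allowed difference. With that verified, the degree-of-freedom count above is tight enough to conclude.
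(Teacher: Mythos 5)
Your proposal is correct and follows essentially the same route as the paper's own proof: both arguments partition the $2m+1$ factors into groups of equal link-function value, use the mean-zero condition to force every group to have size at least two, deduce from parity that some group has size at least three and hence there are at most $m$ groups, count at most $m+1$ degrees of freedom for the indices, and conclude decay at rate $O_{m,n}(N^{-1/2})$ after dividing by $N^{m+3/2}$. The only cosmetic difference is that you phrase the degree-of-freedom count in terms of the first index plus one representative difference per group, while the paper phrases it as two free indices for the first matching and one for each subsequent matching; both yield the same bound.
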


\begin{proof}
For the $(2m+1)$\ts{st} moment, we consider $\E[b_{\psi (i_1, i_2)} b_{\psi (i_2, i_3)} \cdots b_{\psi (i_{2m+1}, i_1)}]$; we may write this as $\E[b_{\ell_1}^{r_1} \cdots b_{\ell_j}^{r_j}]$ with $r_1+\cdots+r_j=2m+1$ and the $b_\ell$'s distinct. As $2m+1$ is odd, at least one $b_\ell$ is raised to an odd power. If any of these occur to just the first power, then the expectation is zero as the $b$'s are drawn from a mean zero distribution. Thus at least one of the $b_\ell$'s above occurs at least three times, and every $b_\ell$ occurs at least twice. The maximum number of distinct $b_\ell$'s occurs when everything is matched in pairs except for one triple matching. Thus there are at most $m$ different $b_\ell$'s in our tuple, and the number of tuples is bounded independent of $N$. We have two degrees of freedom from the first matching of the $b_\ell$'s and one degree of freedom for each other matching,\footnote{For example, say $b_{\psi(i_1,i_2)} = b_{\psi(i_v,i_{v+1})}$, with $i_1$ our first index. Both $i_1$ and $i_2$ are free variables and we have $N$ choices for each; however, $i_v$ is not (it will have occurred in a matching before this point), and   $i_{v+1}$ is determined by requiring the two $b_\ell$'s under consideration to be equal. The number of choices for $i_{v+1}$ depends on $n$ (the larger $n$ is, the more diagonals work); what matters is that the number of choices for $i_{v+1}$ is bounded independently of $N$. Whenever we have a new pair, we have a new choice for the value of the link function, and thus gain a degree of freedom.} for a total of at most $m+1$ degrees of freedom. Thus the number of indices $i_1, \dots, i_k \in \{1,\dots,N\}$ that can contribute to the moment in \eqref{eq:coseqetl} for a given matrix is $O_n(N^{m+1})$ (where the big-Oh constant may depend on $n$, as the larger $n$ is the more choices we have for diagonals). As we divide by $N^{m+3/2}$ in \eqref{eq:coseqetl}, the odd moments are $O_n(N^{-1/2})$, and thus vanish in the limit as $N\to\infty$.
\end{proof}

%


%

\begin{cor}\label{cor:cortolemma26} For fixed $n$, as $N\to\infty$ there is no contribution to the average $2m$\ts{th} moment from any tuple where the $b_\ell$'s are not matched in pairs. \end{cor}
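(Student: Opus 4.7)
The plan is to mimic the degree-of-freedom argument from Lemma \ref{lem:2.6}, but applied to an even moment in which not every $b_\ell$ is matched in a pair, and show that such a tuple contributes only a lower-order error to \eqref{eq:coseqetl}.

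First I would start from the expression
\be
M_{2m,n;N} \ = \ \frac{1}{N^{m+1}} \sum_{1 \le i_1,\dots,i_{2m} \le N}
\E[b_{\psi(i_1,i_2)} b_{\psi(i_2,i_3)} \cdots b_{\psi(i_{2m},i_1)}]
\ee
and, for any fixed tuple $(i_1,\dots,i_{2m})$, rewrite the expectation as
$\E[b_{\ell_1}^{r_1} \cdots b_{\ell_j}^{r_j}]$ where the $b_{\ell_s}$ are distinct and $r_1 + \cdots + r_j = 2m$. By independence this factors as $\prod_s \E[b_{\ell_s}^{r_s}]$. Since the entries are mean zero, any $r_s = 1$ forces the whole expectation to vanish, so only tuples in which every $r_s \ge 2$ can contribute. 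The hypothesis of the corollary is that the matching is not purely in pairs, so in addition at least one $r_s \ge 3$.

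Next I would count the number of such surviving tuples. Writing $r_s \ge 2$ for all $s$ with at least one $r_s \ge 3$ and $\sum_s r_s = 2m$ forces the number of distinct $b_\ell$'s to satisfy $j \le m-1$. Exactly as in the proof of Lemma \ref{lem:2.6}, the first appearance of a new $b_\ell$ contributes two free indices (the endpoints of the corresponding step) while each later step that matches an already-seen $b_\ell$ contributes at most one more free index, independent of $N$ (the multiplicative constant depends on $n$, since the link function $\psi$ allows a bounded number of diagonals to collapse, but this dependence is harmless for fixed $n$). Hence the number of tuples $(i_1,\dots,i_{2m}) \in \{1,\dots,N\}^{2m}$ with this matching pattern is $O_n(N^{j+1}) = O_n(N^m)$.

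Finally I would combine these two bounds: every surviving non-pair tuple contributes $O_n(1)$ to the expectation (since all higher moments of $p$ are finite by assumption), and there are $O_n(N^m)$ such tuples, so their total contribution to \eqref{eq:coseqetl} is at most $O_n(N^m)/N^{m+1} = O_n(N^{-1})$, which vanishes as $N\to\infty$. Summing over the finitely many possible matching patterns (bounded by a partition count depending only on $m$) preserves this estimate. The only potential obstacle I anticipate is bookkeeping the dependence on $n$ in the constant arising from the link function $\psi$ when a new step is forced to match an earlier one; but since $n$ is fixed throughout and the number of diagonals that can realize a given value of $\psi$ is at most $2^{n+1}$, this constant is finite and the bound goes through. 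Thus only pair matchings can contribute to $\lim_{N\to\infty} M_{2m,n;N}$, which is the desired conclusion.
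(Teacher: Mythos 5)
Your proposal is correct and follows essentially the same degree-of-freedom argument that the paper invokes by referring to Lemma \ref{lem:2.6}: a non-pair matching with every $r_s \ge 2$ forces $j \le m-1$ distinct $b_\ell$'s, so at most $j+1 \le m$ free indices, giving $O_n(N^m)/N^{m+1} = O_n(1/N) \to 0$. One small wording slip: it is each step introducing a \emph{new} $b_\ell$ (after the first) that adds a free index, while a step matching an already-seen $b_\ell$ contributes only $O_n(1)$ choices and no new degree of freedom; your stated bound $O_n(N^{j+1})$ is nonetheless the right one.
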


\begin{proof}
The corollary follows from a similar analysis as in Lemma \ref{lem:2.6}.
\end{proof}

From the above corollary, we see that in order to study the eigenvalues of our matrices we need to know how many different ways the $k=2m$ entries (the $a_{i_j i_{j+1}}$'s) in our tuples can be matched into $k/2=m$ pairs. Letting $r!! = r(r-2)(r-4)\cdots$, where the product stops at $1$ if $r$ is odd and $2$ if $r$ is even, we see there are at most $(2m-1)!!$ ways to match in pairs.\footnote{There are $\ncr{2m}{2}$ ways to choose the first pair, $\ncr{2m-2}{2}$ ways to choose the second and so on; we must divide by $m!$ as it does not matter which pair we call the first. The claim follows by elementary algebra. Alternatively we can prove this by induction. Assume there are $(2m-3)!!$ ways to match $2m-2$ objects in pairs. If we have $2m$ objects, there are $2m-1$ choices of an element to pair with the first element in our list, and then by induction there are $(2m-3)!!$ ways of pairing the remaining $2m-2$ elements.} Note $(2m-1)!!$ is the $2m$\textsuperscript{th} moment of the standard normal, and has the combinatorial interpretation of being the number of ways of matching $2m$ objects in $m$ pairs where order does not matter.  For each legitimate matching we obtain a system of $m$ equations, one for each pair of entries, for which the number of solutions is the contribution of the matching to the $2m$\textsuperscript{th} moment.

In order to understand the even moments, we need to know more about the permissible matchings, and how many choices of the indices lead to valid configurations. In the original case of the ensemble of real symmetric Toeplitz matrices \cite{HM}, the only way any two entries $b_\ell$ could match was for them to lie on the same diagonal or on the reflection of that diagonal over the main diagonal. That is, they matched if and only if \be |i_{p} - i_{p+1}|\ =\ | i_l - i_{l+1}|. \ee

For highly palindromic Toeplitz matrices, more relations give matchings (as seen in the investigation of palindromic matrices in \cite{MMS}). An entry for which the absolute value of the difference between its indices is in a given congruence class modulo $2^n$ can match with another entry if and only if it is in the same congruence class or its negative. That is, two entries $a_{i_{p} i_{p+1}}$ and $a_{i_l i_{l+1}}$ can be matched in a pair if and only if their indices satisfy one of the following relations:

\begin{enumerate}

\item there is a $C_1 \in \{ (- \lfloor \frac{|i_l - i_{l+1}|}{2^n} \rfloor + k - 1) \frac{N}{2^n} - 1\mid k\in\{1,\ldots, 2^n\} \}$ such that
\be  \label{eq:match1} | i_p - i_{p+1}|\ =\ | i_l - i_{l+1}| + C_1; \ee
\item  there is a $C_2 \in \{ (\lfloor \frac{|i_l - i_{l+1}|}{2^n} \rfloor + k) \frac{N}{2^n} \mid k\in\{1,\ldots, 2^n\}  \}$ such that
\be\label{eq:match2}  |i_p - i_{p+1}|\ =\ - |i_l - i_{l+1}|  + C_2; \ee as is standard, $\lfloor x \rfloor$ represents the largest integer at most $x$.

\end{enumerate}

As a consequence of \eqref{eq:match1} and \eqref{eq:match2}, for the matchings above there is some $C$ such that \be\label{eq:matchingeqsigns} i_p - i_{p+1} \ = \ \pm (i_l - i_{l+1}) + C. \ee As there are two choices for sign for each of the $m$ matchings, there are potentially $2^m$ cases that can contribute. We now prune down the number of possibilities greatly by showing only one case contributes in the limit, namely the case when all the signs are negative.

In the Toeplitz ensembles studied in \cite{HM} and \cite{MMS}, it was shown that any matching with a positive sign (i.e., as in \eqref{eq:matchingeqsigns}) in any pair contributes a lower order term to the moments, and thus it sufficed to consider the case where only negative signs occurred. A similar result holds here, which greatly limits the number of cases we need to investigate. Note by Lemma \ref{lem:2.6} we need only investigate the even moments.

\begin{lem}\label{thm:neg} Consider the contribution to the $2m$\textsuperscript{{\rm th}} moment from all tuples $(i_1, \dots$, $i_{2m})$ in which the corresponding $b_\ell$'s are matched in pairs. If an $a_{i_q i _{q+1}}$ is matched with an $a_{i_l i _{l+1}}$ with a positive sign (which means
\[  i_q - i_{q+1}\ =\  + (i_l - i_{l+1})  +C\]
for some $C$ as defined in \eqref{eq:match1} or \eqref{eq:match2}), then this matching contributes $O_m(1/N)$ to $M_{2m,n;N}$ and therefore the contribution of all but one of the $2^m$ choices for the $m$ signs vanishes in the limit as $N\to\infty$, with only the choice of all negative signs being able to contribute in the limit.
\end{lem}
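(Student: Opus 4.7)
The plan is to translate the $m$ pairing conditions into a linear system in the difference variables $D_p := i_p - i_{p+1}$ (with $i_{2m+1} := i_1$) and to extract the degree-of-freedom count from the rank of this system together with the automatic telescoping identity $\sum_{p=1}^{2m} D_p = 0$. The pivotal observation is that the telescoping identity is a consequence of the $m$ pairing equations exactly in the all-negative case, but becomes an independent constraint — removing one $D$-dimension and hence a factor of $N$ from the tuple count — as soon as any single positive sign appears.

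To execute this, I would first record that each matched pair gives an equation of the form $D_{p_j} - \epsilon_j D_{l_j} = C_j$, where $\epsilon_j \in \{\pm 1\}$ and $C_j$ ranges over the finite set specified in \eqref{eq:match1}--\eqref{eq:match2}, whose cardinality is bounded in terms of $n$ and $m$ alone. The coefficient matrix $B \in \R^{m \times 2m}$ of this system has rows with pairwise disjoint two-element supports, because the positions $\{p_j, l_j\}_{j=1}^{m}$ partition $\{1,\dots,2m\}$. Thus $B$ has full row rank $m$, the affine $D$-solution set has dimension $m$, and every such $D$ lifts to exactly $N$ admissible $i$-tuples in $\{1,\dots,N\}^{2m}$ (one per cyclic translation), provided the additional telescoping relation $\vec{1}^\top D = 0$ holds.

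Next I would determine when $\vec{1}^\top D = 0$ is automatic given $BD = C$; this is equivalent to asking whether $\vec{1} \in \R^{2m}$ lies in the row span of $B$. Writing $\vec{1} = \sum_j \lambda_j (e_{p_j} - \epsilon_j e_{l_j})$ and comparing coefficients at columns $p_j$ and $l_j$ forces simultaneously $\lambda_j = 1$ and $-\epsilon_j \lambda_j = 1$, which is consistent if and only if $\epsilon_j = -1$ for every $j$. Consequently, in the all-negative case the telescoping identity is automatic (modulo the harmless compatibility condition $\sum_j C_j = 0$ on the constants), the full $m$-dimensional $D$-solution space survives, and the $N$-fold cyclic lift produces $O_{m,n}(N^{m+1})$ admissible tuples, matching the normalization $N^{m+1}$ in \eqref{eq:coseqetl} to yield the expected $O_{m,n}(1)$ contribution. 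The moment even one $\epsilon_j$ equals $+1$, the telescoping equation becomes a genuinely additional constraint cutting the $D$-space to dimension $m-1$ and the tuple count to $O_{m,n}(N^m)$, which divided by $N^{m+1}$ produces the claimed $O_{m,n}(1/N)$ bound.

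The main subtlety — and the only nonroutine step — is the coefficient-comparison argument determining when $\vec{1}$ belongs to the row span of $B$; everything else is bookkeeping. The constants $C_j$ take only finitely many values (bounded in $n$ and $m$ but independent of $N$), so summing over them preserves the $N$-asymptotics, and there are only $2^m - 1$ non-all-negative sign patterns, so summing their contributions preserves the $O_{m,n}(1/N)$ estimate. This is the natural adaptation of the arguments in \cite{HM, MMS} to our ensemble, with the sole new feature being the enlarged but still finite set of permissible $C_j$ arising from the higher degree of palindromicity.
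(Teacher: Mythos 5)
Your proposal is correct and follows the same underlying strategy as the paper: reduce to a degree-of-freedom count and observe that the telescoping identity $\sum_p (i_p - i_{p+1}) = 0$ is implied by the $m$ pairing equations precisely when every sign is negative, so that a positive sign removes one dimension from the solution set and hence one factor of $N$. Your linear-algebra framing (whether $\vec{1}$ lies in the row span of the matching matrix $B$, settled by comparing coefficients at the two positions in each disjoint pair $\{p_j, l_j\}$) is a cleaner and more explicit rendering of what the paper expresses by saying that a positive $\epsilon_r$ makes the $x_j$'s linearly dependent in the expanded telescoping sum.
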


\begin{proof} The argument is essentially the same as in \cite{MMS}. Briefly, the idea is that if there is ever a positive sign then we lose a degree of freedom, leading to a lower order contribution.

For any tuple $(i_1, \dots, i_{2m})$ in which the corresponding $b_\ell$'s are matched in pairs, there exist $k$ equations, one for each pairing, of the form
\be  \label{eq:match 3} i_q - i_{q+1}\ =\  \epsilon_l(i_l - i_{l+1})  + C_l \text{ where } \epsilon_l \ = \ 1\text{ or } -1. \ee
Let $x_1, x_2, \dots, x_{2m}$ denote the absolute value of the difference between two indices of each entry (so for $a_{i_l,i_{l+1}}$ it would be $x_l = |{i_1}-i_{l+1}|$), and let $ \tilde{x}_1 = i_1 - i_2,  \tilde{x}_2 = i_2 - i_3, \dots $ and $ \tilde{x}_{2m} = i_{2m} - i_1$ (i.e., the unsigned differences). It follows immediately that
\be\label{eq:tildesum} \sum^{2m}_{i=1} \tilde{x}_i\ =\ 0.\ee
Each $\tilde{x}_r$ can be expressed in two ways.  By breaking the absolute value sign in \eqref{eq:match1} or \eqref{eq:match2}, we have $ \tilde{x}_r = \eta_j x_j$ for some $j$ with $\eta_j = 1$ or $-1$. We can also express it through an equation like the one in \eqref{eq:tildesum} such that $ \tilde {x}_r = \epsilon_r  \tilde{x}_l + C_r$ for some $l$. Thus
\be\label{eq:tildex} \tilde{x}_r\ =\  \eta_j x_j = \epsilon_r \tilde{x}_l +C_l. \ee
Then since $\epsilon_r^2=1$,
\be\label{eq:oldeq220} \tilde{x}_l \ = \    \epsilon_r \eta_j x_j - \epsilon_r C_l.\ee
Note each absolute value of a difference occurs twice, as everything is matched in pairs. We therefore have \be  \sum^{2m}_{i=1} \tilde{x}_i \ = \ \sum^{m}_{j=1} [ \eta_j x_j +  (\epsilon_r \eta_j x_j - \epsilon_r C_n)] \  =  \  \sum^{m}_{j=1} (n_j x_j (1 + \epsilon_r) - \epsilon_r C_j)\ =\   0.\ee

If any $\epsilon_r = 1$, then the $x_j$'s are not linearly independent and we would have less than $m+1$ degree of freedom.\footnote{As in the proof of Lemma \ref{thm:neg}, the first pair gives us two degrees of freedom and each subsequent pair gives at most one degree of freedom. If the $x_j$'s are not linearly independent, there can be at most $m-1$ independent $x_j$'s, and thus at most $m$ degrees of freedom. } The contribution from such tuples to the moment in \eqref{eq:coseqetl} for a given matrix is therefore $O(1/N)$ (as we divide by $N^{m+1}$), which vanishes in the limit as $N\to \infty$ and can thus be safely ignored.\end{proof}


Lemma \ref{thm:neg} immediately implies

\begin{lem}\label{lem:old221222}
If the indices of $a_{i_l i_{l+1}}$ and $a_{i_q i_{q+1}}$ satisfy \eqref{eq:match1} for some $C_1$, then  $|i_q - i_{q+1}| = |i_l - i_{l+1}| + C_1$ implies
\be\label{eq:2.15}
\begin{cases}
i_q - i_{q+1}\ =\ - (i_l - i_{l+1}) + C_1  \\
i_q\ >\ \max \{i_{q+1}, i_{q+1} + C_1\} \\
\end{cases} \\
\text{ or }
\begin{cases}
i_q - i_{q+1}\ =\ - (i_l - i_{l+1}) - C_1  \\
i_q\ <\ \min \{i_{q+1}, i_{q+1} - C_1\}. \\
\end{cases}
\ee

Similarly, if the indices satisfy \eqref{eq:match2} for some $C_2$, then $|i_q - i_{q+1}| = -|i_l - i_{l+1}| + C_2$ implies
\be\label{eq:2.16}
\begin{cases}
i_q - i_{q+1}\ =\ - (i_l - i_{l+1}) + C_2\\
i_{q+1}\ <\ i_q\ <\ i_{q+1} + C_2, \\
\end{cases} \\
\text{ or }
\begin{cases}
i_q - i_{q+1}\ =\ - (i_l - i_{l+1}) - C_2\\
i_{q+1} - C_2\ <\ i_q\ <\ i_{q+1}.\\
\end{cases}
\ee
\end{lem}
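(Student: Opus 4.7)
The plan is a short case analysis combining Lemma \ref{thm:neg} with the arithmetic of unpacking absolute values. Since Lemma \ref{thm:neg} shows that only negative-sign matchings contribute a main term to $M_{2m,n;N}$, I would begin by writing $i_q - i_{q+1} = -(i_l - i_{l+1}) + C$ for some shift $C$; the only question is which admissible value of $C$ occurs and what constraint this places on $i_q$ relative to $i_{q+1}$.

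For \eqref{eq:match1}, I would split $|i_q - i_{q+1}| = |i_l - i_{l+1}| + C_1$ into its four sign sub-cases; Lemma \ref{thm:neg} rules out the two giving positive-sign matchings, leaving exactly two candidates. If $i_q - i_{q+1} > 0$ and $i_l - i_{l+1} < 0$, the defining equation becomes $i_q - i_{q+1} = -(i_l - i_{l+1}) + C_1$, and combining this with the magnitude relation (which forces $|i_q - i_{q+1}| \geq C_1$ when $C_1 > 0$) gives $i_q > i_{q+1} + C_1$ together with $i_q > i_{q+1}$, i.e., $i_q > \max\{i_{q+1}, i_{q+1} + C_1\}$. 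The symmetric sign choice yields $i_q - i_{q+1} = -(i_l - i_{l+1}) - C_1$ and the mirror inequality $i_q < \min\{i_{q+1}, i_{q+1} - C_1\}$. These are precisely the two clauses of \eqref{eq:2.15}, and the $\max$/$\min$ packaging in the statement accommodates both signs of $C_1$ uniformly.

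The treatment of \eqref{eq:match2} is parallel. Now $|i_q - i_{q+1}| = -|i_l - i_{l+1}| + C_2$ forces both $|i_l - i_{l+1}|$ and $|i_q - i_{q+1}|$ into $[0, C_2]$, and the two surviving negative-sign cases are precisely those where $i_q - i_{q+1}$ and $i_l - i_{l+1}$ share the same sign. The both-positive case yields $i_q - i_{q+1} = -(i_l - i_{l+1}) + C_2$ with $0 < i_q - i_{q+1} < C_2$, i.e., $i_{q+1} < i_q < i_{q+1} + C_2$; the both-negative case yields $i_q - i_{q+1} = -(i_l - i_{l+1}) - C_2$ with $-C_2 < i_q - i_{q+1} < 0$, i.e., $i_{q+1} - C_2 < i_q < i_{q+1}$. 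These are the two clauses of \eqref{eq:2.16}.

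The proof is essentially bookkeeping once Lemma \ref{thm:neg} is in hand; the only care required is consistency of strict versus non-strict inequalities and tracking signs when $C_1$ or $C_2$ may itself be negative, with degenerate configurations such as $i_l = i_{l+1}$ corresponding to diagonal entries whose contribution to the moment is lower order in $N$ and can be discarded. Lemma \ref{thm:neg} has already done the substantive work of eliminating positive-sign matchings; this lemma simply repackages the surviving cases in a form convenient for counting solutions in the subsequent sections.
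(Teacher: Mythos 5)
Your proof is correct and takes precisely the route the paper intends: the paper gives no explicit argument for this lemma, saying only that it follows immediately from Lemma \ref{thm:neg}, and your case analysis simply makes that deduction explicit. The only point worth tightening is that in the \eqref{eq:match1} case you write that the magnitude relation forces $|i_q - i_{q+1}| \geq C_1$, whereas the strict inequalities in the lemma's conclusion require $|i_l - i_{l+1}| > 0$ (so that the inequality is strict); you do flag the degenerate $i_l = i_{l+1}$ situation as discardable, which closes that gap, but it would read more cleanly to state the strict inequality directly.
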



Instead of considering each value of $C$ (either $C_1$ or $C_2$) individually, we will consider a pair of constants $C_1, C_2$ such that $C_1 + C_2 = N -1$. We claim that this removes some of the Diophantine obstructions that arise when evaluating \eqref{eq:2.15} or \eqref{eq:2.16} individually. Given an entry $a_{i_l i_{l+1}}$, we can associate each value of $C$ with one diagonal whose entries, generally denoted by $a_{i_r i_{r+1}}$, all equal $a_{i_l i_{l+1}}$. Except for the main diagonal, every other diagonal has fewer than $N$ entries and therefore the index $i_r \in \{a,\dots,b\}$ where $1\leq a<b \leq N$ rather than $i_r \in \{1, \dots, N\}$. Here we only need to restrict one of the two indices of $a_{i_r i_{r+1}}$ and the other one will automatically be determined. However, by considering $a_{i_r i_{r+1}}$ on a pair of diagonals associated with $C_1, C_2$, we can take the index $i_r$ (or $i_{r+1}$) to be any value between $1$ and $N$. Furthermore, except for $O(1)$ values, the first index of entries from the pair of diagonals associated with $C_1, C_2$ are distinct, and similarly for the second index. Therefore, if $a_{i_r i_{r+1}}$ is on the diagonal associated with $C_1$ and $a_{i^{'}_r i^{'}_{r+1}}$ is on the diagonal associated with $C_2$, then for some $a,b \in \{1,\dots,N\}$, we have:
\be
\begin{cases}
i_r \in \{a,\dots,b\} \\
i^{'}_r \in \{0,\dots,a\} \cup \{b,\dots,N\} \\
\end{cases}
\ee














\section{Properties of the Even Moments}\label{sec:speccharhptm}

In Lemma \ref{lem:2.6} we showed that the average odd moments vanish in the limit. In this section we analyze the even moments. While the low moments may be computed by brute force (we provide the computation for the fourth moment in \S\ref{sec:fourthmomentandfattertails} and discuss its consequences), similar to other ensembles we are unable to obtain nice closed form expressions for the higher moments in general, although a combination of numerical simulations and some partial results suggest the answer for the doubly palindromic case; see Appendices \ref{sec:numerics} and \ref{sec:configurationconj}.


\subsection{General Properties}

We first handle the zeroth and second moments, and then turn to the higher moments.

\begin{lem} Assume that $p$ has mean 0, variance 1 and finite higher moments, and fix the degree of palindromicity $n$. Notation as above (see \eqref{eq:formulaMkNAN} and \eqref{eq:formulaMkNANlimit}), for all $A_N$ we have $M_{0,n;N}(A_N) = 1$ and $M_{2,n;N}(A_N) = 1$, which implies the average moments in the limit are both 1 (explicitly, $M_{0,n} = 1$ and $M_{2,n} = 1$). \end{lem}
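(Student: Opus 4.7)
The proof has two parts, one essentially trivial and one a one-line computation via the Eigenvalue Trace Lemma.

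For the zeroth moment, the measure $\mu_{n,A_N}$ is by construction a probability measure (a sum of $N$ point masses each of weight $1/N$), so for every $A_N$ in the ensemble
\[
M_{0,n;N}(A_N) \ = \ \int_{-\infty}^\infty d\mu_{n,A_N}(x) \ = \ \frac{1}{N}\sum_{i=1}^N 1 \ = \ 1.
\]
Averaging over $\Omega_{n,N}$ with respect to $\mathbb{P}_{n,N}$ gives $M_{0,n;N}=1$ for every $N$, and hence $M_{0,n}=\lim_{N\to\infty}M_{0,n;N}=1$.

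For the second moment, I would use the Eigenvalue Trace Lemma to rewrite
\[
M_{2,n;N}(A_N) \ = \ \frac{1}{N^{2}}\sum_{i=1}^N \lambda_i(A_N)^2 \ = \ \frac{1}{N^{2}}\,\mathrm{Tr}(A_N^2) \ = \ \frac{1}{N^{2}}\sum_{i,j=1}^N a_{ij}^{\,2}.
\]
Passing to the expectation over the ensemble and using $a_{ij}=b_{\psi(i,j)}$ together with the fact that each $b_\ell$ is drawn from a distribution of mean $0$ and variance $1$ (so $\mathbb{E}[b_{\psi(i,j)}^{2}]=1$ for every $i,j$), I would obtain
\[
M_{2,n;N} \ = \ \mathbb{E}\bigl[M_{2,n;N}(A_N)\bigr] \ = \ \frac{1}{N^{2}}\sum_{i,j=1}^N \mathbb{E}\bigl[b_{\psi(i,j)}^{2}\bigr] \ = \ \frac{1}{N^{2}}\cdot N^{2} \ = \ 1,
\]
valid for every $N$ (not merely in the limit). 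Taking $N\to\infty$ then yields $M_{2,n}=1$.

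There is no serious obstacle here: the only subtlety worth flagging is that the statement $M_{2,n;N}(A_N)=1$ holds only after averaging over the ensemble, since for a fixed $A_N$ the quantity $N^{-2}\sum a_{ij}^2$ is a sum of random variables with mean $1$ rather than being deterministically $1$ (unlike the zeroth moment, which is genuinely $1$ pointwise). The pointwise concentration of $M_{2,n;N}(A_N)$ around $1$ as $N\to\infty$ is a separate variance calculation that belongs to the convergence results of \S\ref{sec:converge}, not to this lemma.
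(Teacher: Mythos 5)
Your proof is correct and follows essentially the same path as the paper: the zeroth moment is $1$ because $\mu_{n,A_N}$ is by construction a probability measure, and the second moment is computed via the Eigenvalue Trace Lemma together with $\mathbb{E}[b_{\psi(i,j)}^2]=1$. Your closing remark is in fact a sharp observation the paper glosses over: the lemma's assertion that $M_{2,n;N}(A_N)=1$ holds "for all $A_N$" is, as you note, only true after taking the expectation (the paper's own proof indeed only computes $M_{2,n;N}=\mathbb{E}[M_{2,n;N}(A_N)]$, not the pointwise quantity), so you have correctly diagnosed a minor imprecision in the statement rather than introduced a gap.
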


\begin{proof}
From \eqref{eq:formulaMkNAN}, we see $M_{0,n,N}(A_N) = 1$. For the second moment, we have \bea M_{2,n;N} & \ = \ & \frac{1}{N^2} \sum_{1 \le i_1,i_2 \le N} \E(a_{i_1 i_2}\cdot a_{i_2 i_1}) \nonumber\\ & \ = \ & \frac{1}{N^2} \sum_{1 \le i_1,i_2 \le N} \E(a_{i_1 i_2}^2) \ = \ \frac{1}{N^2} \sum_{1 \le i_1,i_2 \le N} \E(b_{\psi(i_1,i_2)}^2). \eea Since we choose the $b$'s from a distribution with mean zero and variance 1, the expected value above is just the variance (which is $1$), and hence $M_{2,n;N} = 1$, which implies $M_{2,n} = \lim_{N\to\infty} M_{2,n;N} = 1$. \end{proof}

We now consider the general even moments. By Corollary \ref{cor:cortolemma26}, the only contributions to the moments $M_{2m,n;N}$ (see \eqref{eq:coseqetl}) that survive as $N\to\infty$ is when the $a_{i_j i_{j+1}}$'s are matched in pairs. There are $(2m-1)!!$ such matchings; we need to determine the contribution of each matching to $M_{2m,n;N}$.

Each of the $(2m-1)!!$ matchings, hereafter referred to as a \textbf{configuration}, leads to a system of $m$ equations of the form \eqref{eq:2.15} or \eqref{eq:2.16} (with the $C$'s coming from \eqref{eq:match1} and \eqref{eq:match2}), for which each distinct solution gives us one possible choice for the tuples $(i_1, \dots, i_{2m})$ and contributes one to the sum. The analysis is completed by counting how many valid configurations there are (or at least determining the main term).

Determining the exact value is complicated by the fact that there are many ways for an $a_{i_j i_{j+1}}$ and an $a_{i_v i_{v+1}}$ to be paired; they must correspond to the same $b_\ell$, but there are many diagonals each can lie on (with the number of diagonals growing with $n$). Fortunately, we can obtain a weak bound depending on $n$ that nevertheless suffices to prove the existence of a limiting spectral measure. By standard arguments, it suffices to show the average even moments converge as $N\to\infty$ to a sequence satisfying Carleman's condition, and then perform a similar analysis on the variance (for convergence in probability) or the fourth moment (for almost sure convergence). We leave the convergence issues to \S\ref{sec:converge}, and instead prove the existence of the limits.

\begin{lem}\label{lem:existenceevenmoments} For fixed $n$, $M_{2m,n}$ exists and \be M_{2m,n}\ =\ \lim_{N\to\infty} M_{2m,n;N} \le (2\cdot 2^n)^m (2m-1)!!, \ee which implies the $M_{2m,n}$ satisfy Carleman's condition.
\end{lem}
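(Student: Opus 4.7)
The plan is to combine the pair-matching reduction from Section 2 with a degree-of-freedom count. Starting from
\[
M_{2m,n;N} \ =\ \frac{1}{N^{m+1}} \sum_{1 \le i_1,\ldots,i_{2m} \le N} \E\bigl[b_{\psi(i_1,i_2)} \cdots b_{\psi(i_{2m},i_1)}\bigr],
\]
Corollary \ref{cor:cortolemma26} restricts the sum in the limit to tuples for which the $b_\ell$'s are matched exactly in pairs. There are at most $(2m-1)!!$ such matchings, which I call configurations. Because the $b_\ell$'s are i.i.d.\ with variance $1$, the expectation for any tuple respecting a configuration factors as a product of $m$ second moments, each equal to $1$.

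For each fixed configuration I count the number of valid index tuples. By Lemma \ref{thm:neg}, only the all-negative-sign assignment in \eqref{eq:matchingeqsigns} contributes to the limit; every other sign pattern contributes $O_m(1/N)$. For the all-negative assignment each of the $m$ pairs imposes an equation whose constant $C$ takes at most $2 \cdot 2^n$ admissible values, coming from the $2^n$ choices in each of \eqref{eq:match1} and \eqref{eq:match2}. I then book-keep the indices by walking through $i_1, i_2, \ldots, i_{2m}$ in order: $i_1$ contributes a factor $N$; at each subsequent step, if $a_{i_j i_{j+1}}$ is the opener of its pair then the new index $i_{j+1}$ contributes a further factor $N$, while if $a_{i_j i_{j+1}}$ is the closer then the matching equation together with the choice of $C$ determines $i_{j+1}$ up to $2\cdot 2^n$ possibilities. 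Since there are exactly $m$ openers and $m$ closers among the $2m$ entries, the total number of valid tuples for a single configuration is at most $N^{m+1} (2\cdot 2^n)^m$, plus lower-order terms in $N$.

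Summing over the $(2m-1)!!$ configurations and dividing by $N^{m+1}$ gives $M_{2m,n;N} \le (2\cdot 2^n)^m (2m-1)!! + O_{m,n}(1/N)$. Existence of the limit $M_{2m,n}$ follows from observing that, for each configuration, the exact count of valid tuples is asymptotic to a nonnegative constant times $N^{m+1}$ (standard Diophantine counting of lattice points in a bounded $m$-dimensional region, since the solution space of the negative-sign equations in the differences $\tilde{x}_j = i_j - i_{j+1}$ has dimension $m$). Finally, for Carleman's condition the crude estimate $(2m-1)!! \le (2m)^m$ yields $M_{2m,n}^{1/(2m)} \le C_n \sqrt{m}$ for a constant $C_n$ depending only on $n$, whence $\sum_{m\ge 1} M_{2m,n}^{-1/(2m)} \ge (1/C_n) \sum_{m\ge 1} m^{-1/2} = \infty$.

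The main technical obstacle, though routine here, is verifying that the number of admissible diagonals under the link function $\psi$ is indeed at most $2 \cdot 2^n$ per closing pair; this follows directly from Definition \ref{def:descriptionhighlytoepensemble} but must be carried out with care to ensure the weak bound is uniform in which congruence class $|i_l - i_{l+1}|$ lies in. Any attempt to sharpen this bound to identify $M_{2m,n}$ exactly is much harder and is deferred to the fourth-moment analysis of Section \ref{sec:fourthdouble}.
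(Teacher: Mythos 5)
Your proof is correct and takes essentially the same route as the paper's: reduce to the $(2m-1)!!$ pair-matching configurations via Corollary \ref{cor:cortolemma26} and Lemma \ref{thm:neg}, count $m+1$ free indices with at most $2\cdot 2^n$ constant choices per closing pair, and verify Carleman's condition by a crude factorial bound. The only differences are cosmetic: your opener/closer book-keeping makes the $m+1$ degree-of-freedom count explicit where the paper refers back to earlier analysis, you absorb the paper's separate treatment of not-uniquely-matched tuples (which contribute a $p_m^m\cdot O(N^m)$ error term) into the appeal to Corollary \ref{cor:cortolemma26}, and your bound $(2m-1)!! \le (2m)^m$ is a notch sharper than the paper's $(2m)^{2m}$, though either suffices.
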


\begin{proof} Fix $n$ and $m$. Consider one of the $(2m-1)!!$ pairings where the $\{a_{i_l i_{l+1}}\}_{l=1}^{2m}$ are matched in pairs. We have $m$ equations. For our system of equations we must choose $m$ values for the $C$'s, and in each equation by \eqref{eq:match1} and \eqref{eq:match2} there are at most $2 \cdot 2^n$ choices for a $C$. By our previous analysis, at most $m+1$ of the indices $i_1, \dots, i_{2m}$ are free. We must analyze the contribution from each choice of the indices in \eqref{eq:coseqetl}. Assume first each $a_{i_l i_{l+1}}$ is matched with a unique $a_{i_p i_{p+1}}$. In this case, the contribution of this choice of indices to \eqref{eq:coseqetl} is just the product of $m$ copies of the expected value of the second moment of the probability density $p$; as the second moment is 1, each of these adds 1 to \eqref{eq:coseqetl}, and there are clearly at most $N^{m+1}$ choices of indices.

If all of the $a_{i_l  i_{l+1}}$ are not uniquely matched with another $a_{i_p i_{p+1}}$, then it is possible to have a larger contribution than $1$ to \eqref{eq:coseqetl}, as the product of the expected values could involve fourth, sixth, eighth, ..., $2m$\textsuperscript{th} moments. Let $p_m$ be the maximum of the absolute values of the first $2m$ moments of $p$. The contribution in this case is at most $p_m^m$; while this is growing with $m$, the number of indices that can contribute this is at most $O(N^m)$ by our earlier arguments, as we showed we lose at least one degree of freedom when items are matched in  more than pairs.

As we divide by $N^{m+1}$ in \eqref{eq:coseqetl}, we find \be M_{2m,n;N} \ \le \ \frac{(2m-1)!! \cdot (2 \cdot 2^n)^m}{N^{m+1}} \left[ N^{m+1} \cdot 1 + O(N^m) \cdot p_m^m\right]. \ee Taking the limit as $N\to\infty$ yields \be M_{2m,n} \ = \ \lim_{N\to\infty} M_{2m,n;N} \ \le \  (2\cdot 2^n)^m \cdot (2m-1)!!. \ee

The existence of the limit is proved analogously to \cite{BB1,BDJ,HM,MMS} (see for example Theorem 2.6 of \cite{HM}); now that we know $M_{2m,n}$ is bounded, it is easy to see that the main term of the contribution from each possible configuration is independent of $N$.

It remains to show that the $M_{2m,n}$ satisfy Carleman's condition by showing the sum of the reciprocals of their $2m$\ts{th} roots diverge. Trivial estimation suffices. As $(2m-1)!! <  (2m)^{2m}$, we have $(2m-1)!!^{-1/2m} > 1/2m$, and thus \be \sum_{m} M_{2m,n}^{-1/2m} \ > \ \sum_m (2 \cdot 2^n)^{-1/2} \cdot \frac1{2m}. \ee The latter sum is a multiple of the harmonic sum and diverges, completing the proof.
\end{proof}













\section{Convergence}\label{sec:converge}

In \S\ref{sec:diophform} and \S\ref{sec:speccharhptm} we showed that the limit of the average moments exist as $N\to\infty$, culminating in Lemma \ref{lem:existenceevenmoments} where we proved that the moments grow slowly enough to uniquely determine a probability distribution. We now show convergence in probability, and if $p(x)$ is even we prove almost sure convergence. As these arguments closely follow those in \cite{HM,MMS}, we concentrate on the novelties introduced by the higher palindromicity. We conclude by obtaining lower bounds for the moments. These bounds imply that our limiting distributions have unbounded support and fatter tails than the standard normal (possibly the fattest tails observed from a random matrix ensemble arising from entries chosen independently from a distribution whose moment generating function converges in a neighborhood of the origin).

\subsection{Convergence in probability}

We will prove our probability measures converge in probability. We follow the arguments in \cite{HM, MMS}. We begin by defining our random variables. Let $A$ be a sequence of real numbers to which we associate an $N\times N$ real symmetric Toeplitz matrix with $2^n$ palindromes, which we denote by $A_N$. Thus we may view $A$ as $(b_0, b_1, b_2, \dots)$, and we form $A_N$ by considering the initial segment of length $N/2^{n+1}$, taking that as the first half of our palindrome, and then building the matrix by having $2^n$ palindromes in the first row.

Let $X_{m,n;N}(A)$ be a random variable that equals the $m\textsuperscript{th}$ moment of $A_N$ (so $X_{m,n;N}(A) = M_{m,n;N}(A_N)$), and set $M_{m,n;N}$ to the $m\textsuperscript{th}$ moment averaged over the ensemble as above (so $M_{m,n;N} = \E[X_{m,n;N}]$).

Thus, we have convergence in probability if for all $\epsilon>0$
\be
\lim_{N\to\infty}\mathbb{P}_{\N}(\{A\in\Omega_{\N}:|X_{m,n;N}-X_{m,n}|>\epsilon\})\ = \ 0.\ee
Using Chebyshev's inequality and the fact that ${\rm Var}(Y) = \E[(Y-\E[Y])^2] = \E[Y^2]-\E[Y]^2$, we have
\bea & &
\mathbb{P}_{\N}(\{A\in\Omega_{\N}:|X_{m,n;N}-\E[X_{m,n;N}]|>\epsilon\}) \nonumber\\ & & \ \ \ \ \ \ \ \ \ \ \ \ \ \ \ \ \le \ \frac{\E[(X_{m,n;N}-M_{m,n;N})^2]}{\epsilon^2}.\nonumber \\
& & \ \ \ \ \ \ \ \ \ \ \ \ \ \ \ \ = \ \frac{\E[X_{m,n;N}^2]-M_{m,n;N}^2}{\epsilon^2}. \eea
Thus, it suffices to show
\be\lim_{N\to\infty} (\E[X_{m,n;N}^2]-M_{m,n;N}^2)\ =\ 0\ee to prove convergence in probability.

We have
\bea \E[X_{m,n;N}^2] & \ = \ &
\frac{1}{N^{m+2}} \sum_{1 \le i_1,\dots,i_m \le N} \nonumber\\ & & \
\ \ \ \ \ \times \  \sum_{1 \le j_1,\dots,j_m \le N} \E[b_{|i_1-i_2|} \cdots b_{|i_m-i_1|} b_{|j_1-j_2|} \cdots b_{|j_m-j_1|} ] \nonumber,\\
M_{m,n;N}^2 & = & \frac{1}{N^{m+2}} \sum_{1 \le i_1,\dots,i_m \le N} \E[ b_{|i_1-i_2|} \cdots b_{|i_m-i_1|}] \nonumber\\
& & \ \ \ \ \ \ \times \ \sum_{1 \le j_1,\dots,j_m \le N}\mathbb{E}[b_{|j_1-j_2|} \cdots b_{|j_m-j_1|} ]. \eea

We can break this up into two cases.  If the entries of the $i$ diagonals are entirely distinct from those of the $j$ diagonals, then the contribution to $\E[X_{m,n;N}^2]$ and to $M_{m,n;N}^2$ will clearly be the same.  Thus, we need to approximate the contribution from the cases where there are one or more shared diagonals.  The degree of freedom arguments of \cite{HM} immediately apply here, though our big-Oh constants will now depend on the value of $2^n$ as we now have many more $C$-vectors to which we apply these arguments.  Thus, as $N\to\infty$ these two quantities will converge, and convergence in probability (and thus also weak convergence) follow.

\subsection{Almost Sure Convergence}

We assume that $p(x)$ is even for convenience, and use the same notation as above; in particular, \be M_{m,n}\ =\ \lim_{N\to\infty} M_{m,n;N} \ = \ \lim_{N\to\infty} \E[X_{m,n;N}(A)].\ee

Almost sure convergence follows from showing that as $N\to\infty$ the event
\[\{A\in\Omega_{\N}\ :\ \lim_{N\to\infty} X_{m,n;N}(A)\to M_{m,n}\}\] occurs with probability one for all non-negative integers $m$.

By the triangle inequality, we have that
\be |X_{m,n;N}(A) - M_{m,n}|\  \leq\ |X_{m,n;N}(A) - M_{m,n;N}| + |M_{m,n;N}-M_{m,n}|.\ee We have already shown that $\lim_{N\to\infty}|M_{m,n;N}-M_{m,n}|=0$, so we need only show that $|X_{m,n;N}(A) - M_{m,n;N}|$ almost surely tends to zero. Clearly  $\E[X_{m,n;N}(A) - M_{m,n;N}]=0$, and we can modify the arguments in \cite{HM} to show that the fourth moment of $X_{m,n;N}(A) - M_{m,n;N}$ is $O_{m,2^n}(\frac{1}{N^2})$. All of the degree of freedom arguments can be applied directly for each $C$-vector.

However, Theorems 6.15 and 6.16 of \cite{HM} require greater care as these use more than degree of freedom arguments. Fortunately, equations (50) and (51) in \cite{HM} hold for any of our $C$-vectors, so a similar result holds in this case. We then apply Chebyshev's inequality to find
\be \p_{n, \N}(|X_{m,n;N}(A) - M_{m,n;N}| \ge \gep) \ \le \ \frac{\E[|X_{m,n;N}(A)
- M_{m,n;N}|^4]}{\gep^4} \ \le \ \frac{C_{m,2^n}}{N^2 \gep^4}. \ee

Finally, applying the Borel-Cantelli Lemma shows that we have convergence everywhere except for a set of zero probability, thus proving almost sure convergence.






\section{Adjacent Matchings, Even Moments and the Tail}\label{sec:fourthmomentandfattertails}

As any distribution with finite mean and variance can be normalized to have mean 0 and variance 1, if the distribution is even then the fourth moment is the first moment to show the `shape' of the distribution, and thus merits special consideration. We analyze the fourth moment in detail below. We first prove  that the adjacent and non-adjacent matching configurations contribute equally. We then compute the contribution from the adjacent case in \S\ref{sec:fourthdouble}. We are able to compute the contribution from the adjacent case in the doubly palindromic case for any moment; if all configurations contributed equally (which we believe is the case) then the $2m$\textsuperscript{th} moment would just be $(2m-1)!!$ times the contribution from the adjacent configuration.

\subsection{Fourth Moment Configurations}

\begin{lem}[Equal Contribution - Fourth Moment]\label{thm:4momVP} The non-adjacent configuration and the adjacent configuration contribute equally to the fourth moment. \end{lem}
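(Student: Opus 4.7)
The plan is to enumerate the three pair matchings contributing to the fourth moment, set up their Diophantine systems under Lemma \ref{thm:neg}, and show that the adjacent and non-adjacent configurations admit the same leading-order count of admissible tuples.

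The three pair matchings of $\{a_{i_1i_2}, a_{i_2i_3}, a_{i_3i_4}, a_{i_4i_1}\}$ are the two adjacent ones $(1,2)(3,4)$ and $(2,3)(4,1)$, which are interchanged by the cyclic relabeling $i_j \mapsto i_{j+1}$ and hence contribute equally, and the non-adjacent matching $(1,3)(2,4)$. It therefore suffices to compare one adjacent matching with the non-adjacent one. Applying \eqref{eq:2.15}--\eqref{eq:2.16} with all negative signs, the adjacent system $(1,2)(3,4)$ collapses to $i_1 - i_3 = C_1$ and $i_3 - i_1 = C_2$ with compatibility $C_1 + C_2 = 0$, leaving the free indices $(i_2, i_3, i_4)$ and the diagonal parameter $C_1$. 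The non-adjacent system $(1,3)(2,4)$ collapses to $i_1 - i_2 + i_3 - i_4 = C_1$ together with its negative equal to $C_2$, again with $C_1 + C_2 = 0$, leaving the free indices $(i_1, i_2, i_3)$ and one diagonal parameter. In both cases we are counting tuples in $\{1,\ldots,N\}^4$ satisfying a single linear relation with $\pm 1$ coefficients equal to an admissible $C_1$ drawn from the palindromic families \eqref{eq:match1}--\eqref{eq:match2}.

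To show these two counts agree at leading order I would invoke the pairing device at the end of \S\ref{sec:diophform}: each $C_1$ admits a partner $C_1'$ with $C_1 + C_1' = N - 1$ whose associated diagonal fills in the index range excluded by $C_1$, so that the union covers $\{1,\ldots,N\}$ in each coordinate. For the adjacent system, summing over the $C$-pair turns the constraint $i_1 = i_3 + C_1$ into $N$ choices of $i_3$ (times $N^2$ free choices of $(i_2, i_4)$), yielding $N^3$ tuples per $C$-pair. For the non-adjacent system, applying the same pairing to the constraint $i_4 = i_1 - i_2 + i_3 - C_1$ likewise yields $N^3$ tuples, because the boundary defect in the free linear combination $i_1 - i_2 + i_3$ is killed by its complement under $C_1 \mapsto C_1'$. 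Since the set of admissible $C$-pairs is determined solely by the link function $\psi$ and is therefore identical for both matchings, dividing by $N^3$ and letting $N \to \infty$ gives equal contributions.

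The main obstacle is verifying that the boundary defect of the three-variable linear form in the non-adjacent case truly cancels under the same $C$-pairing that works for the one-variable form in the adjacent case. The key observation is that in both cases the defect is governed by a one-dimensional overflow count, namely the number of values of the relevant signed sum of free indices that lie outside $\{1,\ldots,N\}$, and this count is symmetric under $C_1 \leftrightarrow C_1'$ by construction of the pairing. Once this symmetry is checked, summing over the $C$-pairs removes the defect in both systems and the two configurations contribute equally to $M_{4,n}$, proving the lemma.
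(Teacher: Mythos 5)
Your high-level plan matches the paper: compare one adjacent matching (the two are cyclically equivalent, as the paper's Figure~\ref{fig:1} caption notes) against the non-adjacent one, write down the two Diophantine systems using Lemma~\ref{thm:neg}, and invoke the complementary $C$-pair device $C_1+C_2=N-1$ from the end of \S\ref{sec:diophform}. The systems you write down, $i_1-i_3=C_1$ with $C_1+C_2=0$ for the adjacent case and $i_1-i_2+i_3-i_4=C_1$ for the non-adjacent case, are also correct.

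The gap is in the counting. You claim that in the adjacent system, once the $C$-pair is summed, there are $N$ choices of $i_3$ times ``$N^2$ free choices of $(i_2,i_4)$,'' giving $N^3$ tuples per $C$-pair. But $i_2$ and $i_4$ are \emph{not} free. Passing from $|i_1-i_2|=\pm|i_2-i_3|+C_1$ to the equation $i_1-i_3=C_1'$ only works once the absolute values are resolved, and Lemma~\ref{lem:old221222} shows this resolution carries inequality constraints involving $i_2$ (for instance $i_2<i_1<i_2+C_2$ or $i_2-C_2<i_1<i_2$). Likewise the second pairing imposes constraints on $i_4$ that depend on $i_1,i_3$ and the diagonal constant. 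Consequently the per-$(C_1,C_2)$ contribution is not $N^3$; the paper computes it to be $N^2\cdot C_2$, which depends on the pair, and it is only after summing the $N^2 C_2$ values over all admissible pairs that one gets $O(2^n\cdot N^3)$, consistent with $M_{4,n}\approx 2^{n+1}$. Your ``boundary defect'' remark correctly flags where the hard work is, but the defect you would need to cancel is exactly the variable factor $C_2$, not a lower-order term, so the claim that both sides yield $N^3$ per pair cannot be repaired by the symmetry $C_1\leftrightarrow C_1'$ alone.

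The paper's proof proceeds by fixing the free entry $a_{ij}$ ($N^2$ choices, with $i>j$ to fix orientation), splitting the remaining constant $A$ into the $C_1$-form and $C_2$-form of \eqref{eq:match1}--\eqref{eq:match2}, and explicitly tracking the admissible range of the constrained index $l$ via the intervals $\{a,\dots,b\}$ and their complements on the paired diagonals; this gives the $N^2\cdot C_2$ count per pair. It then asserts the adjacent case gives the same per-pair count by a parallel case analysis. To complete your argument you would need to actually carry out that analysis (for both configurations), verify that the set of admissible pairs $(C_1,C_2)$ is the same, and show the two per-pair counts agree, rather than collapsing the count to $N^3$ per pair by treating constrained indices as unconstrained.
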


\begin{figure}[htb]
\begin{center}
\scalebox{.6}{\includegraphics{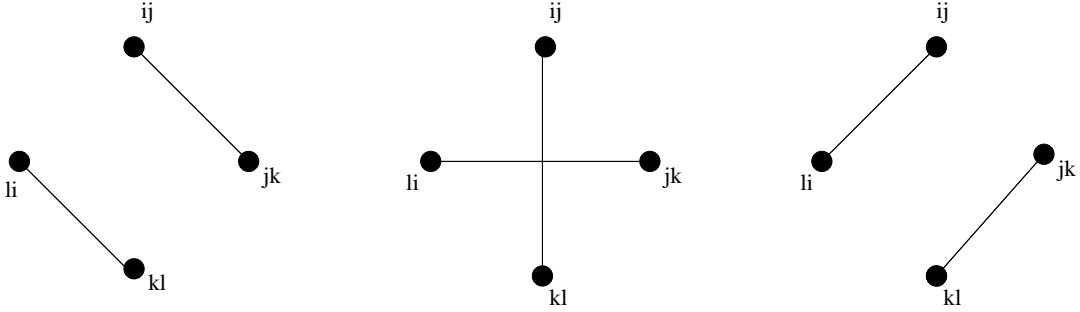}}
\caption{\label{fig:1}The adjacent and the non-adjacent configurations of the fourth moment. By relabeling we see the first and third are equivalent.}
\end{center}\end{figure}

\begin{proof} For brevity and notational simplicity, we will only present the calculation for the non-adjacent case. The adjacent case can be analyzed analogously and yields the same main term. That is, we consider the configuration with the following matchings:
\be
\begin{cases}
a_{ij} \ =\ a_{kl} \\
a_{jk}\ =\ a_{li}.\\
\end{cases}
\ee
From the above system of equations relating the matchings, we obtain the corresponding system of equations for the indices:
\be
\begin{cases}
|i - j|\ =\ \pm |k - l|  + A \\
|j - k|\ =\ \pm |l - i|  + B. \\
\end{cases}
\ee
Applying Lemma \ref{thm:neg}, we need only consider the case where
\be
\begin{cases}
i - j\ = \ - (k - l)  + A^{'} \\
j - k\ =\  - (l - i)  + B^{'} \\
\end{cases}
\ee where either $A^{'} = A$ or $A^{'} = - A$, and similarly for $B^{'}$, depending on how the absolute value equations resolve. Moreover, we see that
\be A^{'} + B^{'}\ =\ i - j + k - l + j - k + l - i\ =\ 0. \ee
This implies that $A$ and $B$ must be of the same form, either as in \eqref{eq:match1} or \eqref{eq:match2}. If $A$ is of the form $C_2$ in \eqref{eq:match2}, then it follows immediately that $A = B$, whereas if $A$ is of the form $C_1$ in \eqref{eq:match1}, then it can either be that $A = B$ or $A = -B$. For each $A$ we have a system of two equations with four unknowns so we can always pick at least two free indices. For convenience, we specify $i$ and $j$ as these free indices by choosing $a_{i j}$. Moreover, we assume that we only pick $a_{i j}$ in the lower diagonal half of the matrix so that $i > j$. By the symmetry of the matrix, picking $a_{i j}$ in the upper diagonal half would follow contribute equally.

We first consider the case where $A$ is of the form $C_2$ in \eqref{eq:match2}, and thus $A = B = C_2$ for some $C_2$. We then find
\begin{equation}
\begin{cases}
| k - l | \ = \ - | i - j | + C_2 \\
|j - k| \ = \ - |l - i|  + C_2 \\
\end{cases}
\Longrightarrow
\begin{cases}
k - l \ = \ - (i - j) + C_2 \\
k > l \\
j - k \ = \ - (l - i) - C_2\\
i - C_2 < l < i.\\
\end{cases}
\end{equation}

We now consider $A$ of the form $C_1$ in \eqref{eq:match1} where $C_1 + C_2= N -1$. The value $C_1$ is unique for each choice of $C_2$ and the contribution from the pair $(C_1, C_2)$ complements nicely one another as we show below. We see that $A = \pm B = C_1$. We then have
\begin{equation}
\begin{cases}
| k - l | \ = \ | i - j | + C_1 \\
|j - k| \ = \   |l - i|  \pm C_1\\
\end{cases}
\Longrightarrow
\begin{cases}
k - l \ = \ - (i - j) - C_1\\
k < l \\
j - k \ = \ - (l - i) + C_1\\
l < i \textbf{ or } l >  i + C_1. \\
\end{cases}
\end{equation}

Since we have already picked the first entry $a_{i j}$, we are left to choose the entry $a_{k l}$. Our choice of $C_1$ (or complementary $C_2$ since the pair is unique) indicates the diagonals that $a_{li}$ lies on, which gives us the restrictions on $\ell$. Finally, as only one of the indices $k$ or $l$ need to be specified (since the other is restricted by the diagonal), without loss of generality we choose $l$. We now use our previous analysis from \eqref{eq:oldeq220} and Lemma \ref{lem:old221222} to analyze the diagonals associated to $A=C_1$ and $A=C_2$. Since except for the main diagonal, every other diagonal has less than $N$ entries, if we choose $a_{k l}$ on the diagonal $A=C_2$, then there exist integers $a, b \in \{1, \dots, N \}$ such that $a \leq l \leq b$ and either $a$ or $b$ must be $N$. If we choose $a_{k l}$ on the diagonal $A=C_1$, then $l \leq a$ or $b \leq l$.

\begin{enumerate}

\item On the diagonal associated with $A = C_2$:
\be l \in \{i-C_2, \dots,  i\} \cap \{a, \dots, b\}. \ee

\item On the diagonal associated with $A = C_1$:
\be l \in (\{0,\dots,i\} \cup \{i+C_1, \dots, N\}) \cap (\{0, \dots, a\} \cup \{b, \dots, N\}). \ee

\end{enumerate}

Therefore, there are exactly $C_2$ out of $N+1$ values of $i$ we can pick (or exactly $C_1$ out of $N+1$ value of $i$ we cannot pick).  Since we have $N^2$ choices for picking the initial entry $a_{i j}$, the contribution to the fourth moment from the pair $(C_1, C_2)$ is given by
\be \label{thm: form1} N^2 \cdot C_2 \ = \ \left(\frac{N^3}{2^n}\right) \left(- \bigg\lfloor \frac{|i_1 - i_{2}|}{2^n} \bigg\rfloor + k \right). \ee
This contribution only depends on the initial choice of $a_{i j}$ and the choice of $A$.

Repeating this analysis for the adjacent case and summing over all possible choices of $A$ of the form $C_1$, we obtain the same contribution to the fourth moment from either configuration.
\end{proof}

\subsection{The Adjacent Case}\label{sec:adjacent}

In this section we analyze the Adjacent Case in detail, as this configuration is easier to study and more easily generalized. Since the contributions to the fourth moment from the adjacent and non-adjacent configurations are identical, this allows us to calculate the fourth moment for any number of palindromes. We can also calculate the contributions in the general Adjacent Case for \emph{any} even moments for the doubly palindromic Toeplitz matrix. In principle we could use the same ideas to calculate by brute force the adjacent configurations of any even moment of an ensemble with a greater number of palindromes, but we could not find a closed form expression for these moments. We conjecture that all configurations contribute the same main term in the limit, and provide numerical support in Appendix \ref{sec:numerics}.

\subsubsection{Determining the Fourth Moment}\label{sec:fourthdouble}

For the fourth moment, we have four indices $i,j,k,$ and $l$, and we consider an adjacent matching where
\[a_{ij} = a_{jk},\hspace{10 mm}a_{kl} = a_{li}.\]

We think about this as follows. A pair $i$ and $j$ gives us a matrix element $a_{ij}$; we want to find all pairs $j$ and $k$ such that $a_{ij} = a_{jk}$. This could happen by having the two on the same diagonal, or it could happen that $a_{jk}$ is on a palindromically equivalent diagonal. As the formula for the fourth moment of our matrix $A_N$ involves division by $N^3$, we need only worry about situations where we have on the order of $N^3$ tuples. Clearly we may choose $i$ and $j$ freely. The matching then forces there to be on the order of 1 choice for $k$ (the exact answer depends on $n$, the degree of palindromicity; what matters is that the answer is independent of $N$ in the limit), and on the order of $N$ choices for $l$. The last is important, as unless the number of choices of $l$ is proportional to $N$, we will obtain a negligible contribution from the matching $a_{ij}=a_{jk}$ and $a_{kl}=a_{li}$. Exploiting the symmetry of the matrix, this reduces to choosing $k$ so that $a_{ij}=a_{kj}$ and $a_{kl}=a_{il}$. That is, in addition to matching $a_{ij}$ and $a_{kj}$we want row $i$ and row $k$ to match well.

\begin{figure}[htb]
\begin{center}
\scalebox{.7}{\includegraphics{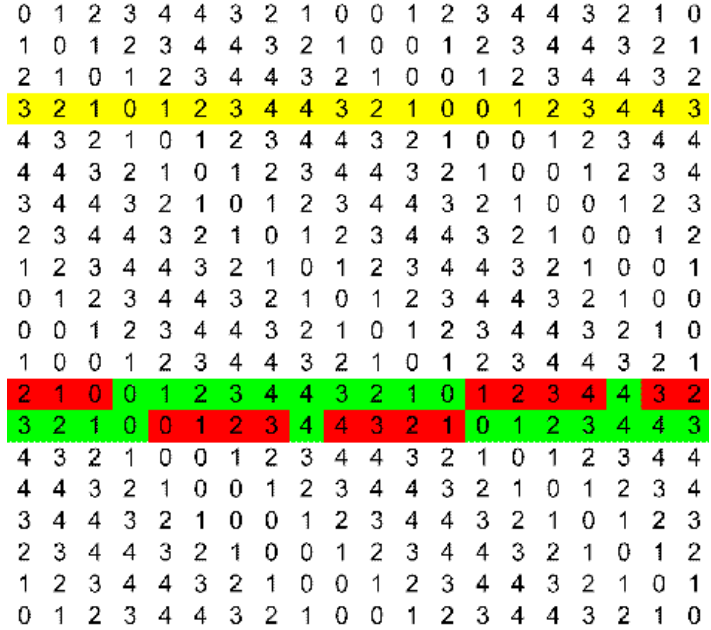}}
\caption{\label{fig:adjrows} An example highlighting matchings for $l$ in medium shading, with mismatching in dark shading. Note that any anomalous matchings won't contribute in the limit.}
\end{center}\end{figure}

We isolate some of the most useful features of our matrices in the following lemma. The proof follows immediately from the previous discussions and the structure of the matrices in our ensemble.

\begin{lem}\label{thm:properties} Fix $n$ and consider the ensemble of $N\times N$ real symmetric palindromic Toeplitz matrices with $2^n$ palindromes in the first row. The main diagonal is the only place (excluding the border of the matrix) where $b_0$ occurs once rather than twice. This implies the following useful properties.

\bi

\item Moving to the corresponding point in the next palindrome can require either moving $N 2^{-n} - 1$ elements when crossing the main diagonal or $N 2^{-n}$ elements otherwise.

\item Let $c\in\{1,2,\ldots,2^{-n}-1\}$. As pictured in Figure \ref{fig:adjrows}, a given row and the row $cN2^{-n}$ rows down from that given row do not match perfectly, but rather become unaligned when one row has reached the main diagonal but the other row has not. Moreover, the row $cN2^{-n} -1$ rows down starts out unaligned, but then becomes aligned in this same region. Furthermore, only rows of this form match up well with the original row.

\ei

\end{lem}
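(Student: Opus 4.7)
The plan is to verify both bullets by unpacking the link function $\psi$ from Definition 1.1 and tracking how entries repeat within a single row and between pairs of rows. By construction, a row of $A_N$ consists of $2^n$ concatenated palindromes, each of length $N/2^n$. Every $b_\ell$ with $\ell \neq 0$ appears exactly $2^{n+1}$ times per row (twice per palindrome, by the reflection symmetry), while $b_0$ appears only once on the main diagonal but twice at each palindrome boundary that does not coincide with it. This asymmetric behaviour of $b_0$ is the entire source of the ``$-1$'' corrections in both bullets, and should be set up as a preliminary observation.

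For the first bullet, I would fix a row $i$ and a position $(i,j)$ containing $b_\ell$, and locate the nearest position to the right in the next palindrome block that also contains $b_\ell$. If the horizontal segment of length $N/2^n$ starting at $(i,j)$ does not contain column $i$ (the main diagonal), then the palindromic periodicity along the row places the next copy of $b_\ell$ exactly $N/2^n$ positions away. If, instead, this segment straddles the main diagonal, then the single occurrence of $b_0$ on the diagonal effectively ``compresses'' the block by one entry, so the next matching $b_\ell$ appears at distance $N/2^n - 1$. Both behaviours can be read off from the explicit example in \eqref{eq:defrsptmat} and confirmed directly from $\psi$.

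For the second bullet, I would fix a row $i$ and compare it to the row $i + d$, using $a_{i+d,\,j} = b_{\psi(i+d,\, j)}$. The two rows agree at column $j$ precisely when $\psi(i,j) = \psi(i+d,\, j)$, which reduces to a statement about the differences $|i-j|$ and $|i+d-j|$ modulo $N/2^n$ together with the reflection inside each block. A shift $d = cN/2^n$ preserves the congruence class of $|i-j| \bmod (N/2^n)$ throughout, so the two rows match on every $j$ except on the interval between their two main-diagonal positions (columns $i$ and $i+d$), where exactly one row has crossed its diagonal; this produces precisely the middle ``misaligned'' band described in the lemma. A shift $d = cN/2^n - 1$ is symmetric in behaviour: it starts out unaligned because the two main-diagonal positions have not yet both been passed, but realigns once they have. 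To establish the final ``furthermore'' clause, I would note that any $d$ outside these two families alters the congruence class of $|i-j| \bmod (N/2^n)$ at all but $O(1)$ columns, so only $O(1)$ matches are possible — negligible relative to the $\Theta(N)$ matches that the two privileged shift families yield.

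The main obstacle is purely bookkeeping: one must carefully track the transition region near the main diagonals of the two rows (where the palindromic reflection flips between the two branches of $\psi$), and then verify that every shift $d$ not of the form $cN/2^n$ or $cN/2^n - 1$ yields only $O(1)$ agreements. Once these case distinctions are made cleanly, both bullets follow from the definition of $\psi$ and the observation about $b_0$ recorded at the start.
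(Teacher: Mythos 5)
Your overall strategy mirrors the paper's: set up the special behaviour of $b_0$ on the main diagonal, then compare rows $i$ and $i+d$ via the link function $\psi$, distinguishing the shifts $d = cN/2^n$ and $d = cN/2^n - 1$ from all others. The treatments of the first bullet and of the two privileged shift families are fine.

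However, the argument you give for the ``furthermore'' clause has a genuine gap. You claim that any $d$ outside the two families ``alters the congruence class of $|i-j| \bmod (N/2^n)$ at all but $O(1)$ columns, so only $O(1)$ matches are possible.'' But a match $\psi(i,j) = \psi(i+d,j)$ does \emph{not} require $|i-j|$ and $|i+d-j|$ to be in the same residue class mod $N/2^n$: by the two-branch definition of $\psi$, entries also agree when the two residues are reflections of one another within the palindrome block (roughly, when $|i-j| + |i+d-j| \equiv 0 \pmod{N/2^n}$). Your reasoning as written dismisses this entirely. To close the gap you need to argue, as the paper does, that if the two rows agree at a reflected position then they subsequently march through the palindrome in \emph{opposite} directions, so apart from the $O(1)$ ``special'' repeating entries (like $b_0$ at block boundaries and block centres) they immediately diverge again; this pins the number of such anomalous agreements to $O(1)$ per period and hence $O_n(1)$ overall. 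Equivalently, one can observe that the reflection condition $|i-j| + |i+d-j| \equiv 0 \pmod{N/2^n}$ is, for fixed $d$, a congruence constraint on $j$ itself, which has only $O(1)$ solutions per period of length $N/2^n$ and thus $O_n(1)$ solutions across the row. Either route fills the hole; without one of them the ``only $O(1)$ matches'' conclusion does not follow from the premise about congruence classes.
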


\begin{proof} The first item follows directly from the observation that the main diagonal is the only place where $b_0$ appears once rather than consecutively. We also see that, neglecting the first row which starts on the main diagonal, that the first elements of a row and one $cN2^{-n}$ rows away match initially. Moreover, they evolve the same way when moving from left to right, except when the first one hits the main diagonal, in which case it skips forward one place in the palindrome, in which case they do not match except possibly for repeating elements at the beginning/end or middle of palindromes, like $b_0$. However, once the second row hits the main diagonal, it also skips forward, and they become realigned. The case for rows $cN2^{-n} - 1$ rows away from each other is argued similarly.

To prove that no other rows match sufficiently well we need to show that there are only $O_n(1)$ matchings in any of the other rows. Suppose we do have a matching in one of the other rows. Since we can't be at the corresponding point in the palindrome, we must be at the other end of the palindrome. Unless these are the special repeating elements at the beginning or middle of a palindrome these two rows will evolve differently, so although there may be additional anomalous matchups, there will certainly not be more than four per palindrome, giving us the desired maximum of $O_n(1)$ possible matchings. If they are the special repeating elements, then the rows can match up well, but in this case there are at most five such elements per row, so we again have a lower order term.
\end{proof}

With this lemma in hand, we can now calculate the contribution from a specific constant, which will then allow us to calculate the contribution from the adjacent configuration.

\begin{lem} Let $c\in\{0,1,\ldots,2^{n-1}\}$ and $k = i + cN2^{-n}$. There are then
\be\left(\frac{2^n-c}{2^n}\right)^3 N^3+O_n(N^2)\ee good matchings, whereas if $k = i+ cN2^{-n}-1$, then there are
\be\frac{2^n c^2 - c^3}{2^{3n}} N^3 + O_n(N^2)\ee good matchings, where the big-Oh constants depend on $n$ (which is fixed).\footnote{The constants may be taken to depend on $c$ as well; however, as $n$ is fixed and $c \in \{0, \dots, 2^{n-1}\}$, we may take the maximum of all the constants and may replace $c$ dependence with $n$ dependence.}
\end{lem}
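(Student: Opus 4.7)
The strategy is to reduce the counting of good $(i,j,k,l)$ to a simpler counting of pairs of columns where two fixed rows of $A_N$ agree, and then to read off the row-alignment structure from Lemma \ref{thm:properties}. Using the symmetry $a_{ab} = a_{ba}$, the adjacent matching conditions $a_{ij} = a_{jk}$ and $a_{kl} = a_{li}$ become $a_{ij} = a_{kj}$ and $a_{il} = a_{kl}$; i.e., rows $i$ and $k$ must agree at columns $j$ and $l$. Since the roles of $j$ and $l$ decouple, for each valid $i$ (one with $1 \le k \le N$) the number of admissible pairs $(j,l)$ is $M(i,k)^2$, where
\[
M(i,k)\ :=\ \#\{ j \in \{1,\dots,N\} : a_{ij} = a_{kj}\}.
\]
The total count of good matchings for the given offset is therefore $\sum_{i}M(i,k)^2$ with $i$ running over valid starting rows.

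I would then invoke Lemma \ref{thm:properties} to pin down $M(i,k)$ up to an $O_n(1)$ error independent of $i$ and $N$. For $k = i + cN2^{-n}$: the two rows start in perfect alignment (their below-diagonal distances differ by the full period $cN/2^n$, which folds to the same palindromic class), become unaligned in the strip $i < j < k$ where one row has crossed the main diagonal while the other has not, and realign for $j \ge k$. The unaligned strip has length $cN/2^n$, and in it only the $O_n(1)$ exceptional coincidences permitted by Lemma \ref{thm:properties} (the repeating $b_0$'s at palindrome boundaries and the middle element of a palindrome) can contribute matches. Hence
\[
M(i,k)\ =\ N - cN/2^n + O_n(1)\ =\ \frac{2^n-c}{2^n}N + O_n(1).
\]
For $k = i + cN2^{-n} - 1$ the off-by-one shift swaps the roles: outside the strip the palindromic indices of rows $i$ and $k$ differ by $1$ and so fail to match except at $O_n(1)$ exceptional columns, while throughout $i \le j < k$ they agree (the $-1$ compensates precisely for the intervening main-diagonal crossing); this yields $M(i,k) = cN/2^n + O_n(1)$.

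Finally, the number of valid starting rows $i$ equals $N - cN/2^n + O(1) = (2^n - c)N/2^n + O(1)$ in both cases, since the only constraint is $1 \le k \le N$. Multiplying (number of $i$)~$\times M(i,k)^2$ and collecting error terms gives
\[
\sum_i M(i,k)^2\ =\ \Big(\tfrac{2^n-c}{2^n}N + O(1)\Big)\cdot M(i,k)^2,
\]
which expands to $\big((2^n-c)/2^n\big)^3 N^3 + O_n(N^2)$ in the first case and to $(2^n-c)c^2 N^3/2^{3n} + O_n(N^2) = (2^n c^2 - c^3)N^3/2^{3n} + O_n(N^2)$ in the second. The hard part, where I expect to spend the most effort, is the application of Lemma \ref{thm:properties}: one must verify that the number of accidental column matches outside the predicted alignment region really is $O_n(1)$ rather than growing with $N$, and this requires a careful case analysis of which diagonals become palindromically equivalent after the main-diagonal crossing, together with boundary bookkeeping for rows $i$ near $1$ or $N$.
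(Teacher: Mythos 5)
Your proposal is correct and follows essentially the same approach as the paper: both reduce the count to how many columns rows $i$ and $k$ agree in, both invoke Lemma \ref{thm:properties} for the alignment/misalignment structure around the main-diagonal crossing, and both obtain the same main terms. Your factorization $\sum_i M(i,k)^2$ is a slightly cleaner algebraic repackaging of the paper's geometric description (the two triangles forming a square, then the parallelogram, times the $l$-count per pair); they agree because $M(i,k)$ depends only on the offset $k-i$, which is implicit in both arguments.
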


\begin{proof} We begin by noting that by Lemma \ref{thm:properties} above, choosing $k$ so that $a_{ij}$ and $a_{kj}$ are at corresponding points in a palindrome guarantees that $a_{ij} = a_{kj}$ and that there are $O_n(N)$ choices of $l$ satisfying $a_{kl} = a_{il}$, as desired. Moreover, if $a_{ij}$ and $a_{kj}$ aren't at corresponding points in the palindrome, then there are only $O_n(1)$ good choices of $l$, and since there are at most $n$ such possible cases, this contribution can be ignored. Thus, we only consider the cases where $a_{ij}$ and $a_{kj}$ are at corresponding places in a palindrome.

We now consider the case when $k = i + cN2^{-n}$, hence $a_{ij}$ and $a_{kj}$ must be on the same side of the main diagonal in order to match. Moreover, to have $k\in\{1,2,\ldots,N\}$ we must have $i\in\{1,2,\ldots,N - cN2^{-n}\}$. Another restriction arises from the fact that they are on the same side of the main diagonal. We note that we won't cross the main diagonal when moving down from any $a_{ij}$ below the main diagonal to $a_{kj}$. There will similarly be no crossing if $a_{ij}$ lies more than $cN2^{-n}$ elements above the main diagonal. This defines two right-triangular regions of height $N-cN2^{-n} +O_n(1)$, which in total gives a square of area
\be\left(\frac{2^n-c}{2^n}\right)^2 N^2+O_n(N)\ee from which to choose $a_{ij}$, thus giving that many valid choices of $a_{ij}$. We also have the restriction on the values of $l$ as explained in Lemma \ref{thm:properties}, leaving $N - cN2^{-n} + O_n(1)$ good values of $l$ for each of these $a_{ij}$. In total $cN2^{-n}$ contributes
\be\left(\frac{2^n-c}{2^n}\right)^3 N^3+O_n(N^2)\ee matchings to the fourth moment.

Next we consider the case where we cross the main diagonal when moving from $a_{ij}$ to $a_{kj}$, so that $k = i + cN2^{-n} -1$. In this case, the area of values of $a_{ij}$ from which we will cross the diagonal to give a matching will be mostly defined by the parallelogram bordered by the triangles from the previous constant. However, there may also be additional strips as depicted in light shading in Figure \ref{fig:adjareas}, but these will only be of width $1$, so the area is essentially that of the parallelogram  of height $N - cN 2^{-n} + O_n(1)$ and width $cN 2^{-n} + O_n(1)$. There will also be $cN 2^{-n} + O_n(1)$ good values of $l$, so in all this constant contributes
\be\left(\frac{2^n c^2 - c^3}{2^{3n}}\right) N^3 + O_n(N^2)\ee matchings.\end{proof}

\begin{figure}[htb]
\begin{center}
\scalebox{.7}{\includegraphics{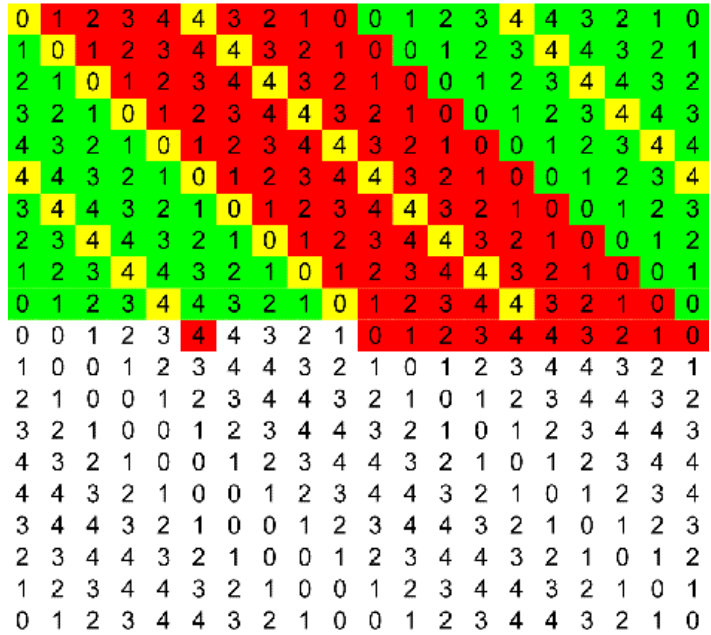}}
\caption{\label{fig:adjareas} Regions where $k = i + \frac{N}{2}$ gives a matching are indicated in medium shading, whereas those where $k = i + \frac{N}{2} - 1$ are indicated in dark shading. Regions where both are satisfied are indicated in light shading: These are $1-$dimensional, and thus won't contribute in the limit.}
\end{center}\end{figure}

We argue similarly for the negative constants $\{-cN2^{-n}, -(cN2^{-n}-1)\}$ for $c\in\{1,2,\ldots, 2^n - 1\}$, in which case we are now moving up $c$ palindromes, and either crossing or not crossing the mian diagonal, respectively. We easily see that this is essentially switching the roles of $a_{ij}$ and $a_{jk}$, so the contributions will be the same. If we repeat the analysis above we find regions of identical size that thus give identical contributions to the fourth moment.  Pictorially, what happens for a negative constant is that of the positive one rotated $180$ degrees. Thus, the contribution to the fourth moment will be given by the contributions from the positive constants ($\{cN2^{-n}, cN2^{-n}-1\}$ for $b\in\{1,2,\ldots, 2^n - 1\}$ multiplied by a factor of $2$ to account for the negative constants.

\begin{thm}\label{lem:contr4thmomentgeneraln} Fix $n$ and consider the limit as $N\to\infty$ of the average fourth moment of our ensemble. The contribution from one of the adjacent matching configurations (i.e., $a_{ij}=a_{jk}$ and $a_{kl}=a_{li}$) to this limit is
\be\label{eq:4thmom}\frac{2}{3}2^n +\frac{1}{3}2^{-n}.\ee Since all configurations contribute the same main term by Lemma \ref{thm:4momVP}, we have
\be\label{eq:4thmomtot} M_{4,n}\ =\ 2^{n+1} + 2^{-n},\ee which is asymptotic to $2^{n+1}$ as $n\to\infty$.
\end{thm}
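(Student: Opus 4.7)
The strategy is to package the counting lemmas that precede the theorem into a single sum, evaluate it in closed form, and then invoke Lemma \ref{thm:4momVP} (together with the combinatorial count $(2m-1)!! = 3$ for $m=2$) to promote the adjacent contribution to the full fourth moment. Concretely, by Lemma \ref{thm:properties} the only rows $k$ that produce $\Theta(N)$ matchings of $a_{kl}=a_{il}$ are those with $k-i \in \{\pm cN2^{-n}, \pm(cN2^{-n}-1)\}$; every other choice contributes at most $O_n(1)$ matches and thus $O_n(N^2)$ total, which is negligible after we divide by $N^3$. So the adjacent configuration's contribution is the $N\to\infty$ limit of $N^{-3}$ times the sum of the counts provided by the preceding lemma.

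Writing $s = 2^n$ for brevity, the previous lemma gives the count $\bigl(\tfrac{s-c}{s}\bigr)^{3} N^{3} + O_{n}(N^{2})$ for $k = i + cs^{-1}N$ with $c \in \{0,1,\dots,s-1\}$, and $\tfrac{s c^{2}-c^{3}}{s^{3}} N^{3} + O_{n}(N^{2})$ for $k = i + cs^{-1}N - 1$ with $c \in \{1,\dots,s-1\}$. The argument at the end of Section \ref{sec:adjacent} shows that the negative constants $-cs^{-1}N$ and $-(cs^{-1}N-1)$ contribute identically (for $c \in \{1,\dots,s-1\}$). Collecting everything and dividing by $N^{3}$, the adjacent contribution equals
\begin{equation*}
\sum_{c=0}^{s-1}\left(\frac{s-c}{s}\right)^{3} + \sum_{c=1}^{s-1}\left(\frac{s-c}{s}\right)^{3} + 2\sum_{c=1}^{s-1}\frac{c^{2}(s-c)}{s^{3}},
\end{equation*}
plus $O_{n}(1/N)$ error. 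The first two sums reindex (via $j = s-c$) to $\sum_{j=1}^{s} j^{3}/s^{3}$ and $\sum_{j=1}^{s-1} j^{3}/s^{3}$, evaluated by the standard formula $\sum_{j=1}^{r} j^{3} = r^{2}(r+1)^{2}/4$; the third sum is handled by $\sum c^{2} = r(r+1)(2r+1)/6$ and the same cubic formula.

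Carrying out the elementary algebra telescopes the three terms to $(2s^{2}+1)/(3s) = \tfrac{2}{3}s + \tfrac{1}{3}s^{-1}$, which is precisely $\tfrac{2}{3}\cdot 2^{n} + \tfrac{1}{3}\cdot 2^{-n}$, establishing the first assertion of the theorem. To conclude, note that there are $(2\cdot 2 - 1)!! = 3$ matchings in pairs of the four indices, corresponding to the three configurations in Figure \ref{fig:1}; by Lemma \ref{thm:4momVP} each contributes the same main term, so
\begin{equation*}
M_{4,n}\ =\ 3\left(\tfrac{2}{3}\cdot 2^{n} + \tfrac{1}{3}\cdot 2^{-n}\right)\ =\ 2^{n+1} + 2^{-n},
\end{equation*}
which is asymptotic to $2^{n+1}$ as $n\to\infty$.

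\textbf{Where I expect friction.} The calculation itself is routine once the lemmas are in hand; the only real care needed is bookkeeping — ensuring $c=0$ is counted exactly once (only in the non-crossing positive sum, since the crossing case and the negatives begin at $c=1$) and verifying that the "anomalous" matchings highlighted in Figure \ref{fig:adjrows} and the boundary $1$-dimensional strips in Figure \ref{fig:adjareas} really are absorbed into the $O_{n}(N^{2})$ error. The latter is already justified by Lemma \ref{thm:properties}, so the obstacle amounts to a careful indexing of the two sums and an honest application of the cubic and quadratic power-sum identities.
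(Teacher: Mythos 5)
Your proposal is correct and follows essentially the same route as the paper: the per-$c$ counts from the preceding lemma, the factor of two for the negative constants, the $c=0$ term for the zero $C$-vector, and the final factor of three from Lemma~\ref{thm:4momVP} together with $(2m-1)!!=3$. The only (cosmetic) difference is bookkeeping — you reindex three sums and apply the cubic power-sum identity directly, whereas the paper first cancels $(2^n-c)^3$ against $-c^3$ to reduce everything to a single sum of squares — and both yield $\tfrac{2}{3}2^n+\tfrac{1}{3}2^{-n}$ and hence $M_{4,n}=2^{n+1}+2^{-n}$.
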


\begin{proof}  For each value of $c$, we note that the contribution to $M_{4,n}(N)$ is
\begin{align}\frac{1}{N^3}\left(\left(\frac{2^n-c}{2^n}\right)^3 N^3 + \left(\frac{2^nc^2-c^3}{2^{3n}}\right)N^3+O_n(N^2)\right) \nonumber\\
 \qquad = \left(\frac{2^n-c}{2^n}\right)^3 + \frac{2^n c^2-c^3}{2^{3n}} + O_n\left(\frac{1}{N}\right).\end{align}
Taking the limit as $N\to\infty$ yields the contribution to $M_{4,n}$ is
\be\left(\frac{2^n-c}{2^n}\right)^3 + \frac{2^nc^2-c^3}{2^{3n}}.\ee

We sum over all values of $c$, multiply by $2$ to account for the negative constants, and include the contribution from $C=0$, known to be 1 from \cite{MMS} to obtain the contribution to the fourth moment from the adjacent matching case:
\be
M_{4,n}^{{\rm adj}}\ =\   1+\frac{2}{2^{3n}}\sum_{c=1}^{2^n}\left((2^n-c)^3 + 2^n c^2 - c^3\right).\ee Extending the sum above to include $c=0$ cancels the first and last terms of the sum, but we must subtract $4$ to compensate. This then leaves a sum of squares which is easily evaluated:
\begin{align}M_{4,n}^{{\rm adj}}\  &= \ -1+\frac{2}{2^{3n}}\sum_{c=0}^{2^n} 2^n c^2 \nonumber\\
&= \ -1 + \frac{2}{2^{2n}}\frac{2^n(2^n+1)(2\cdot 2^n+1)}{6} \nonumber\\
&= \ -1 + \frac{(1+2^{-n})(2\cdot 2^n+1)}{3} \nonumber\\
&= \ -1 + \frac{1}{3} (2\cdot 2^n + 2 + 1 + 2^{-n}) \nonumber\\
&= \frac{2}{3} 2^n + \frac{1}{3} 2^{-n}.\end{align} Multiplying by three to account for the two adjacent configurations and one non-adjacent configuration then yields
\be M_{4,n}\ =\ 2^{n+1} + 2^{-n},\ee the desired result.\end{proof}

\subsubsection{Adjacent Matching Case for Doubly Palindromic Toeplitz Matrices}

To calculate the contribution from the case of all adjacent matchings to even moments of doubly palindromic Toeplitz matrices, we simply generalize the pictorial method from above. For the $2m\textsuperscript{th}$ moment, we find that our final system of equations for the adjacent configuration becomes
\begin{align*}
i_3 & \ =  \ i_1 + C_1 \\
i_5\ =\ i_3 + C_2 &\ =\ i_1 + C_1 + C_2 \\
&\ \ \vdots \\
i_1\ =\ i_{2m-1} + C_m &\ =\ i_1 + \sum_{k=1}^m C_k. \end{align*}

\begin{rek}\label{indexil} The even indices don't appear because the $n\textsuperscript{th}$ matching gives the equation $i_{2n}-i_{2n-1}\ =\ -(i_{2n+1}-i_{2n}) + C_n$, and the $i_{2n}$ terms cancel.  However, for each non-zero constant $C_l$, we will have a picture similar to Figure \ref{fig:adjrows}, which limits the number of good values of the even indices $i_{2l}$ analogous to the restrictions on $l$ for the fourth moment. Moreover, as each $i_{2n+1}$ is related back to $i_1$, the difference between the maximum and minimum partial sums must be strictly less than $N+O(1)$ or we lose a degree of freedom.\end{rek}

Before deducing the main term in the case of all adjacent matchings, we first set our notation.

\begin{defi} A \textbf{C-vector} is the ordered collection of constants relating the odd indices to each other. In the example at the beginning of this subsection, the C-vector would be $(C_1, C_2, \ldots, C_m)$.
A \textbf{core} of a C-vector is the ordered collection of nonzero constants in the C-vector. That is, we collapse down the C-vector to its core by removing all of the zero constants from it. We can also think of the C-vector as being built up from the core by adding back the zeros in the correct places.
\end{defi}

\begin{thm}\label{thm:dptm}The contribution of one adjacent configuration to the $2m\textsuperscript{th}$ moment averaged over the ensemble of doubly palindromic Toeplitz matrices equals
\be \label{eq:momcal}
-2 + 2^{-m}\left(\sum_{b=1}^{3} b^m\right). \ee
\end{thm}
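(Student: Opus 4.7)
The plan is to reduce Theorem~\ref{thm:dptm} to a careful enumeration of valid $C$-vectors for the adjacent configuration. Combining the setup in Remark~\ref{indexil} with the main-term analysis of Lemma~\ref{thm:neg}, one gets a cyclic system $i_{2k+1}=i_{2k-1}+C_k$ with $\sum_{k=1}^{m}C_k=0$, in which each even index $i_{2k}$ is constrained to a well-matching region depending on $C_k$. For DPT ($n=1$), each $C_k$ lies in $\{0,\pm N/2,\pm(N/2-1)\}$, so I would parametrize $C_k=\epsilon_k(N/2)+\eta_k$, with $\epsilon_k\in\{-1,0,+1\}$ encoding the palindromic jump direction and $\eta_k$ the $\pm 1$ correction recording whether the step crosses the main diagonal, in the sense of Lemma~\ref{thm:properties}.

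First, I would isolate the configurations that contribute to the main term. The exact equality $\sum C_k=0$ forces $\sum\epsilon_k=0$ at leading order (since the total $\eta$-correction is $O(m)$ and hence $o(N)$), and then also $\sum\eta_k=0$. For $i_1$ to range over $\Theta(N)$ values, the partial sums $T_j=\sum_{k\le j}\epsilon_k$ must stay on one side of $0$: entirely in $\{0,1\}$ or entirely in $\{-1,0\}$. If the walk visits both $+1$ and $-1$ then $i_1$ is pinned to $O(1)$ choices and the contribution is $O(1/N)$ by the same degree-of-freedom count used in Lemma~\ref{thm:neg}. This reduces the enumeration to two symmetric families of half-plane excursions of length $m$ from $0$ to $0$, together with the all-zero walk.

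Second, for a half-plane core with $r$ up-steps and $r$ down-steps interleaved with $m-2r$ flats (yielding $\binom{m}{2r}$ positional arrangements), I would compute the main-term contribution as a product: $i_1$ contributes $N/2$ for a non-trivial half-plane walk (or $N$ for the all-zero walk), each $i_{2k}$ with $\epsilon_k=0$ is free and contributes $N$, and each of the $2r$ even indices with $\epsilon_k\ne 0$ contributes a well-matching region of size $N/2+O(1)$, directly generalizing the region analysis in the proof of Theorem~\ref{lem:contr4thmomentgeneraln}. After dividing by $N^{m+1}$ and bundling in the count of admissible crossing profiles $(\eta_k)$ satisfying $\sum\eta_k=0$, the theorem reduces to a binomial-weighted identity of the form
\[
M_{2m,1}^{\mathrm{adj}}\ =\ 1\ +\ \sum_{r\ge 1}\binom{m}{2r}\,w_r\,4^{-r},
\]
where $w_r$ encodes the two sign-symmetric cores of length $2r$ together with their admissible crossing profiles.

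The main obstacle is the final step: simplifying this binomial-weighted sum to $-2+2^{-m}(1^m+2^m+3^m)$. The three bases $1,2,3$ strongly suggest a transfer-matrix interpretation in which each position of the length-$m$ $C$-vector carries one of three bookkeeping states (roughly ``flat step'', ``non-crossing jump'', ``crossing jump''), with local transitions encoding the partial-sum restriction and the global $\sum\eta_k=0$ constraint. I would therefore aim to recast the double sum over cores and crossings as the trace of the $m$th power of a $3\times 3$ transfer matrix whose spectrum is $\{1,2,3\}$, with the $2^{-m}$ arising from normalization against $N^{m+1}$ and the subtracted $-2$ correcting for two boundary terms (precisely the half-plane walks that fail the full partial-sum constraint). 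The most delicate technical point is balancing the $\eta$-choices---whose local multiplicity of $2$ per non-zero $\epsilon_k$ collides with the global constraint $\sum\eta_k=0$---against the combinatorics of the half-plane excursions; making this accounting exact is where I expect the argument to consume most of its effort.
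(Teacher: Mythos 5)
Your setup matches the paper's through the alternation constraint: the parametrization $C_k=\epsilon_k(N/2)+\eta_k$, the reduction to half-plane $\epsilon$-walks (which is equivalent to the paper's observation that consecutive non-zero $C$'s must have opposite signs), the $\binom{m}{2r}$ count of zero-insertions, and the factor $(1/2)^{2r+1}$ from $i_1$ and the $2r$ constrained even indices are all correct and are precisely what the paper uses. However, you are missing the paper's second key observation and this is where your proposal stalls. The paper notes that the constants $\pm N/2$ and $\pm(N/2-1)$ \emph{cannot be mixed} within a single contributing $C$-vector: if both occur, $a_{i_1 i_2}$ must lie on incompatible diagonals and a degree of freedom in $i_2$ is lost, making the contribution $O(1/N)$. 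Your constraint $\sum\eta_k=0$ does not capture this --- for example the core $(N/2,\ -N/2,\ N/2-1,\ -(N/2-1))$ has $\sum C_k=0$, alternates correctly, and satisfies $\sum\eta_k=0$, yet it is excluded. With the no-mixing lemma in hand, the set of admissible cores of length $2r$ collapses to exactly four (the two alternating sign-patterns for each of $\pm N/2$ and $\pm(N/2-1)$), so your $w_r$ is identically $2$ rather than a complicated weighted count of crossing profiles.

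Once $w_r=2$ is known, the final simplification is elementary and needs no transfer matrix. Indeed
\[
1+\sum_{r\ge 1}\binom{m}{2r}\,2\cdot 4^{-r}
\;=\;
1+2\left[\tfrac12\!\left(\!\left(1+\tfrac12\right)^m+\left(1-\tfrac12\right)^m\right)-1\right]
\;=\;
-1+\left(\tfrac32\right)^m+\left(\tfrac12\right)^m,
\]
which is $-2+2^{-m}(1^m+2^m+3^m)$. The three bases $1,2,3$ arise simply as $2\cdot\tfrac12,\ 2\cdot 1,\ 2\cdot\tfrac32$ from the binomial theorem applied to the even-index part; they are not the spectrum of any $3\times 3$ transfer matrix. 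So the main obstacle you flag --- balancing $\eta$-choices against half-plane excursions --- disappears once you prove the no-mixing lemma, and your transfer-matrix strategy, while imaginative, is both unproved and unnecessary. To repair the proposal, supply the degree-of-freedom argument ruling out mixed cores, set $w_r=2$, and close with the binomial identity above.
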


\begin{proof}
The following observations greatly simplify the analysis for this case:

\bi

\item If the constants $\pm\frac{N}{2}$ appear in the $C$-vector $(C_1, \dots, C_n)$, then $\pm\frac{N}{2}-1$ cannot occur as a main term. If it did, we would lose a degree of freedom in $i_2$, as $a_{i_1i_2}$ would need to lie on a very specific set of diagonals.

\item If some $C_j$ is non-zero, then the next non-zero $C$ chosen must be $-C_j$, as we would otherwise lose a degree of freedom in $i_1$.

\ei

Now consider the $2m\textsuperscript{th}$ moment, which will have a C-vector of length $m$. We can then consider a subset of length $k$ ($k$ even) of $(\frac{N}{2}, -\frac{N}{2},\ldots,\frac{N}{2}, -\frac{N}{2})$ that forms the core of the $C$-vector, with the remaining entries being zero. There are then $\binom{m}{k}$ distinct ways to insert the remaining zeros, and thus $\binom{m}{k}$ ways to build a C-vector around this core.

We now consider the contribution from each of these $C$-vectors. By Remark \ref{indexil}, there will be $\frac{N}{2}$ values of $i_1$ to choose from, and there will be $k$ other $i_{2l}$ (corresponding to the $k$ nonzero $C_l$) that will have $(N-\frac{N}{2})+O(1)$ good values. Thus the contribution for each of these cases will be $(\frac{1}{2})^{k+1}$.  The total contribution to the $2m\textsuperscript{th}$ moment from this configuration, summing over all possible $C$-vectors, is therefore
\be \sum_{\substack{k\text{ even} \\ k=2}}^m \binom{m}{k}\left(\frac{1}{2}\right)^{k+1}. \ee
If we pull out a factor of $\frac{1}{2}$ and include $m=0$ in the sum, we can use the binomial theorem to express this as
\be \label{momcalpart}
\frac{1}{4}\left(\left(1+\frac{1}{2}\right)^m + \left(1-\frac{1}{2}\right)^m\right) - \frac{1}{2}. \ee

The contribution from a core of $(-\frac{N}{2}, \frac{N}{2}, \ldots,-\frac{N}{2}, \frac{N}{2})$ will be the same as that above. The cores of $(\pm(\frac{N}{2}-1), \mp(\frac{N}{2}-1), \ldots,\pm(\frac{N}{2}-1), \mp(\frac{N}{2}-1))$ can be similarly analyzed, and they will also have the same contribution since $N-\frac{N}{2} + O(1) = \frac{N}{2} + O(1)$, so we multiply \eqref{momcalpart} by $4$. We also include the contribution from the $0$-vector, which is $1$ for the adjacent case. Thus, the contribution from each configuration is
\be
-2 + \left(1+\frac{1}{2}\right)^m + 1^m + \left(1-\frac{1}{2}\right)^m\ = \ -2 + 2^{-m}\left(\sum_{b=1}^{3} b^m \right),\ee completing the proof.\end{proof}

\begin{rek} Unfortunately, this method does not readily generalize to matrices with a greater number of palindromes. The fundamental reason is that the observations made at the beginning of Theorem \ref{thm:dptm} no longer hold for these matrices, which then makes it tremendously more difficult to generate all valid $C$-vectors.

To demonstrate these difficulties, we investigated the $6$\ts{th} moment of a matrix with four palindromes. While we can construct $C$-vectors in the same manner as for the doubly palindromic ensemble, we clearly will be missing a substantial number of possible $C$-vectors. For instance, we miss the vector $\left(\frac{N}{2}, \frac{N}{4}, -\frac{3N}{4}\right)$.

In addition to these vectors, we have even more problematic vectors such as $\left(\frac{N}{2}, \frac{N}{4}-1, -\left(\frac{3N}{4}-1\right)\right)$, which turns out to be valid for $a_{ij}$ chosen within a certain parallelogram shaped band of the matrix. These new vectors, in which ``mixing'' is important, are hard to systematically account for, making it quite difficult to determine precisely which $C$-vectors to include and which to exclude. While we could in principle calculate the adjacent contribution to any given moment for any number of palindromes, there is no apparent method that will simultaneously work for all of these possibilities. Similar to other investigations on related ensembles, we are left with general existence proofs of the moments, as well as estimates on their rate of growth. \end{rek}

\subsection{General Even Moments} We extend our analysis to consider the general even moments. We expect Lemma \ref{thm:4momVP} to hold for all general even moments as in the case of single palindromic Toeplitz matrices; in other words, we expect in the limit as $N\to\infty$ each matching configuration to contribute equally. Given a general even $2m\ts{th}$ moment, for each configuration of this moment we have a system of $m$ equations with $2m$ unknown indices. In the case of single palindromic Toeplitz matrices, it is known that we can always choose $m+1$ free indices and the $C$-values such that all $m$ equations are satisfied. Furthermore, because $C$ can only be $\{ 0, \pm (N-1) \}$ in the case of single palindromic Toeplitz matrices (all other choices give a contribution of size $O( \frac{1}{N})$), there is only one valid choice for each of the remaining $m-1$ indices and therefore each configuration contributes 1 to the $2m\ts{th}$ moment. However, in the case of highly palindromic Toeplitz matrices there are more choices for the $m-1$ indices. As the result, the lower order term, which was previously negligible, starts contributing to the moment. Nonetheless we conjecture that the contribution from this new term is the same for all configurations and sketch a detailed analysis in Appendix \ref{sec:configurationconj}. If we could extend Lemma \ref{thm:4momVP}  to the general even $2m\ts{th}$ moment, then we would have greatly reduced the complexity of our moment problem as we  would only need to calculate the contribution of the completely adjacent matching, and immediately get the same contribution from the other $(2m-1)!! - 1$ configurations.

The arguments above, as well as those in the appendices, lead us to make the following conjecture.

\begin{conj}[Configurations Conjecture]\label{conj:configurationconjequality} Let $n$ be a fixed positive integer, and for each $N$ a multiple of $2^n$ consider the ensemble of real symmetric $N\times N$ palindromic Toeplitz matrices whose first row is $2^n$ copies of a fixed palindrome, where the independent entries are independent, identically distributed random variables arising from a probability distribution $p$ with mean 0, variance 1 and finite higher moments. Then as $N\to\infty$ each of the $(2m-1)!!$ configurations for the $2m\ts{th}$ moment contribute the same main term. In particular, this means the general $2m$\textsuperscript{th} moment is just $(2m-1)!!$ times the contribution from the configuration of all adjacent matchings.
\end{conj}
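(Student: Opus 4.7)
The plan is to generalize the equal-contribution argument of Lemma \ref{thm:4momVP} from the fourth moment to every even moment and every palindromicity $n$, via a configuration-by-configuration bijection. Fix a valid $C$-vector $(C_1, \ldots, C_m)$ (with each $C_j$ drawn from the permissible set in \eqref{eq:match1}--\eqref{eq:match2}). By Lemma \ref{thm:neg} and Lemma \ref{lem:old221222}, only the all-negative-sign case contributes to the main term, so for each configuration $\sigma$ (a pairing of $\{1, \ldots, 2m\}$ into $m$ pairs) we obtain a system of $m$ linear equations in the $2m$ cyclic differences $\tilde{x}_r = i_r - i_{r+1}$, together with the cycle constraint $\sum_r \tilde{x}_r = 0$. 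The first step would be to prove that, for each fixed $C$-vector, the \emph{linear algebraic} part (ignoring diagonal-crossing issues) yields $m+1$ degrees of freedom \emph{independent of the configuration} $\sigma$: this is the standard observation that in the symmetric Toeplitz setting, all pair matchings on a cycle produce systems of the same rank once the all-negative-sign restriction is imposed.

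The second step would be to convert this linear count into an index count by handling the \emph{geometric} restrictions on each free index arising from the palindromic structure. For the adjacent configuration this was carried out explicitly in Theorem \ref{thm:dptm}; the general strategy would be to show that for each $C$-vector, the allowed range for each of the $m+1$ free indices is a product of one-dimensional intervals whose lengths depend only on the multiset of constants $\{|C_1|, \ldots, |C_m|\}$ and not on the combinatorial structure of $\sigma$. One approach is to build a canonical bijection from index tuples satisfying the equations for $\sigma$ to those satisfying the equations for the all-adjacent configuration, by composing cyclic ``transpositions'' that swap adjacent pair endpoints; each such swap should preserve the count up to $O_n(N^m)$ error because the only effect on the ranges of free indices is a permutation of the $C_j$ applied. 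This reduction would reduce Conjecture \ref{conj:configurationconjequality} to the adjacent contribution computed (for $n=1$) in Theorem \ref{thm:dptm} and (in general) to a formula that one would extract by generalizing that theorem.

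The third and hardest step is to handle the \emph{mixed} $C$-vectors that appear for $n \geq 2$, illustrated in the remark following Theorem \ref{thm:dptm} by the vector $(N/2, N/4, -3N/4)$ and its ``$\pm 1$'' companions. The ``main diagonal correction'' phenomenon described in Lemma \ref{thm:properties} means that two nearly-equivalent $C$-vectors, differing by $\pm 1$ in some coordinate, together cover an entire parallelogram of index choices; one would want to show that the pairing-up of such companion $C$-vectors behaves uniformly across configurations. The plan would be to group $C$-vectors into equivalence classes under the symmetries $C_j \mapsto \pm(C_j - \mathrm{sgn}(C_j))$ and verify that the total contribution from each equivalence class is configuration-independent by a direct area-computation argument similar to Figure \ref{fig:adjareas}. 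The main obstacle is precisely this: enforcing that the combinatorics of crossings of the main diagonal (and of the internal palindrome boundaries) does not break the configuration symmetry. I expect the cleanest resolution will come from reinterpreting the sum over $C$-vectors and free indices as a lattice-point count in a configuration-dependent polytope, and then proving via a volume-preserving map that the polytopes for different $\sigma$ have the same volume up to boundary corrections of order $O_n(N^m)$, which vanish after division by $N^{m+1}$.
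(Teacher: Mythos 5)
The statement you are ``proving'' is explicitly labeled a \emph{conjecture} in the paper, and the paper does not give a proof; it offers only numerical evidence (Appendix~\ref{sec:numerics}) and heuristic arguments (Appendix~\ref{sec:configurationconj}) that are themselves posed as conjectures (Conjectures~\ref{thm:lift1} and~\ref{thm:lift2}). So there is no proof in the paper to compare your proposal against directly; what follows compares your plan with the paper's heuristic reasoning and flags where both fall short.

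Your overall structure is close in spirit to the paper's Appendix~\ref{sec:configurationconj}, but the organization differs. The paper proceeds by \emph{lifting} between moment levels: it defines a map $F$ that inserts a matched pair (adjacent or non-adjacent) into a configuration for the $2k$\textsuperscript{th} moment to obtain a configuration for the $(2k+2)$\textsuperscript{nd} moment, then uses Corollary~\ref{thm:replacement} to replace pairs and iterate down to the all-adjacent configuration by induction on $m$. You instead propose a direct comparison at fixed $2m$: build a bijection of index tuples between two arbitrary configurations $\sigma$ and $\sigma'$ via ``cyclic transpositions,'' and argue that the $(m+1)$-dimensional regions of valid free indices have equal volume up to $O_n(N^m)$. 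The two routes are logically parallel (both reduce to the all-adjacent case and both treat the $C$-vector sum as the invariant to be preserved), and they stumble on precisely the same obstruction, which neither resolves.

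The concrete gaps in your proposal are as follows. First, your Step~2 claims that, for each fixed $C$-vector, the set of valid index tuples is a product of one-dimensional intervals whose lengths depend only on the multiset $\{|C_1|,\dots,|C_m|\}$. This is stronger than the conjecture (which only requires the \emph{total} over $C$-vectors to be configuration-independent) and is not true as stated: the proof of Lemma~\ref{thm:4momVP} and Figures~\ref{fig:adjrows}--\ref{fig:adjareas} show the admissible regions are triangles and parallelograms carved out by the main-diagonal crossing, not interval products, and the paper must pair each $C_1$ with its complement $C_2 = N-1-C_1$ to recover a uniform count. If the per-$C$-vector contribution genuinely were configuration-independent, the argument would be much easier; there is no evidence in the paper for that refinement. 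Second, and more fundamentally, Step~3 correctly identifies the ``mixed'' $C$-vectors (e.g.\ $(N/2,\,N/4,\,-3N/4)$ and its $\pm 1$ companions) as the key obstacle for $n\ge 2$, but your plan to group them under the symmetry $C_j\mapsto\pm(C_j-\operatorname{sgn}(C_j))$ and compare polytope volumes is exactly the part that remains unverified. The paper flags this too: the remark after Theorem~\ref{thm:dptm} states it is ``quite difficult to determine precisely which $C$-vectors to include and which to exclude,'' and Conjecture~\ref{thm:lift1} explicitly notes that the lift map $F$ need not be surjective when $C_2+C_3$ fails to be a valid $C$ value. Your proposal does not close this gap either; it only names it. So the plan is a reasonable alternative framing, but it does not supply the missing idea and would not yield a proof without resolving the mixed-$C$-vector bookkeeping that the authors themselves could not carry out.
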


We provide some numerical support for this conjecture in Appendix \ref{sec:numerics} and some theoretical evidence in Appendix \ref{sec:configurationconj}.

\subsection{Moment bounds and fat tails}

We now extend Theorem \ref{thm:dptm} to matrices with greater palindromicity. In doing so, we miss many of the $C$-vectors that contribute to these moments, but exact calculations for even a quadruply palindromic matrix have proven difficult. The goal is to obtain good enough bounds on the moments to deduce properties of the limiting spectral measures. We begin with the following lemma.

\begin{lem}\label{lem:boundevenmomentsnlower} Fix $n$ and consider $M_{2m,n}$, the limit as $N\to\infty$ of the average of the $2m$\textsuperscript{th} moment of our ensemble (the set of $N\times N$ real symmetric palindromic Toeplitz matrices where the first row contains $n$ palindromes). Then for $m$ sufficiently large, $M_{2m,n} > (2m-1)!!$, or in other words $M_{2m,n}$ is larger than the corresponding moment of the standard normal. If we assume Conjecture \ref{conj:configurationconjequality} then we may improve the lower bound to \be M_{2m,n} \ \ge \ \left(-2\cdot (2^n-1) + 2^{-mn}\left(\sum_{c=1}^{2\cdot2^n-1} c^m\right)\right) \cdot (2m-1)!!. \ee As $m\to\infty$, we have \be M_{2m,n} \ \gg \  \frac{2^{m+n}}{m} \cdot (2m-1)!!.\ee
\end{lem}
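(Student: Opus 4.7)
The plan is to establish the three claims by enumerating a tractable subset of $C$-vectors in the adjacent configuration and invoking Conjecture \ref{conj:configurationconjequality} to extend the count to all $(2m-1)!!$ configurations. For the first claim, $M_{2m,n} > (2m-1)!!$, observe that the Gaussian moment $(2m-1)!!$ is exactly what one would obtain if each configuration contributed only the zero $C$-vector (contribution $1$). Since for $n \geq 1$ every configuration receives additional strictly positive contributions from non-zero $C$-vectors, summing over configurations strictly exceeds $(2m-1)!!$ once these extra terms dominate the lower-order corrections, which happens for $m$ sufficiently large.

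To obtain the sharpened bound under the conjecture, I would generalize Theorem \ref{thm:dptm} to $n$-palindromic matrices. The two structural observations used there---that consecutive non-zero entries in a $C$-vector must be negatives of each other, and that $\pm cN/2^n$ and $\pm(cN/2^n - 1)$ cannot coexist in a single vector without costing a degree of freedom---extend to general $n$, now applying to each $c \in \{1, \ldots, 2^n - 1\}$ separately. For a pure-core $C$-vector of length $k$ built from alternations of $\pm cN/2^n$, the number of good tuples contributes a factor $\alpha_c^{k+1}$ with $\alpha_c = (2^n - c)/2^n$, since by Lemma \ref{thm:properties} the free index $i_1$ and each of the $k$ restricted even indices each have approximately $\alpha_c N$ good values. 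Summing over the $\binom{m}{k}$ zero-insertions via the identity $\sum_{k \text{ even}}\binom{m}{k}x^k = \tfrac{1}{2}((1+x)^m + (1-x)^m)$, then over the four core types for each admissible $c$ (two sign choices and two $\pm 1$ shifts), and finally adding $+1$ from the zero vector, collapses to the stated expression $-2(2^n - 1) + 2^{-mn}\sum_{c=1}^{2\cdot 2^n - 1} c^m$. Invoking the conjecture multiplies this by $(2m-1)!!$. The direction of the inequality is preserved because we drop the ``mixed'' $C$-vectors (such as $(N/2, N/4, -3N/4)$, flagged in the remark after Theorem \ref{thm:dptm}), all of which would contribute additional nonnegative mass.

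For the asymptotic $M_{2m,n} \gg 2^{m+n}/m \cdot (2m-1)!!$, I would rewrite $2^{-mn}\sum_{c=1}^{2^{n+1}-1} c^m = 2^n \cdot 2^{-n}\sum_{c=1}^{2^{n+1}-1}(c/2^n)^m$ and interpret the second factor as a right Riemann sum of $x^m$ on $[0,2]$ with mesh $2^{-n}$, approximating $\int_0^2 x^m\, dx = 2^{m+1}/(m+1)$. This gives the sum as $\sim 2^{m+n+1}/(m+1)$, swamping the $-2(2^n - 1)$ correction, and multiplication by $(2m-1)!!$ yields the claimed rate.

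The main obstacle throughout is the enumeration step for $n \geq 2$: as explained in the remark following Theorem \ref{thm:dptm}, the ``mixed'' $C$-vectors become intractable to list systematically for more palindromic matrices, and it is only because their contributions are nonnegative that the pure-core enumeration yields a valid lower bound. A secondary technical concern is that the Riemann-sum approximation must be handled carefully in the regime $m \gg 2^n$, where $x^m$ concentrates near $x = 2$ and the finite mesh $2^{-n}$ undersamples the peak; a more refined Euler--Maclaurin or endpoint-dominated analysis may be needed there to rigorously secure the $2^{m+n}/m$ rate.
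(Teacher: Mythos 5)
Your overall strategy matches the paper's: generalize Theorem~\ref{thm:dptm} by enumerating pure cores built from the palindromic-period constants, sum the binomial series over zero-insertions, and invoke Conjecture~\ref{conj:configurationconjequality} to multiply by $(2m-1)!!$; the Riemann-sum asymptotic for the last claim is also the paper's. However, there is a concrete gap in the enumeration step. You assert that \emph{all four} core types (two signs and two $\pm 1$ shifts) contribute $\alpha_c^{k+1}$ per $C$-vector, justified by the $k$ restricted even indices each having $\approx \alpha_c N$ good values. That is true only for the cores built from $\pm cN/2^n$. For the shifted cores built from $\pm(cN/2^n - 1)$, the constant crosses the main diagonal, and the paper's Figure~\ref{fig:adjrows}/Lemma~\ref{thm:properties} analysis shows the restricted even indices then have $\approx (1-\alpha_c)N$ good values, so the per-vector contribution is $\alpha_c(1-\alpha_c)^k$, not $\alpha_c^{k+1}$. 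If you carry out the binomial sum with the uniform $\alpha_c^{k+1}$, the per-$c$ terms become $2\alpha_c[(1+\alpha_c)^m + (1-\alpha_c)^m] - 4\alpha_c$, which does \emph{not} collapse to the stated expression (check, e.g., $n=2$, $m=3$: you would get $7.75$ rather than the correct $6.25$). The correct per-$c$ contributions do sum to the paper's formula, but only after a non-trivial pairwise cancellation between $c$ and $2^n - c$ (equivalently, the paper pairs $C_c$ with its complement $N - 1 - C_c$ rather than with $C_c - 1$, giving the clean identity $\alpha_c^{k+1} + (1-\alpha_c)\alpha_c^k = \alpha_c^k$). Your ``collapses to'' therefore hides both an incorrect per-vector count and a missing cancellation argument.

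A smaller point: for the first claim $M_{2m,n} > (2m-1)!!$ you argue that ``every configuration receives additional strictly positive contributions from non-zero $C$-vectors,'' which is essentially what the conjecture would give you and is not established unconditionally. The paper's argument is sharper and does not need this: each of the $(2m-1)!!$ configurations contributes at least $1$ (this follows from the $C \in \{0, \pm(N-1)\}$ analysis already done in \cite{MMS} for the singly palindromic case), and the one adjacent configuration contributes $\gg 2^{m+n}/m$, so $M_{2m,n} \gg 2^{m+n}/m + (2m-1)!! - 1 > (2m-1)!!$ for $m$ large. You should use that structure rather than an unproved positivity claim about all configurations.
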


\begin{proof}  Let $C_c=\frac{cN}{2^n}$ for $b\in\{1,\ldots,2^n-1\}$. We determine the contribution to the average $2m$\ts{th} moment as $N\to\infty$ from one of the two adjacent matchings. That is, consider the core (i.e., the non-zero part) of the corresponding $C$-vectors is $(\pm C_c, \mp C_c,\dots, \pm C_c, \mp C_c)$ and its complement $(\pm(N-1-C_c), \mp(N-1-C_c),\ldots, \pm(N-1-C_c), \mp(N-1-C_c))$. They contribute
\be -2 + \left(2 - \frac{c}{2^n}\right)^m + \left(\frac{c}{2^n}\right)^m.\ee The proof of this claim goes back to the observation in Figure \ref{fig:adjrows} that for $C_c=\frac{cN}{2^n}$, the number of free $l$ values is $N-\frac{cN}{2^n} + O(1)$, whereas if $C_c=\frac{cN}{2^n}-1$, then there are $\frac{cN}{2^n} + O(1)$ good $l$ values.  Thus the complementary $C_c$ will give the same restrictions on the number of $l$ values.

Moreover, the restrictions on $i_1$ from $\frac{cN}{2^n}$ and $N-1-\frac{cN}{2^n}$ sum to $1$. Thus, as there are the two cases (plus first or minus first) for each, when we sum them and extend the sums back to $0$, we have
\be
-2 + 2\cdot \sum_{j\ {\rm even}}^m \binom{m}{j}\left(\frac{2^n-c}{2^n}\right)^{j+1} = -2 + \left(\frac{2\cdot 2^n -c}{2^n}\right)^m + \left(\frac{c}{2^n}\right)^m.\ee

In order to get our lower bound for $M_{2m,n}$, we repeat this for every value of $c\in\{1,2,\ldots,2^n -1\}$.  Adding in the contribution from the zero vector, we obtain
\be
-2\cdot (2^n-1) + 2^{-mn}\left(\sum_{c=1}^{2\cdot2^n-1} c^m\right),\ee which is easily summed for any value of $k$.

If we assume Conjecture \ref{conj:configurationconjequality}, each of the $(2m-1)!!$ matchings contribute equally, and hence \be M_{2m,n}\ \geq \ \left(-2\cdot (2^n-1) + 2^{-mn}\left(\sum_{c=1}^{2\cdot2^n-1} c^m\right)\right) \cdot (2m-1)!!. \ee The behavior for large $m$ follows by approximating the sum with an integral.

If we do not assume Conjecture \ref{conj:configurationconjequality}, we instead use the fact that each configuration contributes at least 1 (this follows from the analysis of the analysis of the single palindromic case of \cite{MMS}) and the analysis above for the case of all adjacent matchings. This yields \be M_{2m,n} \ \gg\ \frac{2^{m+n}}{m} + (2m-1)!!-1,\ee which exceeds $(2m-1)!!$ for $m$ sufficiently large.\end{proof}

\begin{rek} We can slightly improve the lower bound in the case when we do not assume  Conjecture \ref{conj:configurationconjequality} by noting that there are two cases where we have all adjacent matchings. Of course, this improvement is negligible in the limit, and it is only under the assumption of Conjecture \ref{conj:configurationconjequality} that we can significantly improve the lower bound.
\end{rek}

We can now turn to an analysis of the properties of the limiting spectral measures. Note $n=0$ corresponds to the real symmetric palindromic Toeplitz matrices studied in \cite{BDJ,MMS}, and $n=1$ corresponds to the doubly palindromic Toeplitz matrices studied in \cite{MMS}. We now prove Theorem \ref{thm:fattails}, which we restate below for the reader's convenience.\\ \

\noindent \textbf{Theorem \ref{thm:fattails}\ (Fat Tails).} \emph{Consider the ensemble from Theorem \ref{thm:mainweakstrong}. For any fixed $n \ge 2$, the moments grow faster than the corresponding moments of the standard normal. If we additionally assume Conjecture \ref{conj:configurationconjequality}, then letting $M_{2m,n}$ denote the $2m$\textsuperscript{th} moment of the limiting spectral measure of our ensemble for a given $n$, we have \be M_{2m,n} \ \gg \  \frac{2^{m+n}}{m} \cdot (2m-1)!!.\ee  The limiting spectral measure thus has unbounded support, and fatter tails than the standard normal (or in fact any of the known limiting spectral measures arising from an ensemble where the independent entries are chosen from a density whose moment generating function converges in a neighborhood of the origin).}

\begin{proof} From Lemma \ref{lem:boundevenmomentsnlower} we know $M_{2m,n} > (2m-1)!!$ for $m$ large, and under the assumption that Conjecture \ref{conj:configurationconjequality} holds we additionally have \be M_{2m,n} \ \gg \  \frac{2^{m+n}}{m} \cdot (2m-1)!!.\ee As $n \ge 1$, for $m$ large this is greater than the $m$\ts{th} moment of the standard normal, which is $(2m-1)!!$. Thus our limiting spectral measure has unbounded support, and more mass in the tails than the standard normal, or in fact, any normal if $n \ge 2$. To see the last claim, note that if $X \sim N(0,\sigma^2)$ then the $2m$\textsuperscript{th} moment of $X$ is $\sigma^{2m} \cdot (2m-1)!!$, and thus when $n \ge 2$ eventually the moment of our ensemble is greater than the moment of this normal.
\end{proof}













\appendix

\section{Numerical Methods}\label{sec:numerics}

While they can never be accepted as proof\footnote{When investigating the case of real symmetric palindromic Toeplitz matrices years ago, both the authors of \cite{BDJ} and \cite{HM} looked at the numerical simulations and thought the results looked Gaussian; it was only after a more careful analysis that the small deviations were isolated.}, numerical simulations did much to guide our efforts in attacking this problem, and we would not have been successful without it, as our naive adaptations of previous works on this subject failed to give even remotely accurate predictions. Therefore we give a brief outline of our use of these simulations below.

Initially we primarily used a basic, direct method to approximate the moments of the eigenvalue distribution. We first set up a matrix with $2^n$ palindromes and chose $N$ so that the matrix had the desired form (every element appears exactly $2^{n+1}$ times in the first row). For each moment we used the eigenvalue trace lemma to calculate the moment of the eigenvalue distribution for this particular matrix, then we averaged over a large number of such random matrices to get an approximation for that moment averaged over the ensemble of Toeplitz matrices with $2^n$ palindromes. To get increased accuracy, we simply increased $N$.

Our calculations were successful in verifying our predictions for the higher moments of the doubly palindromic Toeplitz matrix and for the fourth moment of the 64-palindrome Toeplitz matrix, supporting our conjectures. Specifically, whenever one has involved combinatorial arguments such as the ones above, it is worthwhile to numerically test the theory. In Table 1 we present the data from simulating 1000 real symmetric doubly palindromic 2048 $\times$ 2048 Toeplitz matrices, and compare the even moments to our predicted values (as expected, the odd moments were small). Unfortunately the rate of convergence is slow in $N$ due to the presence of large big-Oh constants.

\begin{table}\label{tab:bigsim}
\caption{Conjectured and observed moments for 1000  real symmetric doubly palindromic 2048 $\times$ 2048 Toeplitz matrices. The conjectured values come from assuming Conjecture \ref{conj:configurationconjequality}.}
\begin{center}
\begin{tabular}{|r|r|r|r|}
\hline
Moment & Conjectured & Observed & Observed/Predicted\\
\hline \hline
  2 & 1.000 & 1.001 & 1.001 \\
  4 & 4.500 & 4.521 & 1.005 \\
  6 &37.500  & 37.887 & 1.010 \\
  8 &433.125  & 468.53\ \ \ & 1.082   \\
  10 & 6260.63\ \ \  & 107717.3\ \ \ \ \ & 17.206 \\
  \hline
\end{tabular}
\end{center}
\end{table}

It is worth noting how slow the convergence is. For example, when we considered 1000 real symmetric doubly palindromic $96 \times 96$ Toeplitz matrices, the observed second moment was 0.990765, the fourth moment was 4.75209, the sixth was 45.7965 (for a ratio of 1.22) and the eighth was 737.71 (for a ratio of 1.70).

While this method was quite useful and accurate for lower moments or for a small number of palindromes, for larger values of these quantities the big-Oh constants grew quite large, making it computationally prohibitive to simulate a representative sample of sufficiently large matrices, and thus leaving us with a rather poor estimate of the moments and providing no guide to whether or not our formulas were accurate.

To avoid simulating ever-larger matrices, we instead realized that the average $2m$\ts{th} moment of $N$ by $N$ matrices in our ensembles should satisfy
\be M_{2m,n;N}\ =\ M_{2m,n} + \frac{C_1}{N} + \frac{C_2}{N^2} + \cdots + \frac{C_m}{N^m}.\ee Thus, rather than simulating prohibitively large matrices, we could instead simulate large numbers (to increase the likelihood of a representative sample) of several sizes of smaller matrices and then perform a least squares analysis to estimate the value of $M_{2m,n}$. We present the data in Table 2.

\begin{table}\label{tab:obsmomentsvarsizes}
\begin{center}
\caption{Observed moments for the ensembles of $N\times N$ doubly palindromic (i.e., $n=2$) Toeplitz matrices. The conjectured values are obtained by assuming Conjecture \ref{conj:configurationconjequality}, which means taking the contribution from the adjacent matching case and multiplying by $(2m-1)!!$ for the $2m$\textsuperscript{th} moment.}
\begin{tabular}{|r|r|r|r|r|r|r|}
  \hline
N	&	sims	&	2nd	&	4th	&	6th &	8th 	&	10th	\\ \hline \hline
8	&	1,000,000	&	1.000	&	8.583	&	150.246	&	3984.36	&	141270.00	\\
12	&	1,000,000	&	1.000	&	7.178	&	110.847	&	2709.61	&	90816.60	\\
16	&	1,000,000	&	1.001	&	6.529	&	93.311	&	2195.78	&	73780.00	\\
20	&	1,000,000	&	1.001	&	6.090	&	80.892	&	1790.39	&	57062.50	\\
24	&	1,000,000	&	1.000	&	5.818	&	73.741	&	1577.42	&	49221.50	\\
28	&	1,000,000	&	1.000	&	5.621	&	68.040	&	1396.50	&	42619.90	\\
64	&	250,000	&	1.001	&	4.992	&	50.719	&	858.58	&	22012.90	\\
68	&	250,000	&	1.000	&	4.955	&	49.813	&	831.66	&	20949.60	\\
72	&	250,000	&	1.000	&	4.933	&	49.168	&	811.50	&	20221.20	\\
76	&	250,000	&	1.000	&	4.903	&	48.474	&	794.10	&	19924.10	\\
80	&	250,000	&	1.000	&	4.888	&	47.951	&	773.31	&	18817.00	\\
84	&	250,000	&	1.001	&	4.876	&	47.615	&	764.84	&	18548.00	\\
128	&	125,000	&	1.000	&	4.745	&	44.155	&	659.00	&	14570.60	\\
132	&	125,000	&	1.000	&	4.739	&	43.901	&	651.18	&	14325.30	\\
136	&	125,000	&	0.999	&	4.718	&	43.456	&	637.70	&	13788.10	\\
140	&	125,000	&	1.000	&	4.718	&	43.320	&	638.74	&	14440.40	\\
144	&	125,000	&	1.001	&	4.727	&	43.674	&	647.05	&	14221.80	\\
148	&	125,000	&	1.000	&	4.716	&	43.172	&	628.02	&	13648.10	\\
  \hline \hline
Conjectured & & 1.000 & 4.500	&	37.500	&	433.125	 & 6260.63 \\ \hline
Best Fit $M_{2m,2}$ & & 1.000 & 4.496 & 38.186 & 490.334 & 6120.94 \\ \hline
\end{tabular}
\end{center}
\end{table}

In performing the curve fitting, we frequently found big-Oh constants so large that it would have been impossible to sample a sufficiently large sample of matrices to get an accuracy of within a few percent for the moments. For example, for the fourth moment of a Toeplitz matrix with 64 palindromes we found the big-Oh constant to be above $30,000$, implying that averages of quite large matrices would give an approximation for the fourth moment that would be off by 10, compared to a true value of about 128.

For the doubly palindromic Toeplitz matrices in Table 2, we see that our best fit constants for $M_{2m,2}$ with $2m \le 10$ are quite close to the values predicted by Conjecture \ref{conj:configurationconjequality}, providing strong evidence supporting it.





\section{Conjectures for General Configurations}\label{sec:configurationconj}

In Conjecture \ref{conj:configurationconjequality} we hypothesize that the main term of the contribution of any configuration is equal in the limit. We provide some arguments in support of this.


To extend Lemma \ref{thm:4momVP} to the general even moment, we introduce some notation for a ``lift map'', which is a way of relating one configuration of an even moment (say one of the $(2m-1)!!$ configurations of the $2m$\textsuperscript{th} moment) to one configuration of the next higher even moment (to one of the $(2m+1)!!$ configurations of the $(2m+2)$\textsuperscript{nd} moment. If we add a pair of entries to a configuration, this moves us from our initial configuration to some configuration of the next even moment. There are only two ways to add these entries: adding a pair of adjacent entries or adding a pair of non-adjacent entries.

\begin{conj}[Configuration Lifting - Adjacent Case]\label{thm:lift1} Consider a configuration of the $2k\ts{th}$ moment. All configurations of the $(2k+2)\ts{nd}$ moment obtained by adding a pair of adjacent entries to this configuration contribute equally to the  $(2k+2)\ts{nd}$ moment.
\end{conj}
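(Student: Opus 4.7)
The plan is to show that inserting an adjacent pair at any position $p$ is a \emph{local} operation whose contribution to the $(2k+2)$nd moment factors as a universal quantity times the contribution from $\sigma$, where the universal factor depends only on the ensemble and not on $p$ or on $\sigma$.

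First I would set up notation. Given a configuration $\sigma$ realized on the cycle $i_1\to\cdots\to i_{2k}\to i_1$ with its $k$ pairings, and a position $p\in\{1,\ldots,2k\}$, the lift $\sigma_p$ inserts two new indices $j_1, j_2$ between $i_p$ and $i_{p+1}$, producing the cycle
\[
i_1\to\cdots\to i_p\to j_1\to j_2\to i_{p+1}\to\cdots\to i_{2k}\to i_1,
\]
and declares $\{a_{i_p j_1}, a_{j_1 j_2}\}$ to be the new adjacent matched pair. The remaining $k$ pairs are inherited from $\sigma$, with the entry $a_{i_p i_{p+1}}$ replaced by $a_{j_2 i_{p+1}}$ (and matched to the same partner as its predecessor).

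Next I would reduce the adjacent pair's equation using Lemma \ref{thm:neg}, obtaining
\[
i_p - j_1\ =\ -(j_1 - j_2) + C_0, \qquad\text{so}\qquad j_2\ =\ i_p - C_0,
\]
where $C_0$ ranges over the $O(2^n)$ admissible constants from \eqref{eq:match1}--\eqref{eq:match2}. The critical observation is that $j_1$ appears in no other equation of $\sigma_p$, and the inherited equations involve only $i_1,\ldots, i_p, j_2, i_{p+1},\ldots, i_{2k}$. After the substitution $j_2 = i_p - C_0$, these coincide with the equations of $\sigma$, except that the first-index occurrence of $i_p$ (in the entry originally $a_{i_p i_{p+1}}$) is shifted by $-C_0$, a translation that leaves the number of solutions unchanged up to boundary corrections.

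The main step would be to establish a factorization
\[
f_{\sigma_p}(N)\ =\ \Lambda(N)\cdot f_\sigma(N)\ +\ O(N^{k+1}),
\]
where $f_\sigma(N)$ and $f_{\sigma_p}(N)$ are the valid-tuple counts for the respective configurations and
\[
\Lambda(N)\ =\ \sum_{C_0}\#\{j_1\in\{1,\ldots,N\} : C_0 \text{ is admissible for } |i_p - j_1|\}.
\]
For an interior value of $i_p$, the quantity $\Lambda(N)$ is $\Theta(N)$ with a main term independent of $i_p$, hence independent of $p$. Combined with the near-translation-invariance of $f_\sigma$ in $i_p$ (valid up to an $O(N^k)$ loss of a degree of freedom), this yields the factorization; dividing by $N^{k+2}$ then shows that $\lim_{N\to\infty} f_{\sigma_p}(N)/N^{k+2}$ is the same for every $p$.

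The main obstacle is the careful handling of boundary effects. The admissible values of $C_0$ depend on $\lfloor|i_p - j_1|/2^n\rfloor$, so $C_0$ and $j_1$ do not fully decouple, and the shift $j_2 = i_p - C_0$ can push $j_2$ outside $\{1,\ldots,N\}$ when $i_p$ is near the edge. Each of these sources pins at most one index and so contributes an error of $O(N^{k+1})$, but verifying this uniformly across every configuration $\sigma$ and every position $p$ requires the region-counting arguments of Lemma \ref{thm:properties} and the geometric bookkeeping performed for the fourth moment in Section \ref{sec:adjacent}. The heart of the proof is to push that bookkeeping through in a configuration-agnostic manner.
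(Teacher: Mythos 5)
The paper does not prove this statement---it is a conjecture, and the paper offers heuristic support by constructing a ``lift map'' $F:\Omega_{2k}\to\Omega_{2k+2}$ that sends a valid tuple for $\sigma$ (having $C$-value $B$ on the pair being split) to a tuple for $\sigma_p$ with $C$-values $B'$ and $B-B'$ on the two resulting edges, keeping the inserted middle index ($o$ in the paper, your $j_1$) completely free. The paper then argues that the size of the image of $F$ is position-independent, and explicitly flags the remaining gap as \emph{surjectivity}: there may be valid tuples for $\sigma_p$ whose two $C$-values $C_0$ and $B'$ are individually admissible but whose sum $C_0+B'$ is not a valid $C$-value, so they cannot arise from any tuple of $\sigma$; the paper notes this is harmless for $2m\le 6$ but unresolved beyond.

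Your proposal attacks the same algebra (you also derive $j_2 = i_p - C_0$ with $j_1$ dropping out) but replaces the lift map with a proposed multiplicative factorization $f_{\sigma_p}(N) = \Lambda(N)\,f_\sigma(N) + O(N^{k+1})$. This is a genuinely different framing, and in one sense it is more direct: since you count tuples of $\sigma_p$ outright rather than mapping from $\sigma$, you sidestep the surjectivity issue the paper worries about. However, you inherit the same difficulty in a different guise, and I think you understate it. You describe the effect of the substitution $j_2 = i_p - C_0$ on the inherited equations as ``a translation that leaves the number of solutions unchanged up to boundary corrections.'' But that substitution changes the diagonal of the inherited edge from $d = i_{p+1}-i_p$ to $d' = d + C_0$, where $C_0$ is $\Theta(N)$, and the set of admissible $C$-values for the inherited pair depends on $d'$ through the arithmetic in equations \eqref{eq:match1}--\eqref{eq:match2} (the floor and the congruence class modulo $2^n$). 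This is a structural change to the system, not a boundary effect, and it is precisely the shadow of the paper's nonadditivity obstruction ($C_0 + B'$ possibly inadmissible). Your ``pins at most one index, hence $O(N^{k+1})$'' claim would need to be justified against this, and that justification is exactly the hard combinatorics that keeps the statement a conjecture in the paper. In short: your route is a legitimate alternative heuristic and has a cleaner logical shape, but it does not close the gap the paper leaves open, and the ``near-translation-invariance'' step is where the real difficulty is hiding.
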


\begin{figure}[htb]
\begin{center}
\scalebox{.8}{\includegraphics{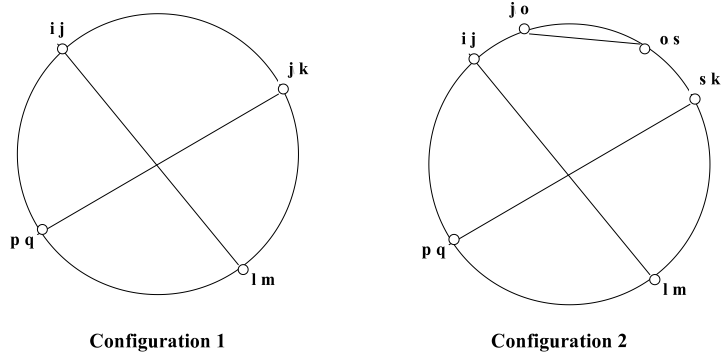}}
\caption{\label{fig:2} Moment Lifting by adding a pair of adjacent entries}
\end{center}\end{figure}

We comment on the above conjecture; see Figure \ref{fig:2} for an example.
Let $$(\dots, a_{p q}, \dots, a_{i j}, a_{j k }, \dots a_{lm}, \dots )$$ be a tuple from one of the $(2k-1)!!$ configurations of the $2k$\ts{th} moment; for brevity's sake we call this configuration (1). We let $$(\dots, a_{pq}, \dots, a_{ij}, a_{jo}, a_{os}, a_{sk }, \dots, a_{lm}, \dots)$$ be the new tuple obtained by adding the pair of adjacent, matched entries $a_{jo} = a_{os}$; we denote this by configuration (2). Let $\Omega_{2k}$ be the set of all tuples that work for configuration (1) and $\Omega_{2k+2}$ be the set of all tuples that work for  configuration (2). We define a ``lift map'' $F: \Omega_{2k} \to \Omega_{2k+2}$ by
\be F\left( (\dots, a_{p q}, \dots, a_{i j}, a_{j k }, \dots, a_{lm}, \dots )\right) \ = \ (\dots, a_{pq}, \dots, a_{ij}, a_{jo}, a_{os}, a_{sk }, \dots, a_{lm}, \dots ).\ee

Note $F$ maps each index in $ (\dots, a_{p q}, \dots, a_{i j}, a_{j k }, \dots, a_{lm}, \dots )$ to itself and inserts a new index $s = j - B + B^{'}$ where $B$ is the value of $C$ corresponding to the pair of entries ($a_{jk} = a_{pq}$), and $B^{'}$ is any value of $C$ such that $s \in \{1,\dots,N\}$ and $(B - B^{'})$ is a valid value of $C$. The  system of equations corresponding to the tuples $(\dots, a_{p q}, \dots, a_{i j}, a_{j k }, \dots, a_{lm}, \dots )$ is given as follows:
\be
\begin{cases}
l -m \ = \ - ( i - j ) + A \\
p-q \ = \ - ( j - k) + B \\
\dots \\
\dots. \\
\end{cases}
\ee

Under the map $F$, we obtain a new tuple $(\dots, a_{pq}, \dots, a_{ij}, a_{jo}, a_{os}, a_{sk }, \dots, a_{lm}, \dots )$ satisfying the system of equations
\be
\begin{cases}
l -m \ = \ - ( i - j ) + A \\
p-q \ = \ - ( s - k) + B^{'} \\
j - o \ = \ - (o - s) + (B - B^{'}) \\
\dots. \\
\end{cases}
\ee

Except for the two equations $p-q = - ( s - k) + B^{'}$  and $j - o = - (o - s) + (B - B^{'})$, every other equation of configuration (1) is preserved under $F$ and therefore still holds in configuration (2). Furthermore, since both $B^{'}$ and $(B - B^{'})$ are valid choices of the $C$ value by  the construction of $F$, the two equations $p-q = - ( s - k) + B^{'}$  and $j - o = - (o - s) + (B - B^{'})$ are also valid. Thus
\be\label{eq:newtup} (\dots, a_{pq}, \dots, a_{ij}, a_{jo}, a_{os}, a_{sk }, \dots, a_{lm}, \dots ) \in \Omega_{2k+2}. \ee

Based on the analysis above, the mapping $F$ only depends on the choice of one index $j$ and one $C$ value $B$ from the original configuration (1). First, we can take $B$ to be any possible value of our $C$ values since in order to obtain all the tuples of configuration (1), we need to sum over all possible combinations of the $C$'s that work for the system of equations corresponding to configuration (1).

Second, for any configuration in the $2k$\textsuperscript{th} moment, we have $2k$ indices (unknown variables) and $k$ equations with the last equation linearly dependent on the rest. Thus we must have at least two completely free indices that can take on any value between $1$ and $N$. We specify the two completely free indices by choosing the very first entry of the tuple, which obviously can be any vertex of the configuration. Starting with any vertex, if there are some $m$ satisfying tuples with $a_{ij}$ being our first entry, then there must be exactly $m$ tuples with $a_{j i}$ being our first entry since the matrix is symmetric over the main diagonal. Therefore, the number of resulting tuples from the map $F$ is the same regardless of where we add the adjacent pair.

The biggest obstacle left is whether the map $F$ can reach every possible tuple of configuration (2). Given a tuple $(\dots, a_{pq}, \dots, a_{ij}, a_{jo}, a_{os}, a_{sk }, \dots, a_{lm}, \dots )$ of the configuration (2), the inverse map $F^{-1}$ simply substitutes the equation arise from the added pair of adjacent entries ($a_{jo} = j_{os}$) $j = s + C_3$ into the equation arise from the entries ($a_{sk} = j_{pq}$) $p - q = - (s - k) + C_1$. In order for this tuple to be reachable by the map $F$, it suffices that $C_2 + C_3$ is a valid $C$ value. It is not a problem for the $4^{th}$ and $6^{th}$ moment since the sum of all the $C$ values corresponding to any tuple must equal 0. However, at higher even moment, there is possibility that $C_2 + C_3$ is not a valid $C$ value. So there might be tuples of the configuration (2) that the map $F$ can not reach. If those tuples only contribute to some lower order term then all configurations of the $(2k+2)\ts{nd}$ moment obtained by adding a pair of adjacent entries to a given configuration of the $(2k)\ts{th}$ moment would contribute equally.

\begin{conj}[Configuration Lifting - Nonadjacent Case]\label{thm:lift2} Consider a configuration of matchings for the $2k\ts{th}$ moment. All configurations at the $2k\ts{th}$ moment obtained by adding a pair of non-adjacent entries contribute equally to the  $(2k+2)$\textsuperscript{nd} moment.\end{conj}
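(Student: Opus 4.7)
The plan is to mirror the construction of the lift map $F$ used in Conjecture \ref{thm:lift1}, but with the inserted pair of matched edges placed at non-consecutive positions of the $(2k+2)$-cycle. Concretely, start from a tuple $(\dots, a_{ij}, a_{jk}, \dots, a_{lm}, a_{mn}, \dots) \in \Omega_{2k}$ in which $a_{jk}$ is matched with some $a_{pq}$ (constant $B$) and $a_{mn}$ is matched with some $a_{rt}$ (constant $E$). I would insert new indices $o$ and $s$ at the two chosen non-adjacent positions to obtain $(\dots, a_{ij}, a_{jo}, a_{ok}, \dots, a_{lm}, a_{ms}, a_{sn}, \dots)$. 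The matching in the lifted configuration keeps all old pairs (with $a_{ok}$ replacing $a_{jk}$ and $a_{sn}$ replacing $a_{mn}$, inheriting their partners), and introduces the new non-adjacent pair in which $a_{jo}$ is matched with $a_{ms}$. A lift map $G\colon \Omega_{2k} \to \Omega_{2k+2}$ is then determined by the choice of the new constant $B^\ast$ governing the new pair, together with one free degree of freedom for the inserted index (say $o$), with $s$ forced by the system.

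First I would verify that the defining equations of the image tuple form a legitimate system for the lifted configuration: every old equation survives under $G$ (with the indices along the two affected edges renamed accordingly), and the new equation relating $o$ and $s$ has the form $j - o = -(m - s) + C^{\ast}$ for a valid choice of $C^{\ast}$ in the sense of \eqref{eq:match1}--\eqref{eq:match2}. Next I would perform the degree-of-freedom count, which should mirror the adjacent case: the lifted configuration has $2k+2$ indices and $k+1$ equations with one linear dependence, hence $k+2$ degrees of freedom, one more than in $\Omega_{2k}$, matching the additional free parameter introduced by $o$ (equivalently $B^{\ast}$). Combined with the symmetry argument used for $F$ (starting the tuple at any allowed vertex produces the same count), this shows that the size of $G(\Omega_{2k})$ is independent of the positions chosen for the non-adjacent insertion, up to lower-order terms.

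The hard step, exactly as in the adjacent case, will be showing that $G$ is essentially surjective: the tuples of the $(2k+2)$nd-moment configuration not in the image of $G$ contribute only a lower-order term to the moment. The inverse $G^{-1}$ contracts the inserted vertices $o$ and $s$ back into the single edges $a_{jk}$ and $a_{mn}$, and is well defined provided the constants along the chain joining the two insertions combine into admissible $C$-values as in \eqref{eq:match1}--\eqref{eq:match2}; algebraically, writing $j - k = (j - o) + (o - k)$ and $m - n = (m - s) + (s - n)$ and substituting the three new equations forces a compatibility of the form $B^{\ast} + B' + E' = B + E$ up to sign choices. For non-adjacent insertions this chain is longer than in the adjacent case (the two inserted vertices are separated by other edges of the cycle), so there are in principle more ways for the combined constants to fall outside the admissible range. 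The core technical obstacle is to show that the set of ``bad'' target tuples, where this compatibility fails, has size at most $O_n(N^{k+1})$ and therefore contributes $O(1/N)$ to the normalized moment in \eqref{eq:coseqetl}, vanishing in the limit.

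If Conjectures \ref{thm:lift1} and \ref{thm:lift2} are both established, one would combine them with the trivial base case (there is a unique configuration for the second moment, and Lemma \ref{thm:4momVP} shows that adjacent and non-adjacent lifts of this configuration contribute equally to the fourth moment) and inductively deduce Conjecture \ref{conj:configurationconjequality}, provided one also extends the adjacent--versus--nonadjacent comparison of Lemma \ref{thm:4momVP} to higher even moments. I expect that an argument paralleling the one for Lemma \ref{thm:4momVP} (exploiting pairing complementary constants $C_1, C_2$ with $C_1 + C_2 = N-1$) will handle this last comparison, so the surjectivity step above is the real bottleneck.
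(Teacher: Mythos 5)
Your construction of the lift map $G$ mirrors the paper's $F'$ almost exactly (the paper inserts $o$ after $j$ and $s$ after $p$, keeps the old indices, replaces $a_{jk}\to a_{ok}$ and $a_{pq}\to a_{sq}$ with new constants $B',D'$, and adds the non-adjacent pair $a_{jo}=a_{ps}$ with constant $D'+B'-D-B$), so you have reproduced the paper's heuristic. You also correctly flag surjectivity as the main obstacle, which is exactly where the paper leaves the argument and hence why this remains a conjecture.

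However, there is a concrete gap in your degree-of-freedom discussion that the paper itself skirts but you state more strongly than is warranted. You write that the lifted configuration gains one degree of freedom ``matching the additional free parameter introduced by $o$ (equivalently $B^{\ast}$).'' This equivalence is what makes the \emph{adjacent} lift work but fails for the non-adjacent one. In the adjacent case the new constraint $j-o=-(o-s)+(B-B')$ collapses to $j=s+(B-B')$: the middle vertex $o$ cancels, so once the old indices and the discrete constant are fixed $o$ genuinely ranges over $O(N)$ values, supplying the extra degree of freedom. In the non-adjacent case the new constraint $j-o=-(p-s)+C^{\ast}$ couples $o$ and $s$, and the modified old constraints $j_0-k_0=-(o-k)+B'$ and $p_0-q_0=-(s-q)+D'$ then pin $o$ and $s$ down uniquely once the old tuple and the (finitely many, $O_n(1)$) constants $B',D'$ are chosen. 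So the lift map $G$ as you (and the paper) define it sends each tuple of $\Omega_{2k}$ to only $O_n(1)$ tuples of $\Omega_{2k+2}$, i.e.\ covers at most $O_n(N^{k+1})$ of the $\sim N^{k+2}$ tuples in the target. The missing factor of $N$ cannot be recovered by checking that the combined constants are ``admissible,'' because that is a bounded set of conditions; to salvage the parameterization one must let some of the pre-existing indices vary as $o$ does, at which point $G$ is no longer a lift of a fixed $\Omega_{2k}$-tuple and the whole bookkeeping has to be redone. Identifying and handling this deficiency — not merely the compatibility of $C$-values — is the real bottleneck in turning either your argument or the paper's into a proof.
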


\begin{figure}[htb]
\begin{center}
\scalebox{.8}{\includegraphics{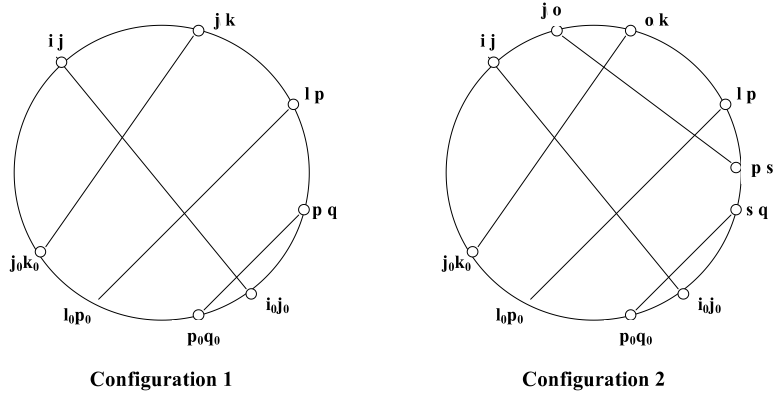}}
\caption{\label{fig:3} Moment Lifting by adding a pair of non-adjacent entries}
\end{center}\end{figure}

We provide some arguments in support of our claim; see Figure \ref{fig:3} for an example. Let $(\dots, a_{ij}, a_{jk}, \dots, a_{lp}, a_{pq }, \dots )$ be a tuple of a configuration for the $2k\ts{th}$ moment; denote this by configuration (1). We let $(\dots, a_{ij}, a_{jo}, a_{ok}, \dots, a_{lp}, a_{ps}, a_{sq}, \dots )$ be the new tuple obtained by adding the pair of entries $a_{jo} = a_{ps}$. As before, let $\Omega^{'}_{2k}$ be the set of all tuples that work for configuration (1) and $\Omega^{'}_{2k+2}$ be the set of all tuples that work for the configuration $(2)$.  We define the ``lift map'' $FF^{'}: \Omega^{'}_{2k} \to \Omega^{'}_{2k+2}$ in this case by
\be F^{'}\left((\dots, a_{ij}, a_{jk}, \dots, a_{lp}, a_{pq }, \dots )\right) \ = \ (\dots, a_{ij}, a_{jo}, a_{ok}, \dots a_{lp}, a_{ps}, a_{sq}, \dots )\ee
such that $F^{'}$ maps every index in $(\dots, a_{ij}, a_{jm}, \dots, a_{lp}, a_{pq }, \dots)$ to itself and adds two new indices $o = j + B - B^{'}$ and $s = p + D - D^{'}$ where $B$ and $D$ are the $C$ values associated with the pairs containing $a_{jm}$ and $a_{pq}$ respectively. Also, $B^{'}$ and $D^{'}$ are any value of $C$ such that $o,s \in \{1,\dots,N\}$ and $( D^{'} + B^{'} - D -  B)$ is a value of $C$.

For the tuple $(\dots, a_{ij}, a_{jm}, \dots, a_{lp}, a_{pq }, \dots )$ we have the following system of equations:
\be
\begin{cases}
i_0  - j_0 \ = \ - ( i - j ) + A \\
j_0  - k_0 \ = \ - (j - k) + B \\
l_0  - p_0 \ = \ - (l - p) + C \\
p_0  - q_0 \ = \ - (p - q) + D \\
\dots \\
\end{cases}
\ee

Under the map $F^{'}$, we obtain a new tuples $(\dots, a_{ij}, a_{jo}, a_{ok}, \dots, a_{lp}, a_{ps}, a_{sq}, \dots )$ satisfying the system of equations:
\be
\begin{cases}
i_0  - j_0 \ = \ - ( i - j ) + A \\
j_0  - k_0 \ = \ - (o - k) + B^{'} \\
l_0  - p_0 \ = \ - (l - p) + C \\
p_0  - q_0 \ = \ - (s - q) + D^{'} \\
j-o \ = \ - (p -s) + ( D^{'} + B^{'} - D -  B)\\
\dots \\
\end{cases}
\ee

Similar to the analysis in Conjecture \ref{thm:lift1}, all equations except for $j_0  - k_0 = - (o - k) + B^{'}$ and $p_0  - q_0 = - (s - q) + D^{'} $ and $j-o = - (p -s) + ( D^{'} + B^{'} - D -  B)$ are preserved under the map $F^{'}$ so they still hold. Furthermore, since $B^{'}$, $D^{'}$ and  $(D^{'} + B^{'} - D -  B)$ are all valid choices of $C$, the other three equations also hold true. Lastly, the existence of at least two completely free indices allow us to choose them to be the first index in $a_{jm}$ and the second index in $a_{lp}$. Thus, following the same line of argument in Conjecture \ref{thm:lift1}, we can always choose the two free indices such that the number of tuples resulting from the map $F^{'} (\Omega^{'}_{2k})$ are the same regardless of where we add the non-adjacent pair.

\begin{cor}\label{thm:replacement} Given any configuration, we can replace one of its adjacent pairs by another adjacent pair, and similarly for non-adjacent pairs, without changing its contribution to the corresponding moment.\end{cor}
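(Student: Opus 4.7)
The plan is to derive the corollary directly from Conjectures~\ref{thm:lift1} and~\ref{thm:lift2}, essentially by composing a lift map with the inverse of a lift map. Fix a configuration $\mathcal{C}$ of the $2k$\textsuperscript{th} moment and single out one of its adjacent pairs. The first step is to invert the lift map $F$ of Conjecture~\ref{thm:lift1}: stripping off the designated pair produces a smaller configuration $\mathcal{C}^{\ast}$ of the $(2k-2)$\textsuperscript{nd} moment, together with the record of where the pair sat. The second step is to re-apply $F$ to $\mathcal{C}^{\ast}$, but this time inserting the adjacent pair at a different location, obtaining a new configuration $\mathcal{C}'$ of the $2k$\textsuperscript{th} moment.

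By Conjecture~\ref{thm:lift1}, every configuration obtained by adjacently lifting $\mathcal{C}^{\ast}$ contributes the same main term to the $2k$\textsuperscript{th} moment. Since both $\mathcal{C}$ and $\mathcal{C}'$ lie in the image of $F$ applied to $\mathcal{C}^{\ast}$, their main-term contributions coincide, which is precisely the replacement statement for adjacent pairs. The non-adjacent case is argued identically: strip off the distinguished non-adjacent pair using $(F')^{-1}$, where $F'$ is the lift map of Conjecture~\ref{thm:lift2}, and re-lift at a different pair of positions. Conjecture~\ref{thm:lift2} then equates the two contributions.

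The main technical obstacle I expect is inherited from the two underlying conjectures rather than being new to the corollary: the inversion step is not automatically well defined on every tuple counted by $\mathcal{C}$, because the composite $C$-value forced by splitting the distinguished pair (the quantity $B-B'$ in the adjacent setting, and $D'+B'-D-B$ in the non-adjacent setting, in the notation of the two conjectures) must itself be a legitimate $C$-value in the sense of \eqref{eq:match1} or \eqref{eq:match2}. The tuples for which this closure fails must be shown to contribute only lower-order terms, so that they do not disturb the equality of main-term contributions between $\mathcal{C}$ and $\mathcal{C}'$. Once that bookkeeping is in hand (the same bookkeeping already needed for Conjectures~\ref{thm:lift1} and~\ref{thm:lift2}), the corollary follows with no additional combinatorial work.
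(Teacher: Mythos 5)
Your proposal is correct and matches the paper's (implicit) argument: the corollary is intended to follow immediately from Conjectures~\ref{thm:lift1} and~\ref{thm:lift2} by observing that the original configuration and the one obtained after replacement are both lifts of the same $(2k-2)$-configuration, hence contribute equally. You also correctly note that the $C$-value closure issue is a difficulty already present in the conjectures themselves rather than a new gap introduced by the corollary.
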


Given any configuration at the $(2k+2)$\textsuperscript{nd} moment, Corollary \ref{thm:replacement} allows us to repeatedly replace adjacent pairs with other adjacent pairs, and similarly for non-adjacent pairs. By iterating this process, we can move any configuration down to the completely adjacent configuration (which contains only adjacent pairs of entries); see for instance Figure \ref{fig:nonadj}. Given any configuration, whenever there is a crossing between two pairs of entries, which implies those entries are non-adjacent, we can eliminate the crossing by replacing one of these pairs of entries with another pair of non-adjacent entries.

\begin{figure}[h]
\begin{center}
\scalebox{.8}{\includegraphics{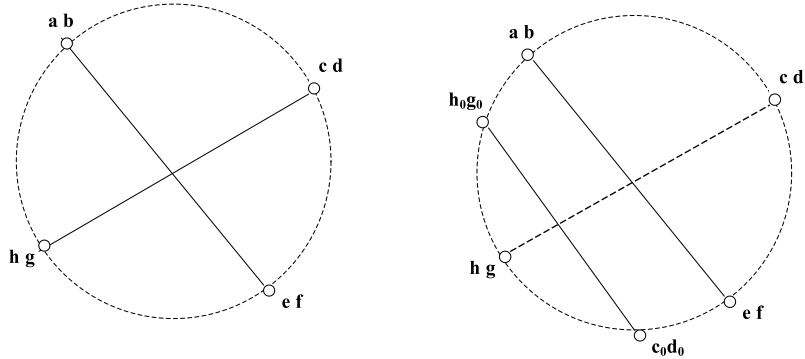}}
\caption{\label{fig:nonadj} Eliminating crossings in a given configuration}
\end{center}\end{figure}

Iterating this process, we end up with a configuration that has no crossing between any of its pairs of entries. We then keep replacing the pair of adjacent entries at the two ends of the configuration with another pair of adjacent entries in the middle of the configuration until we obtain the completely adjacent configuration; see Figure \ref{fig:adj}.

\begin{figure}[h]
\begin{center}
\scalebox{.8}{\includegraphics{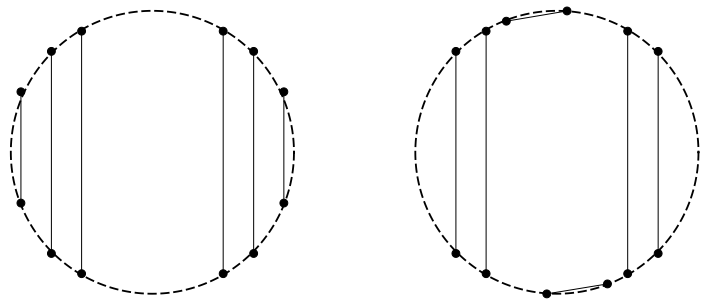}}
\caption{\label{fig:adj} Transformation to completely adjacent configuration}
\end{center}\end{figure}

Based on Corollary \ref{thm:replacement} and Conjecture \ref{thm:4momVP}, it follows by induction that every configuration at any even moment would contribute equally.


\ \\
\end{document}